\numberwithin{equation}{section}
\newtheorem{theorem}{Theorem}[section]
\newtheorem{proposition}[theorem]{Proposition}
\newtheorem{lemma}[theorem]{Lemma}
\newtheorem{conjecture}[theorem]{Conjecture}
\theoremstyle{definition}
\newtheorem{definition}[theorem]{Definition}
\theoremstyle{remark}
\newcommand{\R}{\mathbb{R}}
\newcommand{\N}{\mathbb{N}}
\newcommand{\1}{\mathbbm{1}}
\renewcommand{\hat}{\widehat}
\newcommand{\eps}{\varepsilon}
\newcommand{\scriptD}{\mathcal{D}}
\newcommand{\scriptE}{\mathcal{E}}
\newcommand{\scriptF}{\mathcal{F}}
\newcommand{\scriptH}{\mathcal{H}}
\newcommand{\scriptK}{\mathcal{K}}
\newcommand{\scriptR}{\mathcal{R}}
\newcommand{\scriptV}{\mathcal{V}}
\newcommand{\qtq}[1]{\:\text{#1}\:}
\DeclareMathOperator*{\supp}{supp}
\DeclareMathOperator*{\dist}{dist}
\begin{document}
\title{Fourier restriction above rectangles}
\author{Jeremy Schwend}
\address{Department of Mathematics, University of Wisconsin, Madison, WI 53706}
\email{jschwend@wisc.edu}
\author{Betsy Stovall}
\address{Department of Mathematics, University of Wisconsin, Madison, WI 53706}
\email{stovall@math.wisc.edu}
\begin{abstract}
In this article, we study the problem of obtaining Lebesgue space inequalities for the Fourier restriction operator associated to rectangular pieces of the paraboloid and perturbations thereof.  We state a conjecture for the dependence of the operator norms in these inequalities on the sidelengths of the rectangles, prove that this conjecture follows from (a slight reformulation of the) restriction conjecture for elliptic hypersurfaces, and prove that, if valid, the conjecture is essentially sharp.  Such questions arise naturally in the study of restriction inequalities for degenerate hypersurfaces; we demonstrate this connection by using our positive results to prove new restriction inequalities for a class of hypersurfaces having some additive structure.  
\end{abstract}

\maketitle

\section{Introduction} 

%Let $\scriptR_d$ denote the restriction operator associated to the paraboloid 
%$$
%\{(|\xi|^2,\xi):\xi \in \R^d\}.
%$$
Recent work \cite{BMVadd} establishing bounds for restriction operators associated to higher order surfaces on which the curvature may vanish at some points naturally gives rise to the study of the restriction operator $\scriptR_d^\ell$ associated to the rectangular piece of the paraboloid,
$$
\{(|\xi|^2,\xi):\xi \in Q^\ell\}, \qquad Q^\ell:= \prod_{j=1}^d (-l_j,l_j), \qquad \ell = (l_1,\ldots,l_d) \in (0,\infty]^d,
$$
and perturbations thereof.  

In this article, we consider the problems of establishing finiteness and understanding the dependence on $\ell$ of the $L^p \to L^q$ operator norms of $\scriptR_d^\ell$.  We then apply such results to obtain new, sharp restriction inequalities for a collection of ``degenerate'' hypersurfaces (i.e.\ hypersurfaces whose curvature vanishes on some nonempty set).  We are  motivated by the recent success of the first author \cite{Schwend} (cf.\ \cite{DZ}) in directly deducing sharp estimates for model convolution operators by using a generalization of this approach.    

The natural interpretation of ellipticity in this context leads to a slight generalization of the traditional notion of ellipticity formulated by Tao--Vargas--Vega \cite{TVV}.  We introduce some additional notation, letting 
$$
A^\ell(\xi_1,\ldots,\xi_d):=(l_1\xi_1,\ldots,l_d\xi_d),  \qquad \ell \in (0,\infty)^d,
$$
and 
$$
\text{$\1$}:=(1,\ldots,1).   
$$

\begin{definition} \label{D:elliptic}
Let $\ell \in (0,\infty)^d$ and let $g$ be a $C^{N+2}_{\rm{loc}}$ function on $Q^\ell$ for some $\ell \in (0,\infty]^d$, with $N\geq 0$ sufficiently large, possibly infinite.  Assume that $D^2 g$ is positive definite throughout $Q^\ell$, and let $0 < \eps_0 \leq \tfrac12$.  We say that $g$ is elliptic over $Q^\ell$ (with parameters $N,\eps_0$) if $g(\xi) = |\xi|^2+h(\xi)$, where the perturbation $h$ satisfies $h(0) = 0$, $\nabla h(0) = 0$, $D^2h(0) = 0$, and
$$
\|(D^2 h) \circ A^{\tilde\ell}\|_{C^N(Q^\1)} < \eps_0,
$$
for every bounded $Q^{\tilde\ell}$ contained in $Q^\ell$.
If $g$ is elliptic over $Q^\ell$ with parameters $N,\eps_0$, we will also say that the surface 
$$
\Sigma_g:=\{(g(\xi),\xi):\xi \in Q^\ell\}
$$
is elliptic over $Q^\ell$ (with parameters $N,\eps_0$).  
\end{definition}

This definition of ellipticity is invariant under parabolic rescalings in the sense that $g$ is elliptic over $Q^\ell$ if and only if $\lambda^2g(\lambda^{-1}\cdot)$ is elliptic over $Q^{\lambda \ell}$.  In the special cases that $d=1$ or $\ell = \1 = (1,\ldots,1)$, our definition of ellipticity coincides with that in \cite{TVV}, but ours is strictly more general in the sense that a surface elliptic over some $Q^\ell$ may not be coverable by a bounded (independent of $\ell$) number of surfaces elliptic in the sense of \cite{TVV}.  We will see the utility of this generalization once we turn to applications.  

Associated to a function $g$ elliptic over some $Q^\ell$ are the familiar restriction and extension operators,
\begin{align*}
\scriptR^\ell_g f(\xi) &:= \hat f(g(\xi),\xi), \qquad \xi \in Q^\ell\\
\scriptE^\ell_g f(t,x) &:= \int_{Q^\ell} e^{i(t,x)(g(\xi),\xi)} f(\xi)\, d\xi, \qquad (t,x) \in \R^{1+d}.
\end{align*}
Since these operators are dual to one another, it suffices to state our results for the extension operator.  

Though ellipticity over some $Q^\ell$ is a more general concept than ellipticity in the sense of Tao--Vargas--Vega, the evidence toward the following conjecture seems as strong as that for the corresponding conjecture for elliptic hypersurfaces.  

\begin{conjecture} \label{C:elliptic conj}
For $N$ sufficiently large, $\eps_0>0$ sufficiently large, and $1 \leq p,q \leq \infty$ in the range $q=\frac{d+2}d p' > p$, there exists a constant $C_{p,q,d} < \infty$ such that for any $\ell \in (0,\infty]^d$, $\|\scriptE^\ell_g\|_{L^p \to L^q} \leq C_{p,q,d}$, for any $g$ elliptic over $Q^\ell$ with parameters $N,\eps_0$.  
\end{conjecture}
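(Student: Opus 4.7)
The strategy is to deduce Conjecture~\ref{C:elliptic conj} from the restriction conjecture for Tao--Vargas--Vega (TVV) elliptic hypersurfaces by decomposing $Q^\ell$ into unit caps and combining per-cap extension estimates via almost-orthogonality. Parabolic rescaling $g\mapsto \lambda^2 g(\lambda^{-1}\cdot)$ is $L^p\to L^q$-isometric on the scaling line $q=\tfrac{d+2}{d}p'$ and, by the remark after Definition~\ref{D:elliptic}, interchanges ellipticity over $Q^\ell$ with ellipticity over $Q^{\lambda\ell}$, so we may assume $\min_j l_j \geq 1$. Tile $Q^\ell$ by translates $Q_\alpha = \xi_\alpha + Q^\1$ and write
$$
\scriptE_g^\ell f = \sum_\alpha F_\alpha, \qquad F_\alpha := \scriptE_g^{Q_\alpha}(f\1_{Q_\alpha}).
$$

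On each $Q_\alpha$, Taylor expand $g$ about $\xi_\alpha$ to separate an affine part (absorbable by an $L^q$-isometric translation in $(t,x)$) from a remainder whose Hessian at $0$ is $D^2g(\xi_\alpha)$. The ellipticity hypothesis $\|D^2 h\|_{C^N(Q^\ell)}<\eps_0$ forces $D^2 g(\xi_\alpha)=2I+O(\eps_0)$ and likewise controls higher derivatives uniformly in $\alpha$; a bounded linear change of variables in $\xi$ normalizing the quadratic part to $|\cdot|^2$ then exhibits each per-cap function as TVV-elliptic over $Q^\1$ with parameters independent of $\alpha,\ell$. The assumed TVV restriction conjecture yields
$$
\|F_\alpha\|_{L^q(\R^{1+d})} \leq C_{p,q,d} \|f\|_{L^p(Q_\alpha)}
$$
uniformly in $\alpha$.

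The $x$-Fourier transform of $F_\alpha(t,\cdot)$ is supported in the unit cube $Q_\alpha$, so the $F_\alpha$ have pairwise disjoint $x$-frequency supports in a unit-cube tiling of $\R^d$. Applying, in the $x$-variable slicewise in $t$, the Littlewood--Paley inequality for this tiling (valid for $2\leq q<\infty$, which holds throughout the scaling line since one checks $q\geq 2$ whenever $p\in[1,2+2/d)$) and Minkowski's inequality in $L^{q/2}$:
$$
\Big\|\sum_\alpha F_\alpha\Big\|_{L^q(\R^{1+d})} \lesssim_{q,d} \Big\|\Big(\sum_\alpha |F_\alpha|^2\Big)^{1/2}\Big\|_{L^q} \leq \Big(\sum_\alpha \|F_\alpha\|_{L^q}^2\Big)^{1/2} \leq C_{p,q,d}\Big(\sum_\alpha \|f\|_{L^p(Q_\alpha)}^2\Big)^{1/2}.
$$
When $p\leq 2$, the embedding $\ell^p\hookrightarrow\ell^2$ on the sequence $\{\|f\|_{L^p(Q_\alpha)}\}_\alpha$ reduces the right-hand side to $C_{p,q,d}\|f\|_{L^p(Q^\ell)}$, uniformly in $\ell$, closing the argument in this regime (and in particular recovering the $\ell$-uniform $L^2$ Stein--Tomas line).

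The main obstacle is the range $p>2$, which is nonempty when $d\geq 2$ since the scaling line admits $p<2+2/d$. There $\ell^p\not\hookrightarrow\ell^2$, and the above $\ell^2$-sum over an unbounded number of unit caps is too lossy. To close this case, I would replace the unit-cube decomposition with a parabolic-cap decomposition at the scale matched to $p$, and use a spatially localized form of the TVV restriction estimate (obtained via wave-packet decomposition or a bilinear-to-linear reduction in the spirit of the arguments underlying the TVV conjecture itself) so that the per-cap bounds can be summed in $\ell^p$ rather than $\ell^2$. Since all ingredients (the local TVV bound, the Littlewood--Paley constant, and the summation step) have constants independent of $\ell$, the global constant $C_{p,q,d}$ is independent of $\ell$ throughout.
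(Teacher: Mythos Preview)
First, note that Conjecture~\ref{C:elliptic conj} is presented in the paper as an open conjecture; the paper proves only the partial result in Theorem~\ref{T:elliptic thm} (the bilinear range $q>\tfrac{2(d+3)}{d+1}$), via Candy's bilinear estimate and the Tao--Vargas--Vega bilinear-to-linear machinery. Your proposal attempts something the paper explicitly does not claim: a black-box deduction of the full conjecture from the TVV restriction conjecture. The paper in fact remarks that a surface elliptic over $Q^\ell$ ``may not be coverable by a bounded (independent of $\ell$) number of surfaces elliptic in the sense of~[TVV],'' and that one expects the \emph{methods} behind TVV-type results to generalize, not the results themselves as inputs.

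Your argument has a genuine error at the Littlewood--Paley step, already in the regime $p\le 2$. The inequality
\[
\Bigl\|\sum_\alpha F_\alpha\Bigr\|_{L^q_x} \lesssim \Bigl\|\Bigl(\sum_\alpha |F_\alpha|^2\Bigr)^{1/2}\Bigr\|_{L^q_x}
\]
for functions with $x$-Fourier supports in disjoint unit cubes is the \emph{reverse} lattice square-function inequality, which holds only for $q\le 2$ (it is the dual of the Rubio de Francia inequality), not for $q\ge 2$ as you claim. A concrete counterexample in one dimension: take $F_\alpha(x)=e^{i\alpha x}\phi(x)$ for $\alpha=1,\dots,N$ with $\hat\phi$ supported in $[-\tfrac14,\tfrac14]$. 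Then $\|\sum_\alpha F_\alpha\|_q\sim N^{1-1/q}$ while $\|(\sum|F_\alpha|^2)^{1/2}\|_q=N^{1/2}\|\phi\|_q$, so the inequality fails whenever $q>2$. The entire scaling line $q=\tfrac{d+2}{d}p'>p$ satisfies $q>2$, so the step fails everywhere. The correct almost-orthogonality for congruent disjoint cubes gives only an $\ell^{q'}$ sum, $\|\sum F_\alpha\|_q\lesssim(\sum\|F_\alpha\|_q^{q'})^{1/q'}$, and since $q'=\tfrac{(d+2)p'}{(d+2)p'-d}<p$ everywhere on the scaling line, the embedding $\ell^p\hookrightarrow\ell^{q'}$ also fails, and the argument does not close for any $p$.

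For $p>2$ you acknowledge the difficulty and offer only a heuristic (``parabolic-cap decomposition at the scale matched to $p$,'' ``spatially localized form of the TVV restriction estimate''); this is not a proof, and indeed a proof along these lines would essentially amount to re-running the TVV induction-on-scales argument in the rectangle-elliptic setting---precisely what the paper suggests is needed but does not carry out in full generality.
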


This conjecture is already verified in the case $d=1$ by Fefferman--Stein \cite{Fefferman} and Zygmund \cite{Zygmund}.  To the authors' read, it seems likely that all recent results, including \cite{GuthI, GuthII, HickmanRogers, Wang}, establishing bounds for hypersurfaces elliptic in the sense of \cite{TVV} would generalize to imply progress toward Conjecture~\ref{C:elliptic conj}, but here we claim results only in the bilinear range, wherein they are straight forward to deduce from results already in the literature \cite{Candy}.  (We will detail the deduction from the results of \cite{Candy} in Section~\ref{S:pos}.)

\begin{theorem}[\cite{Candy, Fefferman, TaoParab, TVV, Zygmund}]\label{T:elliptic thm}
Conjecture~\ref{C:elliptic conj} holds for all $d \geq 1$ and $q > \tfrac{2(d+3)}{d+1}$.  
\end{theorem}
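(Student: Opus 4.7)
The range $q > \tfrac{2(d+3)}{d+1}$ is precisely the off-endpoint bilinear restriction range, so the strategy is to deduce the linear bound from a bilinear restriction estimate via the classical Tao--Vargas--Vega reduction, with the bilinear input supplied by Candy~\cite{Candy}.

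First, I would decompose $Q^\ell$ into a Whitney-type family $\{\tau\}$ of axis-parallel subrectangles chosen so that, after pullback by $A^{\tilde\ell}$ with $\tilde\ell$ the side-length vector of $\tau$, each $\tau$ is normalized to the unit cube $Q^\1$. By the scaling invariance of Definition~\ref{D:elliptic} noted immediately after its statement, the corresponding rescaled pieces of $g$ become perturbations of $|\xi|^2$ on $Q^\1$ with $C^N$-small second derivatives, and hence are elliptic in the Tao--Vargas--Vega sense with uniform parameters. Split $\scriptE^\ell_g f = \sum_\tau \scriptE^\tau_g f_\tau$ accordingly, and decompose $Q^\ell \times Q^\ell$ into near-diagonal pairs and off-diagonal Whitney pairs of comparable anisotropic scale separated by a comparable distance.

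Second, for each off-diagonal pair $(\tau,\tau')$, I would rescale and invoke Candy's bilinear extension theorem. The transversality of the normals to the rescaled surfaces (arising from the Whitney separation combined with the near-identity Hessian) and the uniform curvature bounds (arising from ellipticity) furnish Candy's hypotheses with constants depending only on $d$; this yields
$$
\|\scriptE^\tau_g f_\tau \cdot \scriptE^{\tau'}_g f_{\tau'}\|_{L^{q/2}} \lesssim \|f_\tau\|_{L^2}\|f_{\tau'}\|_{L^2}
$$
for any $q > \tfrac{2(d+3)}{d+1}$. Summing these bilinear estimates over all Whitney pairs and interpolating against the trivial $L^1\to L^\infty$ bound then recovers the linear $L^p \to L^q$ estimate; because $q$ sits strictly above the bilinear endpoint, the Schur-type summation of the Whitney contributions converges geometrically, with no $\eps$-loss to absorb.

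The main obstacle is uniformity: Candy's constant and the summation constant must be independent of $\ell$ and of the anisotropy of $Q^\ell$. This is precisely the payoff of formulating ellipticity in the rescaled form of Definition~\ref{D:elliptic} --- every rescaled bilinear subproblem lives on the unit cube with a perturbation controlled by $\eps_0$, so it reduces essentially to the standard bilinear restriction problem for perturbations of the paraboloid, for which Candy's estimates apply uniformly. The residual work is bookkeeping: arranging the anisotropic Whitney decomposition so that the transversality thresholds and pairwise multiplicity counts are uniform across all scales, and tracking the Tao--Vargas--Vega interpolation carefully to ensure no spurious $\ell$-dependence creeps into the final constant.
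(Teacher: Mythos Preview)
Your overall strategy---bilinear restriction via Candy together with the Tao--Vargas--Vega bilinear-to-linear reduction---is exactly what the paper does. However, one step of your execution would fail as written, and the fix is a simplification rather than a complication.

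The scaling invariance noted after Definition~\ref{D:elliptic} is only for \emph{isotropic} parabolic rescalings $g \mapsto \lambda^2 g(\lambda^{-1}\cdot)$; it does not assert invariance under the anisotropic dilations $A^{\tilde\ell}$. Under $\xi \mapsto A^{\tilde\ell}\xi$ with nonconstant $\tilde\ell$, the quadratic part $|\xi|^2$ becomes $\sum_j \tilde l_j^2 \xi_j^2$, whose eigenvalue ratio can be as large as $(\tilde l_d/\tilde l_1)^2$, so the rescaled surface is \emph{not} a $C^N$-small perturbation of the unit paraboloid and Candy's theorem does not apply with uniform constants. Your proposed anisotropic Whitney pieces therefore do not reduce to the standard bilinear subproblem on $Q^\1$, and the ``payoff'' you identify is not actually available.

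The paper sidesteps this entirely. It applies Candy's theorem directly to pairs of unit balls $B_1,B_2 \subseteq Q^\ell$ at unit separation (Lemma~\ref{L:Candy}), then uses the \emph{isotropic} parabolic rescaling to obtain the same estimate for balls of radius $r$ separated by $\sim r$, and finally runs the standard TVV Whitney argument with cubes. The anisotropy of $Q^\ell$ is a red herring: the ellipticity hypothesis already guarantees that on any ball inside $Q^\ell$ the function is a small perturbation of a paraboloid, so Candy's hypotheses are met uniformly without ever invoking $A^{\tilde\ell}$. No anisotropic bookkeeping is needed, and the $\ell$-independence of the constant comes for free.
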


As promised, we turn now to the dependence of operator norms on the sidelengths.  

\begin{conjecture} \label{C:rect conj}
Let $\ell \in (0,\infty]^d$ satisfy $l_1 \leq \cdots \leq l_d$.  For $g$ elliptic over $Q^\ell$ with parameters $N$ sufficiently large and $\eps_0>0$ sufficiently small, depending on $d,p,q$, we have the following operator norm bounds for $\scriptE^l_g$, with implicit constants independent of $g$ and $\ell$.  If $q>p$ satisfy $q=\tfrac{d-j-\theta+2}{d-j-\theta} p'$, for some $0 \leq j < d$ and $0 \leq \theta \leq 1$, then
\begin{equation} \label{E:q > p}
\|\scriptE^\ell_g\|_{L^p \to L^q} \lesssim (l_1 \cdots l_j l_{j+1}^\theta)^{\frac1{p'}-\frac1q};
\end{equation}
in particular, this quantity is finite whenever $l_{j+1} < \infty$.  If $l_d < \infty$, we have, in addition:  
\begin{align} \label{E:q leq p d>1}
\|\scriptE^\ell_g\|_{L^p \to L^q} \lesssim_{\eps} (l_1\cdots l_d)^{\frac1q-\frac1p}(\tfrac{l_d}{l_{j+1}})^\eps(l_1\cdots l_jl_{j+1}^\theta)^{1-\frac2q}, 
\end{align}
for $q = \tfrac{2(d-j-\theta+1)}{d-j-\theta} \leq p$, $0 < \theta \leq 1$, $\eps > 0$, and $j=0,\ldots,d-2$; and
\begin{align} \label{E:q > 4}
\|\scriptE^\ell_g\|_{L^p \to L^q} \lesssim (l_1 \cdots l_{d-1})^{\frac1{p'}-\frac1q}l_d^{1-\frac 3q-\frac1p},
\end{align}
for $q > 4$ and $p \geq (\tfrac q3)'$.  
\end{conjecture}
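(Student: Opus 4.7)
The plan is to deduce each estimate in Conjecture~\ref{C:rect conj} from the $(d-j)$-dimensional case of Conjecture~\ref{C:elliptic conj}, or (for \eqref{E:q > 4}) the classical one-dimensional Fefferman--Stein/Zygmund theorem, through a Fubini reduction combined with complex interpolation. Writing $\xi=(\xi',\xi'')$ with $\xi'=(\xi_1,\ldots,\xi_j)\in Q^{\ell'}$ and $\xi''\in Q^{\ell''}$, and analogously $x=(x',x'')$, a Taylor expansion at $\xi''=0$ yields $g(\xi',\xi'')=g(\xi',0)+b(\xi')\cdot\xi''+\psi(\xi',\xi'')$, where $\psi(\xi',\cdot)$ is, uniformly in $\xi'\in Q^{\ell'}$ and up to a small linear change of variables in $\xi''$, elliptic over $Q^{\ell''}$ with parameters inherited from those of $g$ (since $D^2_{\xi''}\psi$ is a block of the small matrix $D^2h$). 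This produces the representation
\[
\scriptE^\ell_g f(t,x)=\int_{Q^{\ell'}} e^{i(tg(\xi',0)+x'\cdot\xi')}\,\scriptE^{\ell''}_{\psi(\xi',\cdot)}f(\xi',\cdot)\bigl(t,\,x''+tb(\xi')\bigr)d\xi'.
\]

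For the corner case $\theta=0$ of \eqref{E:q > p}, I would, for fixed $(t,x'')$, treat the integral as a Fourier transform in $x'$ of the $\xi'$-indexed family of inner extensions, and apply in sequence: Hausdorff--Young in $x'$ (valid because $q\ge 2$ throughout the range), Minkowski's inequality to exchange $L^{q'}_{\xi'}$ with $L^q_{t,x''}$ (again $q\ge 2$), the $(d-j)$-dimensional restriction bound on each slice at the elliptic exponent $q=\tfrac{d-j+2}{d-j}p'$, and Hölder's inequality in the bounded domain $Q^{\ell'}$ (valid since $1/p+1/q\le 1$ is equivalent to $p\ge 1$ along this line). Since $1/q'-1/p=1/p'-1/q$, the resulting weight is precisely $(l_1\cdots l_j)^{1/p'-1/q}$. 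Complex interpolation between adjacent corners $(j,0)$ and $(j+1,0)$, performed at fixed $p$ with interpolation parameter $s=\tfrac{\theta(d-j+1)}{d-j-\theta+2}$, then recovers \eqref{E:q > p} for all $\theta\in(0,1)$, the exponent of $l_{j+1}$ interpolating correctly to $\theta(1/p'-1/q)$.

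The bound \eqref{E:q > 4} is proved by the same template with $j=d-1$, but replacing the multi-dimensional elliptic conjecture on slices by the one-dimensional Fefferman--Stein/Zygmund estimate $\|\scriptE^{\ell_d}_{g(\xi',\cdot)}h\|_{L^q}\lesssim l_d^{1-3/q-1/p}\|h\|_{L^p}$ for $q>4$ and $p\ge(q/3)'$; this produces the $l_d^{1-3/q-1/p}$ factor, while Hölder in the bounded $\xi'$-domain contributes $(l_1\cdots l_{d-1})^{1/p'-1/q}$.

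The principal obstacle is \eqref{E:q leq p d>1}, which lies in the $q\le p$ range and carries the telltale $(l_d/l_{j+1})^\eps$-loss of a Stein--Tomas-type argument. Here the plan is to replace the lower-dimensional elliptic slice bound with its Stein--Tomas endpoint at $q_{ST}=\tfrac{2(d-j-\theta+1)}{d-j-\theta}$, obtained via a $TT^*$ argument and anisotropic stationary-phase estimates on $\hat{d\sigma}$ restricted to $\Sigma_g\cap Q^{\ell''}$, and then to combine with a Bourgain-type $\eps$-removal in the thick directions to pass from $q_{ST}$ to general $q\le p$. The technical difficulty lies in the anisotropic bookkeeping: the geometric weight $(l_1\cdots l_j l_{j+1}^\theta)^{1-2/q}$ must emerge from the thin directions, the Hölder weight $(l_1\cdots l_d)^{1/q-1/p}$ from the bounded support, and the $\eps$-loss must be absorbed via dyadic decomposition in $\xi_{j+2},\ldots,\xi_d$, with each step sharp in its own regime.
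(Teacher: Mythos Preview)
Your treatment of \eqref{E:q > p} and \eqref{E:q > 4} is essentially the paper's own argument. The Fubini/Hausdorff--Young/Minkowski/slice-bound/H\"older chain you describe for the corners $\theta=0$ is exactly Lemma~\ref{L:DG} (attributed to Drury--Guo), and the passage to intermediate $\theta$ by interpolation between adjacent corners is Lemma~\ref{L:interstices}. For \eqref{E:q > 4} the paper likewise uses the $j=d-1$ case of Lemma~\ref{L:DG} (i.e.\ the one-dimensional theorem on slices at $q=3p'$) followed by H\"older; your version simply applies the H\"older step in one dimension before, rather than after, the slicing, which is immaterial.

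The gap is in your plan for \eqref{E:q leq p d>1}. You propose a $TT^*$/anisotropic-stationary-phase argument on slices together with a Bourgain-type $\eps$-removal, but this is both vague and unnecessary. In particular, there is no ``Stein--Tomas endpoint at $q_{ST}=\tfrac{2(d-j-\theta+1)}{d-j-\theta}$'' on a $(d-j)$-dimensional slice: the Stein--Tomas exponent of a slice is a fixed number, not a $\theta$-parametrized family, so the mechanism by which the continuous parameter $\theta$ would enter your stationary-phase bound is never specified. Nor is it clear what ``$\eps$-removal in the thick directions'' means here or why it would produce the precise weight $(l_1\cdots l_j l_{j+1}^\theta)^{1-2/q}$.

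The paper's route is far simpler and uses only what you have already proved. First, H\"older in the domain gives $\|\scriptE^\ell_g\|_{L^p\to L^q}\le (l_1\cdots l_d)^{1/q-1/p}\|\scriptE^\ell_g\|_{L^q\to L^q}$, so one may take $p=q$. Next, the same Drury--Guo slicing you used for \eqref{E:q > p} reduces to $j=0$, and parabolic rescaling normalizes $l_1=1$. Now apply your already-established bound \eqref{E:q > p} at the nearby exponent pair $(p_\nu,q)$ with $p_\nu'=\tfrac{d-\nu}{d-\nu+2}q$ and $\nu<\theta$; since $l_1^\nu=1$, this gives $\|\scriptE^\ell_g\|_{L^{p_\nu}\to L^q}\lesssim 1$, whence by H\"older $\|\scriptE^\ell_g\|_{L^q\to L^q}\lesssim (l_1\cdots l_d)^{1/p_\nu-1/q}\le l_d^{(d-1)(1/p_\nu-1/q)}$. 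Setting $\eps=(d-1)(1/p_\nu-1/q)$ and letting $\nu\nearrow\theta$ yields \eqref{E:q leq p d>1}. No $TT^*$, no stationary phase, no $\eps$-removal is needed; the $(l_d/l_{j+1})^\eps$ loss arises solely from the small gap between $p_\nu$ and $q$.
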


Modulo the precise definition of ellipticity, the two-dimensional version of this conjecture was essentially formulated by Buschenhenke--M\"uller--Vargas in \cite{BMVadd} (one must rescale).  

We have the following positive result.  

\begin{theorem} \label{T:pos}
Conjecture~\ref{C:elliptic conj} implies Conjecture~\ref{C:rect conj}.  In particular, Conjecture~\ref{C:rect conj} holds in the regions $q > \frac{10}3$ and also, when $d\geq 3$, for $(\tfrac1p,\tfrac1q)$ lying in the convex hulls of any of the following pairs of half-open line segments:
\begin{gather*}
\bigl[(1,0),(\tfrac{k^2+3k-2}{2k(k+3)},\tfrac{k+1}{2(k+3)})\bigr), \qtq{and} \bigl[(1,0),(\tfrac{(k+1)^2+3(k+1)-2}{2(k+1)(k+4)},\tfrac{k+2}{2(k+4)})\bigr), \quad 2 \leq k \leq d-1.
\end{gather*}
\end{theorem}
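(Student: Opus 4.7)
The plan has two stages: first, to deduce Conjecture~\ref{C:rect conj} from Conjecture~\ref{C:elliptic conj} by parabolic rescaling and dyadic decomposition; second, to extract the unconditional portion by substituting Theorem~\ref{T:elliptic thm} for Conjecture~\ref{C:elliptic conj}. The main tool throughout is the parabolic scaling invariance of ellipticity noted after Definition~\ref{D:elliptic}: whenever $g$ is elliptic over $Q^\ell$, the dilate $r^{-2}g(r\,\cdot)$ is elliptic over $Q^{\ell/r}$ with the same parameters, with a matching identity relating the operator norm of $\scriptE^\ell_g$ to that of the rescaled extension.

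To prove the scaling-line bound (\ref{E:q > p}), I would rescale by $r := l_{j+1}^\theta l_j^{1-\theta}$, so that the first $j$ sides of the rescaled rectangle $Q^{\ell/r}$ are bounded above by $1$ and the last $d-j$ sides bounded below by $1$. The resulting geometry is \emph{effectively $(d-j-\theta)$-dimensional}: dyadically decomposing the long directions into unit pieces, applying Conjecture~\ref{C:elliptic conj} on each (uniformly), and summing via Minkowski's inequality combined with $L^q$ almost-orthogonality of the pieces' Fourier supports produces the target scaling line $q = \frac{d-j-\theta+2}{d-j-\theta}p'$. Undoing the rescaling introduces the factor $(l_1 \cdots l_j l_{j+1}^\theta)^{1/p'-1/q}$ required in (\ref{E:q > p}). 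Fractional $\theta \in (0,1)$ would be handled by complex interpolation along the scaling line between the $\theta = 0$ and $\theta = 1$ endpoints. The off-line bounds (\ref{E:q leq p d>1}) and (\ref{E:q > 4}) would follow by interpolating (\ref{E:q > p}) against, respectively, the trivial Plancherel-type $L^2 \to L^2$ estimate (with the $\eps$-loss absorbing $O(\log)$ factors from summing over dyadic ratios of consecutive $l_i$) and the H\"older bound $\|\scriptE^\ell_g f\|_\infty \leq |Q^\ell|^{1/p'}\|f\|_p$.

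For the unconditional portion, the same scheme applies with Theorem~\ref{T:elliptic thm} replacing Conjecture~\ref{C:elliptic conj}. The range $q > 10/3$ arises by restricting the effective-dimensional reduction to dimension two, where Theorem~\ref{T:elliptic thm} holds for exactly this range. For $d \geq 3$ and each $2 \leq k \leq d-1$, applying Theorem~\ref{T:elliptic thm} in effective dimensions $k$ and $k+1$ produces the two endpoints $\bigl(\tfrac{k^2+3k-2}{2k(k+3)}, \tfrac{k+1}{2(k+3)}\bigr)$ and $\bigl(\tfrac{(k+1)^2+3(k+1)-2}{2(k+1)(k+4)}, \tfrac{k+2}{2(k+4)}\bigr)$, each of which is the bilinear boundary of the elliptic restriction conjecture in the respective dimension; Riesz--Thorin interpolation with the trivial $(1,0) = (L^1, L^\infty)$ bound then sweeps out the claimed convex hulls.

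The principal obstacle is making the effective dimensional reduction rigorous: one must verify that summing pieces across the dyadic decomposition of the long directions yields the lower-dimensional scaling line rather than the native $d$-dimensional one. This hinges on the $L^q$ almost-orthogonality of the pieces' Fourier supports (which are disjoint rectangles on the paraboloid) and on preservation of ellipticity of the perturbation $h$ after rescaling, both guaranteed by the parabolic-$C^N$ formulation in Definition~\ref{D:elliptic}.
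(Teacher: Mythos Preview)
Your outline has the right architecture---reduce to a lower-dimensional restriction estimate, interpolate, then feed in Theorem~\ref{T:elliptic thm} for the unconditional part---but the mechanism you propose for the dimensional reduction does not work, and two of the subsidiary claims are false.

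\textbf{The dimensional reduction.} Dyadically decomposing the long directions into unit pieces and invoking ``$L^q$ almost-orthogonality'' cannot produce the $(d-j)$-dimensional scaling line $q=\tfrac{d-j+2}{d-j}p'$. On each piece you are still applying a $d$-dimensional extension estimate, so at best you recover the $d$-dimensional line $q=\tfrac{d+2}{d}p'$; moreover, for general $q>2$ there is no $\ell^q$ orthogonality for extensions with disjoint Fourier supports. The paper's route (Lemma~\ref{L:DG}, following Drury--Guo) is to \emph{slice}: take the partial Fourier transform in the \emph{short} spatial variables $x'=(x_1,\ldots,x_j)$, so that $\scriptF_{x'}\scriptE^\ell_g f(t,x'')(\xi')$ is a genuine $(d-j)$-dimensional extension $\scriptE^{\ell''}_{g_{\xi'}}f_{\xi'}$. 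One then chains Hausdorff--Young in $\xi'$, Minkowski (using $q'<q$), the $(d-j)$-dimensional Conjecture~\ref{C:elliptic conj}, and H\"older in $\xi'$ (using $p>q'$). This is what actually converts the $d$-dimensional problem to the $(d-j)$-dimensional scaling line and produces the factor $(l_1\cdots l_j)^{1/p'-1/q}$. The intermediate $\theta\in(0,1)$ are then obtained by interpolating between consecutive integer values of $j$ (Lemma~\ref{L:interstices}), not by a fractional rescaling.

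\textbf{The off-line bounds.} There is no ``Plancherel-type $L^2\to L^2$ estimate'' for $\scriptE^\ell_g$: the extension operator is unbounded from $L^2$ to $L^q$ for any $q<\tfrac{2(d+1)}{d}$. The paper obtains \eqref{E:q leq p d>1} instead by H\"older in $\xi$ to reduce to $p=q$, then Drury--Guo to drop $j$ variables, then H\"older against \eqref{E:q > p} at exponent pairs $(p_\nu,q)$ approaching the diagonal; the $\eps$-loss records this approach to the endpoint, not a dyadic logarithm. For \eqref{E:q > 4}, the paper applies the known $q=3p'$ case of \eqref{E:q > p} (the one-dimensional slice) followed by H\"older, which is close to what you suggest.

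\textbf{The unconditional range $\tfrac{10}{3}<q\le 4$.} Reducing to effective dimension two and quoting Theorem~\ref{T:elliptic thm} gives only the \emph{line} $q=2p'$; off that line, with $l_1=1\le l_2$, H\"older goes the wrong way since $|Q^\ell|\ge 1$. The paper fills this region by a genuine bilinear argument: Whitney decomposition of $Q^\ell\times Q^\ell$, the rescaled bilinear estimate from Lemma~\ref{L:Candy}, and the $L^{q/2}$ orthogonality lemma of \cite{TVV} for the products, together with a case split on $|\Omega|$. Your sketch omits this step entirely.
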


More precise statements of the conditionality may be found in the lemmas leading to the proof of Theorem~\ref{T:pos}.  

In addition, we prove that Conjecture~\ref{C:rect conj}, if true, is essentially optimal, excepting the precise asymptotics as $\frac{l_d}{l_{j+1}} \to \infty$ in the region $q \leq p$, $q \leq 4$.  

\begin{theorem} \label{T:neg}
Let $\ell \in (0,\infty]^d$ satisfy $l_1 \leq \cdots \leq l_d$.  Let $g$ be elliptic over $Q^\ell$ with parameters $N \geq 2$ and $0 < \eps_0 <1$.  Then $\scriptE^l_g$ does not extend as a bounded linear operator for $(p,q)$ lying outside of the region $q \geq \frac{d+2}d p'$, $q > \frac{2(d+1)}d$.  If $l_k = \infty$, some $1 \leq k \leq d$, then $\scriptE^l_g$ does not extend as a bounded operator from $L^p$ to $L^q$ for any $p \geq q$ nor $q \leq \frac{d-k+3}{d-k+1}p'$.  More precisely,  if $q>p$ satisfy $q=\tfrac{d-j-\theta+2}{d-j-\theta} p'$, for some $0 \leq j < d$ and $0 \leq \theta \leq 1$, then
\begin{equation} \label{E:q > p lb}
\|\scriptE^\ell_g\|_{L^p \to L^q} \gtrsim (l_1 \cdots l_j l_{j+1}^\theta)^{\frac1{p'}-\frac1q}.
\end{equation}
If $q = \tfrac{2(d-j-\theta+1)}{d-j-\theta} \leq p$, $0 < \theta \leq 1$, and $j\in\{0,\ldots,d-2\}$, then
\begin{align} \label{E:q leq p d>1}
\|\scriptE^\ell_g\|_{L^p \to L^q} \gtrsim (l_1\cdots l_d)^{\frac1q-\frac1p}\alpha(\tfrac{l_d}{l_{j+1}})(l_1\cdots l_jl_{j+1}^\theta)^{1-\frac2q}. 
\end{align}
Here $\alpha$ depends on $d$, $p$, and $q$; $\alpha \gtrsim 1$; and $\alpha(r) \to \infty$ as $r \to \infty$.  Finally, for $q > 4$ and $p \geq (\tfrac q3)'$,
\begin{align} \label{E:q leq p d=1}
\|\scriptE^\ell_g\|_{L^p \to L^q} \gtrsim (l_1 \cdots l_{d-1})^{\frac1{p'}-\frac1q}l_d^{1-\frac 3q-\frac1p}.
\end{align}
\end{theorem}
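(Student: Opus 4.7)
The plan is to establish each of the lower bounds by testing $\scriptE^\ell_g$ against an explicit extremizer: a single Knapp example for \eqref{E:q > p lb} and \eqref{E:q leq p d=1}, and a randomized superposition of Knapp examples via Khintchine's inequality for the diverging bound \eqref{E:q leq p d>1}. The two unboundedness claims in the opening sentences of the theorem then drop out by letting appropriate sidelengths tend to $\infty$, and the perturbation $h$ is absorbed using the anisotropic rescaling built into Definition~\ref{D:elliptic}.

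The common first step is a paraboloid reduction. For any subrectangle $R = \prod_i[-r_i,r_i]\subset Q^\ell$ with $r_1 \leq \cdots \leq r_d$, Taylor's theorem together with $h(0) = \nabla h(0) = D^2 h(0) = 0$ and $\|(D^2 h)\circ A^r\|_{L^\infty(Q^\1)} < \eps_0$ yields $|h(\xi)| \lesssim \eps_0 |\xi|^2$ on $R$, hence $|t\,h(\xi)| \lesssim \eps_0$ on the dual box $B^* := \{|x_i| \leq cr_i^{-1},\ |t| \leq cr_d^{-2}\}$. Choosing $c$ small (depending on $\eps_0 < 1$), the phase $tg(\xi) + x\cdot\xi$ matches the paraboloid phase to $O(1)$, so $|\scriptE^\ell_g\mathbbm{1}_R| \gtrsim \prod r_i$ on $B^*$ and
$$
\frac{\|\scriptE^\ell_g\mathbbm{1}_R\|_q}{\|\mathbbm{1}_R\|_p} \gtrsim \prod_i r_i^{\frac1{p'}-\frac1q}\cdot r_d^{-\frac2q}.
$$
For \eqref{E:q > p lb}, I take $r_i=l_i$ for $i\leq j$ and $r_i=l_{j+1}$ for $i > j$ (admissible by the ordering), and verify that the identity $q/p' = (d-j-\theta+2)/(d-j-\theta)$ collapses the exponent $(d-j)(1/p'-1/q)-2/q$ on $l_{j+1}$ to exactly $\theta(1/p'-1/q)$, giving $(l_1\cdots l_j l_{j+1}^\theta)^{1/p'-1/q}$. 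For \eqref{E:q leq p d=1}, I take $r_i=l_i$ throughout, so the exponent on $l_d$ is $1/p'-3/q = 1-1/p-3/q$, reproducing the claim directly.

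For \eqref{E:q leq p d>1}, I replace the single cap by a superposition. Fix a cap $R_0$ of the shape from \eqref{E:q > p lb} at the relevant $(j,\theta)$, and tile $Q^\ell$ along the $\xi_d$-axis with $M \sim l_d/l_{j+1}$ disjoint translates $R_k$ centered at $(0,\ldots,0,y_k)$, so that $\|\sum\epsilon_k\mathbbm{1}_{R_k}\|_p = M^{1/p}\|\mathbbm{1}_{R_0}\|_p$ for any signs. The wave packets $\scriptE^\ell_g\mathbbm{1}_{R_k}$ coincide at $t=0$ up to modulations $e^{iy_k x_d}$, with $y_k$ spaced by $\sim l_{j+1}$, and they shear apart linearly in $t$, remaining fully overlapping on $\{|t|\lesssim (l_{j+1}^2 M)^{-1}\}$. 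Khintchine's inequality gives
$$
\mathbb{E}_\epsilon\Bigl\|\scriptE^\ell_g\sum_k\epsilon_k\mathbbm{1}_{R_k}\Bigr\|_q^q \gtrsim \int\Bigl(\sum_k|\scriptE^\ell_g\mathbbm{1}_{R_k}|^2\Bigr)^{q/2}\,dtdx,
$$
and integrating the tube-overlap multiplicity across both the full-overlap plateau and the dispersive regime amplifies the single-cap rate by $\alpha(M)\gtrsim M^\beta$ with $\beta=\beta(p,q,d,j,\theta)>0$; positivity of $\beta$ uses $p\geq q>2$. Selecting a sign pattern realizing the mean proves \eqref{E:q leq p d>1}. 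The unboundedness claims follow: the condition $q\geq\tfrac{d+2}{d}p'$ is the small-cap Knapp condition (all $r_i=r\to 0$); $q>\tfrac{2(d+1)}{d}$ coincides with $q>p$ at the Knapp line and follows from letting $l_d/l_{j+1}\to\infty$ in the superposition; and $q\leq\tfrac{d-k+3}{d-k+1}p'$ under $l_k=\infty$ is the Knapp example with $r_i=l_i$ for $i<k$ and $r_i=r\to 0$ for $i\geq k$.

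The main obstacle is the superposition step: while Khintchine is conceptually clean, tracking the tube-overlap multiplicity through the transition between the plateau and the dispersive regime, and verifying $\beta>0$ uniformly over the stated range of $(p,q,j,\theta)$, takes some bookkeeping. The paraboloid reduction is by contrast robust under the perturbation, since the anisotropic smallness $\|(D^2h)\circ A^r\|_{C^N}<\eps_0$ supplied by Definition~\ref{D:elliptic} holds uniformly over every admissible cap $R$.
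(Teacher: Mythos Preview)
Your Knapp arguments for \eqref{E:q > p lb} and \eqref{E:q leq p d=1} are correct and match the paper's. The gap is in your plan for \eqref{E:q leq p d>1} and for the endpoint $q \leq \tfrac{2(d+1)}d$.

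You propose to take a single column of $M\sim l_d/l_{j+1}$ caps along the $\xi_d$-axis, leave the associated tubes in their natural positions, and harvest the overlap via Khintchine. If one actually carries out the integral $\int(\sum_k|\scriptE\mathbbm{1}_{R_k}|^2)^{q/2}$ for this fan of untranslated tubes, the full-overlap plateau $\{|t|\lesssim (Ml_{j+1}^2)^{-1}\}$ contributes $M^{q/2-1}|R_0|^{q-1}l_{j+1}^{-2}$ and the dispersive regime contributes $\sim M|R_0|^{q-1}l_{j+1}^{-2}$ when $q<4$. Dividing by $\|f\|_p^q=(M|R_0|)^{q/p}$, the amplification over the single-cap rate is exactly $M^{1/q-1/p}$. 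On the diagonal $p=q$ (which is included in the stated range $q\leq p$) this is $1$: there is no gain, and you cannot conclude $\alpha(M)\to\infty$. Your asserted ``positivity of $\beta$ uses $p\geq q>2$'' is simply false at $p=q$.

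The missing idea is that the tubes must be \emph{translated} before Khintchine is applied. The paper (following \cite{BCSS} and Fefferman's Besicovitch construction \cite{ball_non_mult}) modulates each cap by a phase $e^{-i(t_\kappa,x_\kappa)\cdot(g(\xi),\xi)}$, with the $(t_\kappa,x_\kappa)$ chosen so that the translated tubes satisfy $|\bigcup_\kappa (T_\kappa+(t_\kappa,x_\kappa))|<o_N(1)\sum_\kappa|T_\kappa|$. This Kakeya compression is what produces the factor $o_N(1)^{1/q-1/2}$ after H\"older, and hence $\alpha\to\infty$ uniformly in $p\geq q$. The same construction is needed to rule out $q\leq\tfrac{2(d+1)}d$; your sketch ``follows from letting $l_d/l_{j+1}\to\infty$ in the superposition'' inherits the same defect. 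A secondary issue is that a single column does not fill $Q^\ell$, so the factors $l_{j+2},\ldots,l_{d-1}$ never appear in your lower bound; the paper tiles all of $Q^\ell$ by such columns and separates them by additional widely-spaced translations $(t_R,x_R)$ to recover the full $(l_1\cdots l_d)^{1/q-1/p}$.
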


Correct attribution for the statement of Conjecture~\ref{C:rect conj} and prior progress toward Theorems~\ref{T:pos} and~\ref{T:neg} in the literature is somewhat ambiguous, particularly as some prior progress on these questions was not formalized into precisely stated theorems, the hypotheses and generality elsewhere differ, and the  implications of earlier methods and results seem not to have been fully exploited.  We give a recounting of the progress of which we are aware.  For the fully conditional part of Theorem~\ref{T:pos}, we use an elementary deduction, which was used to obtain an alternate proof of the restriction inequality for the cone in \cite{DruryGuo1993}.  In two dimensions, under a more restrictive hypothesis, lower bounds matching those from Theorem~\ref{T:neg} in the region $2p'\leq q\leq 3p'$, $p<q$ were obtained, as were the lower bounds in the region $3<q<4$, modulo the additional gain in $\frac{l_d}{l_{j+1}}$; it was also remarked that the methods in \cite{FU2009} (which, in turn, attributes the method to \cite{DruryGuo1993}) lead to the conditional result in this region.  Nevertheless, the question seems not to have been formulated in this level of generality (particularly with regard to dimension), some of our lower bounds seem to be new in all dimensions, and some of our positive results (in the bilinear range) are obtained by means that also seem to be new in this context.  

Two natural open questions are whether, for particular values of $\ell$, there is a larger range of exponents for which unconditional progress toward Theorem~\ref{T:pos} can be made, and whether unconditional results could be extended along horizontal lines in greater generality than just the bilinear range in two dimensions.

The analogues of Theorems~\ref{T:pos} and~\ref{T:neg} are obtained for the analogous convolution operators in an unpublished manuscript of the first author.  

Our main application is to determine new inequalities and give a simpler proof of known inequalities for a class of degenerate hypersurfaces.  Given $\beta \in (1,\infty)^d$, we define an extension operator
$$
\scriptE_\beta f(t,x) = \int_{Q^{\1}} e^{i(t,x)(g_\beta(\xi),\xi)}f(\xi)\, d\xi, \qquad g_\beta(\xi):=\sum_{j=1}^d |\xi_j|^{\beta_j}.
$$
In the case $d=2$, this extension operator was considered in \cite{FU2009} in the Stein--Tomas range and in \cite{BMVadd} in the bilinear range.  

Varchenko's height \cite{Varchenko} associated to these surfaces is the quantity $h$ defined by
$$
\tfrac1h:=\tfrac1{\beta_1}+\cdots+\tfrac1{\beta_d}.
$$
In determining bounds for $\scriptE_\beta$, intermediate dimensional versions of the height become relevant.  Thus, taking the convention that $\beta_1 \geq \beta_2 \geq \cdots \geq \beta_d$, we also define
$$
J_n:=\tfrac1{\beta_1}+\cdots+\tfrac1{\beta_n}, \qquad 0 \leq n \leq d.
$$

We obtain an essentially optimal conditional result for the operators $\scriptE_\beta$.  To facilitate its statement, we let $T_d$ denote the set of all $(p,q) \in [1,\infty]^2$ for which the local elliptic extension operator is conjectured to be bounded, that is,
\begin{equation}\label{E:def Td}
T_d := \{(p,q) \in [1,\infty]^2 : q > \tfrac{2(d+1)}d, \: q \geq \tfrac{d+2}d p'\}.
\end{equation}

\begin{theorem} \label{T:E_beta}
Assume that Conjecture~\ref{C:rect conj} holds for all $(\tilde p,\tilde q)$ in a relatively open subset $V \subseteq T_d$ containing $(p,q)$.  Then $\scriptE_\beta$ extends as a bounded operator from $L^p$ to $L^q$ if at least one of the following conditions hold:\\
\emph{(i)} $q>p$ and $\tfrac q{p'} \geq 1+\tfrac1{J_n+\frac{d-n}2}$, for all $0 \leq n \leq d$;\\
\emph{(ii)} $q \leq p$ and $\tfrac{1+J_n+d-n}q < \tfrac{J_n}{p'} + \tfrac{d-n}2$, for all $0 \leq n \leq d$; or\\
\emph{(iii)} $q=p$, $\tfrac{1+J_d}q=\tfrac{J_d}{p'}$, and $\tfrac{1+J_n+d-n}q < \tfrac{J_n}{p'} + \tfrac{d-n}2$, for all $0 \leq n < d$.\\
Furthermore, $\scriptE_\beta$ is of restricted weak type $(p,q)$ if \\
\emph{(iv)} $q \leq p \leq  \infty$, $\tfrac{1+J_d}q=\tfrac{J_d}{p'}$, and $\tfrac{1+J_n+d-n}q < \tfrac{J_n}{p'} + \tfrac{d-n}2$, for all $0\leq n<d$.
\end{theorem}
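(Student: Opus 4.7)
\emph{Plan.} My strategy is dyadic frequency decomposition, anisotropic rescaling to elliptic rectangular patches, Conjecture~\ref{C:rect conj}, and geometric summation. Choose a smooth partition of unity $\{\psi_{\vec k}\}_{\vec k \in \Z_{\geq 0}^d}$ subordinate to $Q^\1 = \bigcup_{\vec k} R_{\vec k}$, where $R_{\vec k} := \{\xi : |\xi_j|\sim 2^{-k_j}\}$ (a finite sign ambiguity is absorbed). On each $R_{\vec k}$, centered at $a_{\vec k}$, substitute $\xi = a_{\vec k} + c_{\vec k}\zeta$ with $c_{\vec k,j} := |a_{\vec k,j}|^{(2-\beta_j)/2}$, together with compatible translations and dilations in $(t,x)$. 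A Taylor expansion shows the phase pulls back to $t[|\zeta|^2 + h_{\vec k}(\zeta)] + y\cdot\zeta$ with $h_{\vec k}$ uniformly small in $C^N(Q^\1)$, so $g_{\vec k}(\zeta):=|\zeta|^2+h_{\vec k}(\zeta)$ is elliptic with the required parameters $N,\eps_0$ (uniformly in $\vec k$) over $Q^{\ell_{\vec k}}$ with $l_{\vec k,j}\sim 2^{-k_j\beta_j/2}$. Tracking the Jacobians yields, for any $(\tilde p,\tilde q)$,
$$
\|\scriptE_\beta(\psi_{\vec k}f)\|_{\tilde q} \lesssim \Bigl(\prod_j 2^{k_j(\beta_j-2)/2}\Bigr)^{\frac{1}{\tilde q'}-\frac{1}{\tilde p}} \|\scriptE^{\ell_{\vec k}}_{g_{\vec k}}\|_{L^{\tilde p}\to L^{\tilde q}} \|\psi_{\vec k}f\|_{\tilde p}.
$$

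For $(\tilde p,\tilde q)\in V$, invoke Conjecture~\ref{C:rect conj} after reordering $l_{\vec k,1}\leq\cdots\leq l_{\vec k,d}$, choosing \eqref{E:q > p} when $\tilde q>\tilde p$ and \eqref{E:q leq p d>1} or \eqref{E:q > 4} otherwise. Substituting $l_{\vec k,j}=2^{-k_j\beta_j/2}$ gives
$$
\|\scriptE_\beta(\psi_{\vec k}f)\|_{\tilde q} \lesssim 2^{-\vec k\cdot\gamma(\tilde p,\tilde q,\pi)}\|\psi_{\vec k}f\|_{\tilde p}
$$
for an explicit linear form $\gamma$ depending on $(\tilde p,\tilde q)$ and the ordering permutation $\pi$; for the monomial $g_\beta$, the regimes reduce to the $d+1$ choices of an integer $n\in\{0,\ldots,d\}$ that separates the ``degenerate'' short directions from the rest. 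To sum over $\vec k$, I would use the triangle inequality in $L^{\tilde q}$ when $\tilde q>\tilde p$, and when $\tilde q\leq\tilde p$ exploit the essential Fourier-disjointness of the $\scriptE_\beta(\psi_{\vec k}f)$ on the surface together with H\"older. In either case, absolute summability across every regime corresponds precisely to strict inequality in the condition indexed by some $n$, and the intersection over $n$ recovers exactly the conditions (i) and (ii).

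The endpoint results (iii) and (iv) correspond to equality at $n=d$, where the geometric sum becomes logarithmically divergent at $(p,q)$. For the strong-type $p=q$ endpoint in (iii), use the openness of $V$: apply the above procedure at two nearby pairs $(\tilde p,\tilde q)$ straddling $(p,q)$ inside $V$ and interpolate. For the restricted weak type (iv), take $f=\1_E$, decompose $\1_E=\sum_{\vec k}\psi_{\vec k}\1_E$, and estimate the distribution of $\scriptE_\beta\1_E$ dyadically, summing the near-critical contributions by a standard atomic/Lorentz-space argument to produce an $L^{p,1}\to L^{q,\infty}$ bound along the critical line $\tfrac{1+J_d}{q}=\tfrac{J_d}{p'}$.

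\textbf{Main obstacles.} The principal technical point is verifying that the intersection over the finitely many permutation-orderings of the $l_{\vec k,j}$'s in the summation step collapses to the $d+1$ stated inequalities indexed by $n\in\{0,\ldots,d\}$, and that the $\eps$-losses in \eqref{E:q leq p d>1} can be absorbed by perturbing $(\tilde p,\tilde q)$ within $V$. The secondary difficulty is the restricted weak type in (iv): the atomic/Lorentz-space summation at the critical line is subtle because it is precisely where the geometric sum fails absolute convergence.
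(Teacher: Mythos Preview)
Your strategy is essentially the paper's: dyadic decomposition into $R_{\vec k}$, anisotropic rescaling to place each piece over a rectangle with $l_{\vec k,j}\sim 2^{-k_j\beta_j/2}$ on which the phase is elliptic, invoke Conjecture~\ref{C:rect conj}, and sum. The paper likewise partitions $\N^d$ into cones $\scriptK_\beta^\sigma$ by the ordering of the $k_j\beta_j$ and sums iteratively from the shortest sidelength outward; the resulting exponent involves only $\sum_i(1-2/\beta_i)_+$, which is permutation-invariant, so your reduction to the $d+1$ conditions indexed by $n$ is on the right track, as is absorbing the $\eps$-loss in \eqref{E:q leq p d>1} via strictness in (ii).

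Three points need correction or sharpening.

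\emph{(a)} The ``Fourier-disjointness together with H\"older'' step for $\tilde q\leq\tilde p$ is a red herring. The paper uses the triangle inequality for operator norms, $\|\scriptE_\beta^\sigma\|_{p\to q}\leq\sum_k\|\scriptE_\beta^k\|_{p\to q}$, in \emph{both} regimes $q>p$ and $q\leq p$; Fourier-disjointness of the pieces gives no useful orthogonality in $L^q$ for $q\neq 2$, and none is needed.

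\emph{(b)} Your claim that absolute summability ``recovers exactly the conditions (i) and (ii)'' overshoots for (i): absolute summability of the geometric series corresponds to \emph{strict} inequality, whereas (i) includes its scaling-line boundary $\tfrac q{p'}=1+\tfrac1{J_{n_0}+(d-n_0)/2}$. The paper handles this boundary by a Bourgain-type trick: pick nearby $p_0,p_1$ (same $q$) with exponents $\alpha_0>0>\alpha_1$ in the partial-sum bound, and for $|f_\Omega|\sim\chi_\Omega$ sum $\sum_k\min_i\{2^{\alpha_ik}|\Omega|^{1/p_i}\}\lesssim|\Omega|^{1/p}$. This yields restricted weak type on that segment; since the region $q>p$ in (i) has no vertices, real interpolation upgrades to strong type.

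\emph{(c)} For (iv), ``atomic/Lorentz-space argument'' is too vague. The paper uses precisely the min trick of (b) with two exponents $(p_0,q),(p_1,q)$ straddling $(p,q)$ when $q<p<\infty$. At the extreme point $(p,q)=(\infty,1+\tfrac1{J_d})$ one cannot perturb $p$, so the paper instead takes $q_0<q<q_1$ and runs the same min summation paired against the dual characteristic function.
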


Here we use the not-completely-standard definition that a linear operator $T$, initially defined on $L^1(\R^d)$, is of restricted weak type $(p,q)$ if 
$$
|\langle Tf_E,g_F \rangle| \lesssim |E|^{\frac1p}|F|^{\frac1{q'}},
$$
for all measurable, finite measure $E,F$ and measurable functions $|f_E| \leq \chi_E$ and $|g_F| \leq \chi_F$.  (It will be convenient to note that we may equivalently replace `$\leq$' by `$\sim$' in the conditions on $f_E,g_F$.)  For finite $p$, this is equivalent to the usual definition of restricted weak type boundedness.  

Conditional on the restriction conjecture above rectangles, both the strong and restricted weak type estimates arising in Theorem~\ref{T:E_beta} are sharp.

\begin{proposition} \label{P:sharp E_beta}
If $(p,q) \in [1,\infty]^2$ does not satisfy any of Conditions \emph{(i-iii)} of Theorem~\ref{T:E_beta}, then $\scriptE_\beta$ is not of strong type $(p,q)$.  If $(p,q) \in [1,\infty]^2$ does not satisfy any of Conditions \emph{(i-iv)} of Theorem~\ref{T:E_beta}, then $\scriptE_\beta$ is not of restricted weak type $(p,q)$. 
\end{proposition}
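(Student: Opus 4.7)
My plan is to establish sharpness of Conditions (i)--(iii) for strong type, and (iv) for restricted weak type, via a family of Knapp-type test functions parameterized by $n \in \{0,1,\ldots,d\}$. For each such $n$ and $\lambda \in (0, \tfrac14]$, I set
\begin{equation*}
E_\lambda^{(n)} := \prod_{j=1}^n [\lambda^{2/\beta_j}, 2\lambda^{2/\beta_j}] \times \prod_{j=n+1}^d [\tfrac12, \tfrac12+\lambda],
\end{equation*}
so that $|E_\lambda^{(n)}| \sim \lambda^{2s_n}$ with $s_n := J_n + \tfrac{d-n}{2}$; here the factors $\lambda^{2/\beta_j}$ track the anisotropic scaling symmetry of $g_\beta$ near the origin. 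Taylor expanding $tg_\beta(\xi)+x\cdot\xi$ about the center of $E_\lambda^{(n)}$ using $\partial^2_{\xi_j}g_\beta = \beta_j(\beta_j-1)|\xi_j|^{\beta_j-2}$ shows that $|\scriptE_\beta \chi_{E_\lambda^{(n)}}(t,x)| \gtrsim \lambda^{2s_n}$ on a dual rectangle $B_\lambda^{(n)} \subset \R^{1+d}$ of measure $\sim \lambda^{-2(1+s_n)}$. Testing $\|\scriptE_\beta \chi_{E_\lambda^{(n)}}\|_q \lesssim \|\chi_{E_\lambda^{(n)}}\|_p$ and sending $\lambda\to 0$ produces the necessity of Condition (i) at each $n$ (for $q > p$).

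For $q \leq p$ I tile the ``paraboloidal'' directions, setting
\begin{equation*}
F_\lambda^{(n)} := \bigsqcup_{v \in V_\lambda^{(n)}} (E_\lambda^{(n)}+v), \qquad V_\lambda^{(n)} := \{0\}^n \times \bigl(\lambda\Z^{d-n}\cap [0,\tfrac14]^{d-n}\bigr),
\end{equation*}
so that $\#V_\lambda^{(n)} \sim \lambda^{-(d-n)}$. A direct phase computation, using $\nabla g_\beta(\xi_0+v) \sim 1$ and $D^2 g_\beta \sim I$ in the last $d-n$ directions, shows the wave packets $\scriptE_\beta \chi_{E_\lambda^{(n)}+v}$ are essentially disjoint translates of $B_\lambda^{(n)}$ in the $(x_{n+1},\ldots,x_d)$ variables, together tiling a region of measure $\sim \lambda^{-2(1+J_n+d-n)}$ on which $|\scriptE_\beta \chi_{F_\lambda^{(n)}}| \gtrsim \lambda^{2s_n}$. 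Comparing $L^p$ and $L^q$ norms and sending $\lambda\to 0$ yields the necessity of Condition (ii) at each $n \in \{0,\ldots,d-1\}$.

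Conditions (iii) and (iv) govern the critical line $(1+J_d)/q = J_d/p'$. On this line the single cap test $E_\lambda^{(d)}$ (which coincides with $F_\lambda^{(d)}$, since no further tiling is available) saturates with no $\lambda$-blowup; the bilinear pairing $\langle \scriptE_\beta\chi_{E_\lambda^{(d)}}, \chi_{B_\lambda^{(d)}}\rangle \sim |E_\lambda^{(d)}|^{1/p}|B_\lambda^{(d)}|^{1/q'}$ shows restricted weak type may persist along the entire line, whereas strong type beyond $q=p$ fails by a standard Lorentz space argument (comparing $L^{p,p}$ and $L^{q,\infty}$ norms from disjoint-scale superpositions $\sum_k \chi_{E_{\lambda_k}^{(d)}}$ for a geometric sequence $\lambda_k \to 0$). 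The strict-inequality requirements for $n<d$ in (iii) and (iv) follow from the tiled tests $F_\lambda^{(n)}$ applied to this same restricted weak type pairing.

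The main technical obstacle will be rigorously verifying the near-disjointness of the wave packets in the tiled family $\{E_\lambda^{(n)}+v\}_{v\in V_\lambda^{(n)}}$, both in $(t,x)$ and across variable $t$. This can be handled either by a direct stationary-phase argument or, more conceptually, by the affine change of variables $\xi_j = \delta_j + l_j^{-1}\tilde\eta_j$ with $\delta_j \in\{\lambda^{2/\beta_j},\tfrac12\}$ and $l_j = \delta_j^{\beta_j/2}$, which identifies $\scriptE_\beta|_{R_\delta}$ locally with $\scriptE_{\tilde g}^\ell$ for $\tilde g$ elliptic over $Q^\ell$ in the sense of Definition~\ref{D:elliptic}; the tiled estimates then follow by importing the already-established sharpness of Theorem~\ref{T:neg} and unwinding the scaling.
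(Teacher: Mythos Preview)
Your single-cap test for Condition~(i) and the multi-scale superposition on the critical line $(1+J_d)/q = J_d/p'$ are correct and parallel the paper's argument (the paper routes both through the rescaled lower bounds of Theorem~\ref{T:neg}, recorded as Lemma~\ref{L:Ek lower}, rather than rederiving them from scratch, but the content is the same).

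The genuine gap is in the tiled test $F_\lambda^{(n)}$ for $q\le p$ and $n<d$. First a minor point: the wave packets $\scriptE_\beta\chi_{E_\lambda^{(n)}+v}$ are not ``translates'' of one another in $\R^{1+d}$---they are tubes pointing in $v$-dependent directions, all passing near the origin (a bush). To get the pointwise lower bound on $|\scriptE_\beta\chi_{F_\lambda^{(n)}}|$ you need either phase modulations to separate the tubes or random signs and Khintchine. More seriously, even with this fix, a bush only gives $|\bigcup_v T_v|\sim\sum_v|T_v|$, and your $L^p\to L^q$ ratio then scales like $\lambda^{2[J_n/p'+(d-n)/2-(1+J_n+d-n)/q]}$. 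This blows up in the open region but merely \emph{saturates} on the boundary line $\tfrac{1+J_n+d-n}q=\tfrac{J_n}{p'}+\tfrac{d-n}2$. On that line none of (i)--(iv) hold, so the proposition requires failure of restricted weak type there; a bounded ratio does not deliver this.

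The paper closes this gap by importing the full strength of Theorem~\ref{T:neg}: the Kakeya-type lower bound of Lemma~\ref{L:Kakeya} (Fefferman's overlapping-tube construction plus Khintchine) contributes the extra divergent factor $\tilde\alpha(r)\to\infty$ in \eqref{E:EK lower:q leq p d>1}, and the choice $\tilde k=(\lfloor N/\beta_1\rfloor,\ldots,\lfloor N/\beta_n\rfloor,1,\ldots,1)$ with $N\to\infty$ then gives an unbounded restricted-weak-type ratio even at equality. Your final paragraph names exactly this route (the change of variables identifying $\scriptE_\beta$ on dyadic rectangles with an elliptic extension over $Q^\ell$, then invoking Theorem~\ref{T:neg}) as an ``alternative''; in fact it is not optional but necessary for the endpoint lines, and it is precisely the paper's argument.
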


When $d=2$, Proposition~\ref{P:sharp E_beta} is due to \cite{BMVadd}, and Theorem~\ref{T:E_beta} is due to \cite{BMVadd} in the bilinear range (where it is unconditional) and to \cite{FU2009} in the Stein--Tomas range.  For $d>2$, Theorem~\ref{T:E_beta} is due to \cite{FU2008} in the Stein-Tomas range. Our main new contribution is a direct deduction of the result from Conjecture~\ref{C:elliptic conj}, which leads to a simpler approach that avoids the complicated step of obtaining bilinear restriction estimates between rectangles at different scales.  This simplification enables us to address the higher dimensional case, as well as the case when some exponents $\beta_i$ are less than 2.

The region $(\tfrac1p,\tfrac1q)$ described in Theorem~\ref{T:E_beta} and Proposition~\ref{P:sharp E_beta} can be somewhat difficult to visualize, so we make a few simple observations.  We see the familiar conditions $q \geq \tfrac{d+2}d p'$ and $q>\tfrac{2(d+1)}d$ in the $n=0$ case of each of the constraints.  The lower bound on $\tfrac q{p'}$ in (i) of Theorem~\ref{T:E_beta} is strongest when $J_n+\frac{d-n}{2}$ is minimal, which occurs when $n=n_0$, the minimal index for which $\beta_i<2$ for all $i > n_0$.  Thus the constraint in (i) is strictly stronger than that in the elliptic restriction conjecture unless $n_0=0$, i.e.\ $\beta_i \leq 2$ for all $i$.  

The condition (ii) may introduce some vertices in the Riesz diagram; these all lie on or above the line $\tfrac1q=\tfrac1p$.  For each $n$, the lines $\frac{q}{p'}= 1+\frac{1}{J_n+\frac{d-n}{2}}$ (seen in (i)) and $\tfrac{1+J_n+d-n}q=\tfrac{J_n}{p'}+\tfrac{d-n}{2}$ (seen in (ii)) intersect when $q=p=2+\frac{1}{J_n+\frac{d-n}{2}}$, and these two lines are equal when $n=d$.  The slope of the line $\tfrac{1+J_n+d-n}q=\tfrac{J_n}{p'}+\tfrac{d-n}{2}$ is $-\tfrac{J_n}{1+\tfrac{1+d-n}{J_n}}$, which equals 0 when $n=0$ and decreases as $n$ increases.  
The intersection point of such a line with $q=p$ is $q=p=2+\frac{1}{J_n+\frac{d-n}{2}}$, which moves closer to $(0,0)$ as $n$ increases until $n$ reaches $n_0$, at which point it begins to increase.  Thus only those lines with $n \leq n_0$ play a role in determining the boundary of the region.

\subsection*{Notation} Admissible constants may depend on the dimension $d$, the exponents $p,q$, and the $d$-tuple $\beta$ in the definition of $\scriptE_\beta$, as well as any operator norms on whose finiteness results may be conditioned.  For nonnegative real numbers $A,B$, we will use the notation $A \lesssim B$, $B \gtrsim A$ to mean that $A \leq C B$ for an admissible constant $C$, which is allowed to change from line to line; $A \sim B$ means $A \lesssim B$ and $B \lesssim A$.  We will occasionally subscript constants or the $\lesssim$ notation to indicate dependence on an additional parameter.  For $\lambda \in \R$, $\lambda_+$ denotes the positive part, $\lambda_+ :=\max\{\lambda,0\}$.  

\subsection*{Acknowledgements}  This research was supported in part by National Science Foundation grants DMS-1653264 and DMS-1147523.

\section{Dicing, slicing, and the Morse Lemma on elliptic surfaces}

In this section we will prove a handful of technical lemmas that are useful in developing the restriction theory of hypersurfaces elliptic over a rectangle.  The proofs use only basic calculus.  

The statements of the results will be simpler if we generalize the notion of ellipticity.  

\begin{definition} \label{D:elliptic cvx}
Let $K$ be a convex subset of $\R^d$ with nonempty interior, and let $g \in C^{N+2}_{\rm{loc}}(K)$, with $D^2 g$ positive definite throughout $K$.  We say that $g$ is elliptic over $K$, with parameters $N,\eps_0$, if there exist $\xi_0 \in K$, $U \in O(d)$, and $\ell \in (0,\infty]^d$ such that
$$
K \subseteq \sqrt 2 [D^2 g(\xi_0)]^{-\frac12} U Q^\ell + \xi_0,
$$
and the functions 
\begin{equation} \label{E:tilde g}
\tilde g(\xi):=g(\sqrt 2 D^2 g(\xi_0)^{-\frac12} U \xi + \xi_0) - g(\xi_0) - \sqrt 2 D^2 g(\xi_0)^{-\frac12} U \xi \cdot \nabla g(\xi_0)
\end{equation}
and $\tilde h(\xi):=\tilde g(\xi)-|\xi|^2$ obey
\begin{equation} \label{E:elliptic cvx}
\|\ell^\alpha \partial^\alpha D^2\tilde h\|_{C^N(\tilde K)} < \eps_0, 
\end{equation}
where 
\begin{equation} \label{E:tilde K}
\tilde K:= \tfrac1{\sqrt 2}D^2 g(\xi_0)^{\frac12}U^T(K-\xi_0)).
\end{equation}
\end{definition}

\subsection{Dicing}

Here we will prove that the restriction of a function elliptic over a rectangle is elliptic over smaller rectangles, with improved parameters.  Such results are extremely useful in induction on scales arguments and also in the proofs of our negative results.  

\begin{lemma} \label{L:dicing}
Let $\ell \in (0,\infty]^d$ and let $g$ be elliptic over $Q^\ell$ with parameters $(N,\eps_0)$, some $N \geq 1$.  Let $K \subseteq Q^\ell$ be a convex set with nonempty interior, and assume that $\eps^{-1}(K-\xi_0) \subseteq Q^\ell$ for some $\xi_0 \in K$ and $0 < \eps \leq 1$.  Then $g$ is elliptic over $K$ with parameters $N,C_{N,d} \eps \eps_0$.  
\end{lemma}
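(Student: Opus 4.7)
The plan is to verify Definition \ref{D:elliptic cvx} directly, using the same $\xi_0$ from the hypothesis as the expansion point, $U = I$, and a new scale vector $\tilde\ell$ chosen as an axis-aligned bounding box for the transformed set $\tilde K$. Since the original ellipticity gives $\|D^2 h(\xi_0)\| < \eps_0$, the symmetric map $T := \tfrac{1}{\sqrt{2}}[D^2 g(\xi_0)]^{1/2} = (I + M/2)^{1/2}$ (with $M := D^2 h(\xi_0)$) is a small perturbation of the identity. This ensures $T$ is invertible, the map $\Phi(\xi) := \sqrt{2}[D^2 g(\xi_0)]^{-1/2}\xi + \xi_0$ is a bijection from $\tilde K$ to $K$, and $\tilde\ell_j$ can be taken to be a mild enlargement of $\eps l_j$ accommodating the cross-contamination from the non-diagonal part of $T$.

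The next step is to compute $\tilde h$ and its derivatives. Expanding using $g(\eta) = |\eta|^2 + h(\eta)$, a short manipulation yields
\[
\tilde h(\xi) = \xi^T B \xi + h(\Phi(\xi)) - h(\xi_0) - (\Phi(\xi) - \xi_0) \cdot \nabla h(\xi_0),
\]
where $B := (I + M/2)^{-1} - I$ has norm $O(\eps_0)$. Direct algebra (using that $[D^2g(\xi_0)]^{-1/2}$ commutes with $M$) gives $D^2\tilde h(0) = 0$, consistent with the ellipticity normalization. For $|\alpha| \geq 1$, the constant $B$ drops out and the chain rule produces
\[
\partial^\alpha D^2\tilde h(\xi) = 2 A^T \Bigl(\sum_{|\beta|=|\alpha|} C^\beta_\alpha \,(\partial^\beta D^2 h)(\Phi(\xi))\Bigr) A,
\]
with $A = [D^2 g(\xi_0)]^{-1/2}$ and $C^\beta_\alpha$ a homogeneous polynomial of degree $|\alpha|$ in the entries of $\sqrt{2}A$; since $\sqrt{2}A$ is within $O(\eps_0)$ of the identity, one has $C^\alpha_\alpha = 1 + O(\eps_0)$ while $C^\beta_\alpha$ for $\beta \neq \alpha$ must involve at least one off-diagonal entry and is therefore $O(\eps_0)$.

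The desired bound then splits into two pieces. For $|\alpha| = 0$, I would exploit $D^2\tilde h(0) = 0$ to write $D^2\tilde h(\xi)$ as an integral of $D_\eta(D^2 h)$ along the segment from $\xi_0$ to $\Phi(\xi) \in K \subseteq \xi_0 + \eps Q^\ell$; combined with the original ellipticity bound $|l_k\,\partial_k D^2 h| < \eps_0$ on $Q^\ell$, this yields $\|D^2\tilde h\|_{C^0(\tilde K)} \lesssim \eps\eps_0$. For $|\alpha| \geq 1$, the ellipticity bound $|\ell^\beta \partial^\beta D^2 h| < \eps_0$ together with $\tilde\ell_j \lesssim \eps l_j$ in the leading contribution gives $|\tilde\ell^\alpha\, \partial^\alpha D^2\tilde h| \lesssim \eps^{|\alpha|}\eps_0 \leq \eps\eps_0$, as required.

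The main obstacle is managing the non-diagonal corrections coming from $T \neq I$. Both the bounding box of $\tilde K$ (where $\tilde\ell_j$ picks up contributions of order $\eps\eps_0 \sum_{k\neq j} l_k$ from off-diagonal entries of $T-I$) and the chain-rule coefficients $C^\beta_\alpha$ for $\beta \neq \alpha$ contribute cross-terms involving ratios of the scales. The heart of the argument is to show that these cross-contributions all carry additional factors of $\eps_0$ — arising from the smallness $\|T - I\| = O(\eps_0)$ — that balance the ratio mismatch, so that after summing the combinatorial polynomial of chain-rule terms, everything is uniformly controlled by $C_{N,d}\eps\eps_0$.
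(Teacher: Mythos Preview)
Your approach has a genuine gap: the choice $U=I$ with an axis-aligned bounding box for $\tilde K$ cannot handle arbitrarily large aspect ratios $l_k/l_j$, and the extra $\eps_0$ factors you invoke do \emph{not} compensate for this.

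Concretely, take $d=2$, $\ell=(1,R)$ with $R$ huge, and $h$ elliptic with $\partial_1\partial_2 h(\xi_0)\sim\eps_0$ and $l_1|\partial_1 D^2 h|\sim\eps_0$ on $Q^\ell$ (e.g.\ $h(\xi)=\tfrac{\eps_0}{4}\xi_1\xi_2+\tfrac{\eps_0}{6}\xi_1^3$).  Then $T-I$ has off-diagonal entries $\sim\eps_0$, so the axis-aligned bounding box of $\tilde K=T(K-\xi_0)$ forces $\tilde\ell_1\gtrsim\eps_0\eps R$.  In your expansion of $\tilde\ell_1\,\partial_1 D^2\tilde h$, the term coming from the \emph{diagonal} chain-rule coefficient $C^{e_1}_{e_1}\approx 1$ paired with the \emph{off-diagonal} part of $\tilde\ell_1$ gives
\[
(\eps_0\eps R)\cdot|\partial_1 D^2 h|\ \lesssim\ (\eps_0\eps R)\cdot\tfrac{\eps_0}{l_1}\ =\ \eps\eps_0\cdot(\eps_0 R),
\]
which blows up once $R\gg 1/\eps_0$.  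No additional $\eps_0$ is available here: the cross-contamination in $\tilde\ell_1$ already used up the one $\eps_0$ from $T-I$, and the derivative factor is the diagonal one with no smallness.

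The paper avoids this by \emph{not} taking $U=I$.  It invokes the John ellipsoid theorem to produce $U\in O(d)$ and $\tilde\ell$ with the two-sided inclusion $c_d\,UQ^{\tilde\ell}\subseteq\tilde K\subseteq UQ^{\tilde\ell}$.  The lower inclusion is the crucial ingredient your argument lacks: it forces each vector $v_j:=\sqrt{2}\,D^2g(\xi_0)^{-1/2}U(\tilde\ell_j e_j)$ to lie in $c_d^{-1}(K-\xi_0)\subseteq c_d^{-1}\eps Q^\ell$, hence $|(v_j)_i|\lesssim\eps l_i$ \emph{componentwise}.  Since $\tilde\ell_j\partial_{\xi_j}$ becomes the directional derivative $v_j\cdot\nabla$ after the change of variables, every term in $\tilde\ell^\alpha\partial^\alpha D^2\tilde g$ is then automatically paired with the correct $l_i$, and the original bound $|l_i\partial_i D^2 h|<\eps_0$ applies without any ratio mismatch.
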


\begin{proof}[Proof of dicing lemma]
By taking limits, we may assume that $K$ is compact.  By the John ellipsoid theorem, there exists $\tilde\ell \in (0,\infty)^d$ and $U \in O(d)$ such that
$$
c_d U Q^{\tilde \ell} \subseteq \tfrac1{\sqrt 2} D^2 g(\xi_0)^{\frac12}(K-\xi_0) \subseteq UQ^{\tilde \ell}.
$$
Define $\tilde g$ as in \eqref{E:tilde g} and $\tilde K$ as in \eqref{E:tilde K}.  For $|\alpha| \geq 1$, 
\begin{align*}
\|{\tilde\ell}^\alpha \partial^\alpha D^2 \tilde g\|_{C^0(\tilde K)} &= \|2U^T D^2 g(\xi_0^{-\frac12}(\sqrt 2 D^2 g(\xi_0)^{-\frac12}U\tilde\ell)^\alpha \partial^\alpha D^2 g D^2 g(\xi_0)^{-\frac12}U\|_{C^0(K)}\\
& \leq 2\|D^2 g(\xi_0)^{-\frac12}\|^2\|\theta^\alpha \partial^\alpha D^2 g\|_{C^0(Q^\ell)},
\end{align*}
where 
$$
\theta:=\sqrt 2 D^2 g(\xi_0)^{-\frac12}U \tilde\ell \in K-\xi_0 \subseteq c_d^{-1}\eps Q^\ell.
$$
As $\|D^2 g(\xi_0)^{-\frac12}\| \leq C_d$, in the case $|\alpha| \geq 1$, \eqref{E:elliptic cvx} follows from the ellipticity hypothesis.  In the case $\alpha = 0$, \eqref{E:elliptic cvx} follows from the $|\alpha|=1$ case, $D^2 \tilde g(0) = 2\mathbb I_d$, and the fundamental theorem of calculus.  
\end{proof}

\subsection{Slicing}

Here we will show that the restriction of a function elliptic over a rectangle to some lower-dimensional slice of the rectangle is also elliptic, with comparable parameters.  This result is essential in the (conditional) proof of Theorem~\ref{T:pos}.  

\begin{lemma} \label{L:slicing}
Let $\ell \in (0,\infty]^d$ and let $g$ be elliptic over $Q^\ell$ with parameters $N,\eps_0$.  Let $P \subseteq \R^d$ be an affine $k$-plane, and assume that $P \cap Q^\ell$ has nonempty interior in $P$.  Let $\xi_0 \in P \cap Q^\ell$, and let $\{u_1,\cdots,u_k\}$ be an orthonormal basis for $P-\xi_0$.  Then
$$
 g^\flat(\eta_1,\ldots,\eta_k) :=g(\xi_0+\sum_{j=1}^k \eta_j u_j)
$$
is elliptic over $K:=\{(\eta_1,\ldots,\eta_k):\xi_0+\sum_{j=1}^k \eta_j u_j \in P \cap Q^\ell\}$.
\end{lemma}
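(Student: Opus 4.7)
My plan is to verify Definition~\ref{D:elliptic cvx} directly for $g^\flat$ by exhibiting normalizing data $(\eta_0, V, \ell')$ and estimating the resulting perturbation via the chain rule and the ellipticity of $g$. Take $\eta_0 = 0$ (which lies in $K$ since $\xi_0 \in P \cap Q^\ell$) and let $U'$ be the $d \times k$ isometric embedding with columns $u_1, \ldots, u_k$, so $\xi_0 + U' \eta$ parametrizes $P$. Writing $g = |\xi|^2 + h$, one has
$$
g^\flat(\eta) = |\eta|^2 + |\xi_0|^2 + 2\, \xi_0 \cdot U' \eta + h(\xi_0 + U'\eta),
$$
whence $D^2 g^\flat(0) = 2\mathbb{I}_k + (U')^T D^2 h(\xi_0)\, U'$, a small perturbation of $2\mathbb{I}_k$ since $\|D^2 h(\xi_0)\| < \eps_0$. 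Apply John's theorem (handling unbounded directions by allowing entries $+\infty$) to the convex set $L := \tfrac{1}{\sqrt 2}[D^2 g^\flat(0)]^{1/2} K$ to produce $V \in O(k)$ and $\ell' \in (0, \infty]^k$ with $c_k V Q^{\ell'} \subseteq L \subseteq V Q^{\ell'}$ for some dimensional $c_k > 0$; the outer containment yields the required $K \subseteq \sqrt 2\, [D^2 g^\flat(0)]^{-1/2} V Q^{\ell'}$.

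To bound the derivatives in \eqref{E:elliptic cvx}, set $M := \sqrt 2\, [D^2 g^\flat(0)]^{-1/2} V$ and $W := U'M$, noting $\|W\| \sim 1$. Then $\tilde g^\flat(\eta) - |\eta|^2$ splits as $\bigl(|M\eta|^2 - |\eta|^2\bigr) + \tilde H(\eta)$ with $\tilde H(\eta) := h(\xi_0 + W\eta) - h(\xi_0) - W\eta \cdot \nabla h(\xi_0)$. The quadratic part has Hessian $2(M^T M - \mathbb{I}_k)$ of norm $O(\eps_0)$ and vanishing higher derivatives. For $\tilde H$, the chain rule gives $\partial^\alpha D^2 \tilde H(\eta) = W^T (D_W^\alpha D^2 h)(\xi_0 + W\eta) W$ with $D_W^\alpha := \prod_m \bigl(\sum_r W_{rm}\partial_{\xi_r}\bigr)^{\alpha_m}$; combining $\|W\| \sim 1$ with the hypothesis $\|\ell^\beta \partial^\beta D^2 h\| \leq \eps_0$ on $Q^\ell$ and expanding the directional derivatives term-by-term yields
$$
\|\partial^\alpha D^2 \tilde H(\eta)\| \lesssim \eps_0 \prod_{m=1}^k a_m^{\alpha_m}, \qquad a_m := \sum_{r=1}^d |W_{rm}|/l_r.
$$

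The main obstacle is showing $l'_m a_m \lesssim 1$, the scale-compatibility condition relating the slice scales $\ell'$ to the ambient scales $\ell$. Unwinding the inner inclusion $c_k V Q^{\ell'} \subseteq L$ gives $c_k W Q^{\ell'} \subseteq (P \cap Q^\ell) - \xi_0 \subseteq Q^\ell - \xi_0$. Evaluating this containment at the extreme point $\bigl(c_k l'_i \operatorname{sign} W_{ri}\bigr)_{i=1}^k$ of $c_k Q^{\ell'}$ and reading off the $r$-th coordinate produces the row-sum bound $\sum_i l'_i |W_{ri}| \lesssim l_r$ for each $r$; since each summand is nonnegative, each individual term $l'_i |W_{ri}|/l_r$ is bounded, and summing over $r$ gives $l'_m a_m \lesssim d$. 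Combining with the derivative bounds above shows $\|\ell'^\alpha \partial^\alpha D^2 \tilde h^\flat\|_{C^N} \lesssim_{d,N} \eps_0$, so $g^\flat$ is elliptic over $K$ with parameters $(N, C_{d,N} \eps_0)$.
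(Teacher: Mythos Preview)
Your argument is correct and follows essentially the same strategy as the paper: apply John's theorem to produce the rotation $V$ and scales $\ell'$, use the chain rule to express derivatives of $\tilde h^\flat$ in terms of derivatives of $h$, and verify scale-compatibility via the containment $c_k W Q^{\ell'} \subseteq Q^\ell - \xi_0$. Your extreme-point step, which extracts the row-sum bound $\sum_i \ell'_i |W_{ri}| \lesssim l_r$ (and hence $\ell'_m a_m \lesssim d$), makes explicit the control over all sign choices needed in the chain-rule expansion --- a detail the paper's terser proof leaves implicit.
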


\begin{proof}  By taking limits, we may assume that $K$ is compact.  By the John ellipsoid theorem, there exists $\tilde \ell \in (0,\infty)^k$ and $V \in O(k)$ such that
$$
c_d V Q^{\tilde\ell} \subseteq \tfrac1{\sqrt 2} D^2 g^\flat(0)^{\frac12}K \subseteq VQ^{\tilde\ell}.
$$
Set $\tilde K:=\tfrac1{\sqrt 2} V^TD^2 g^\flat(0)^{\frac12}K$, 
$$
\tilde g^\flat(\eta):=g^\flat(\sqrt2 D^2 g^\flat(0)^{-\frac12}V\eta)-g^\flat(0) - \sqrt 2 D^2 g^\flat(0)^{-\frac12}V\eta \cdot \nabla g^\flat(0),
$$
and $\tilde h^\flat(\eta) := \tilde g^\flat(\eta) - |\eta|^2$, $\eta \in \R^k$.  

Extend the given basis for $P-\xi_0$ to an orthonormal basis $\{u_1,\cdots,u_d\}$ of $\R^d$ and set $U:=(u_1,\ldots,u_d) \in O(d)$.  Then
$$
c_d U[\sqrt 2 D^2 \tilde g(0)^{-\frac12}VQ^{\tilde \ell} \times \{0\}] \subseteq (P \cap Q^\ell)-\xi_0 \subseteq U[\sqrt 2 D^2 \tilde g(0)^{-\frac12}VQ^{\tilde \ell} \times \{0\}].  
$$
Let $|\alpha| \geq 1$ be a multiindex.  By the chain rule,
\begin{align*}
\|\tilde\ell^\alpha \partial^\alpha D^2 \tilde g^\flat\|_{C^0(\tilde K)}
&\leq \|\sqrt 2 D^2 g^\flat(0)^{-\frac12}\|^2\|(\sqrt 2 D^2 g(0)^{-\frac12}V\tilde\ell)^\alpha \partial^\alpha D^2 g^\flat\|_{C^0(K)}\\
&\leq \|\sqrt 2 D^2 g^\flat(0)^{-\frac12}\|^2 \|[U(\sqrt 2 D^2 \tilde g(0)^{-\frac12}V\tilde\ell,0)]^\alpha \partial^\alpha D^2 g\|_{C^0(Q^\ell)}.
\end{align*}
Finally, since 
$$
U(\sqrt 2 D^2 \tilde g(0)^{-\frac12}V\tilde\theta,0) \in U[\sqrt 2 D^2 \tilde g(0)^{-\frac12}VQ^{\tilde \ell} \times \{0\}] \subseteq c_d^{-1}((P \cap Q^\ell)-\xi_0) \subseteq c_d^{-1}Q^\ell,
$$
inequality \eqref{E:elliptic cvx} holds for $|\alpha| \geq 1$; the case $|\alpha|=0$ follows analogously, by considering $\tilde h^\flat$.  
\end{proof}
 
 \subsection{Diffeomorphizing}
 
 Finally, we note that the following version of the Morse lemma follows readily by adapting standard undergraduate-level proofs of the Morse lemma to functions elliptic over rectangles.  (We omit the details of this elementary adaptation.)  This result allows us to invoke the classical arguments involving stationary phase, including the Stein--Tomas and Strichartz theorems.   

\begin{lemma} \label{L:Morse}
Let $g$ be elliptic over $Q^\ell$, $\ell \in (0,\infty)^d$, with parameters $N \geq 1$ and $0 < \eps \leq \eps_d$.  Then exist $U \subseteq \R^d$ and a $C^N$ diffeomorphism $F$ of $U$ onto $Q^\1$ such that
$$
g (l_1 F_1,\ldots,l_d F_d) = \sum_j (l_j u_j)^2
$$
and $\|F(\eta)-\eta\|_{C^N_\eta(U)} < C(\eps)$.  
\end{lemma}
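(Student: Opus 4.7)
The plan is to adapt the classical Morse lemma to respect the rectangular scaling, replacing the symmetric matrix square root of the perturbation's Hessian with a lower-triangular (Cholesky-type) factorization. By permuting coordinates, we may assume $l_1 \leq \cdots \leq l_d$.

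First, I would apply Hadamard's lemma to $h$: since $h$ vanishes to second order at the origin, $h(\zeta) = \zeta^T H(\zeta) \zeta$ for the symmetric $C^N$ matrix-valued function $H(\zeta) := \int_0^1 (1-t)(D^2 h)(t\zeta)\,dt$. The ellipticity hypothesis, combined with the chain rule, yields the rescaled bound $|\ell^\alpha \partial_\zeta^\alpha H(A^\ell \eta)| \lesssim \eps$ uniformly for $\eta \in Q^\1$ and $|\alpha| \leq N$.

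Next, for $\eps$ sufficiently small, factor $\mathbb{I} + H(\zeta) = L(\zeta)^T L(\zeta)$ with $L$ lower-triangular and $L_{ii} > 0$; this bottom-up Cholesky-type decomposition exists and depends smoothly on $H$ in a neighborhood of zero, with $L_{ii}(\zeta) \approx 1 + \tfrac{1}{2}H_{ii}(\zeta)$ and $L_{ij}(\zeta) \approx H_{ij}(\zeta)$ for $i > j$ to leading order. Set $\Phi(\zeta) := L(\zeta)\zeta$, so that $|\Phi(\zeta)|^2 = \zeta^T L^T L \zeta = g(\zeta)$. Transferring to the rescaled variable via $\tilde\Phi(\eta) := (A^\ell)^{-1}\Phi(A^\ell \eta)$ gives
\[
\tilde\Phi_i(\eta) = \sum_{j \leq i} (l_j/l_i)\, L_{ij}(A^\ell \eta)\, \eta_j.
\]
The essential gain is that only indices $j \leq i$ appear, so each factor $l_j/l_i$ is at most $1$; combining this with the rescaled estimates on $H$ from the previous step and a Fa\`{a} di Bruno computation yields $\|\tilde\Phi - \mathrm{id}\|_{C^N(Q^\1)} \leq C(\eps)$ with $C(\eps) \to 0$ as $\eps \to 0$.

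Finally, for $\eps \leq \eps_d$ sufficiently small, $\tilde\Phi$ is a $C^N$ diffeomorphism of $Q^\1$ onto $U := \tilde\Phi(Q^\1)$. Taking $F := \tilde\Phi^{-1}$ produces a $C^N$ diffeomorphism of $U$ onto $Q^\1$; a quantitative inverse function theorem transfers the $C^N$ smallness to $\|F - \mathrm{id}\|_{C^N(U)} \leq C(\eps)$, and the identity $g(A^\ell F(u)) = |A^\ell u|^2$ follows from $A^\ell u = \Phi(A^\ell F(u))$ and $|\Phi|^2 = g$. The main obstacle is the quantitative $C^N$ bound: the naive symmetric Morse root $\Phi = (\mathbb{I}+H)^{1/2}\zeta$ would produce off-diagonal entries amplified by $l_i/l_j$ after conjugation by $A^\ell$, which can be arbitrarily large; the lower-triangular factorization evades this precisely by forcing $\Phi_i$ to depend only on coordinates with sidelength at most $l_i$.
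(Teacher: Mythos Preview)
Your proposal is correct and, since the paper omits the proof entirely (stating only that it ``follows readily by adapting standard undergraduate-level proofs of the Morse lemma''), you have supplied exactly the details the paper leaves out.  The classical inductive proof of the Morse lemma (as in Milnor) proceeds by successive completing-the-square, which composes to a triangular change of variables; your global Cholesky factorization $I+H=L^TL$ is a clean reformulation of that same idea.  Your key observation---that the symmetric square root $(I+H)^{1/2}$ would, after conjugation by $A^\ell$, produce off-diagonal terms carrying factors $l_j/l_i$ with $j>i$ that may be unbounded, whereas the lower-triangular factor only sees ratios $l_j/l_i\leq 1$---is precisely the content of the word ``adapting'' in the paper's one-line justification, and is the only nontrivial point in the argument.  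The remaining steps (Hadamard integral for $H$, rescaled $C^N$ bounds transferred through the smooth dependence of Cholesky on the matrix, quantitative inverse function theorem) are routine, and your sketch handles them correctly.
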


\section{Negative results:  The proof of Theorem~\ref{T:neg}}

For simplicity, we give a complete proof of Theorem~\ref{T:neg}, recalling that it is already known in some cases.  We will actually prove a slightly stronger result.  Let 
$$
\|\scriptE^\ell_g\|_{L^p \to L^q}^{\rm{RWT}}:=\sup_{E,F} \tfrac{|\langle \scriptE^\ell_g f_E,g_F \rangle|}{|E|^{\frac1p}|F|^{\frac1{q'}}},
$$
where the supremum is taken over measurable sets $E,F$ with positive, finite measures and measurable functions $|f_E| \leq \chi_E$, $|g_F| \leq \chi_F$.  

\begin{proposition} \label{P:neg}
The conclusions of Theorem~\ref{T:neg} hold with $\|\scriptE^\ell_g\|_{L^p \to L^q}^{\rm{RWT}}$ in place of $\|\scriptE^\ell_g\|_{L^p \to L^q}$.  
\end{proposition}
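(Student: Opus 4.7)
The plan is to realize each lower bound claimed in Theorem~\ref{T:neg} as an explicit restricted-weak-type configuration $(E, F, f_E, g_F)$. First I invoke Lemma~\ref{L:Morse} to reduce to the model $g(\xi) = |\xi|^2$, so that $\scriptE^\ell_g \chi_R$ factorizes as a product of one-dimensional Fresnel integrals whose Knapp and stationary-phase regimes are standard.

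For \eqref{E:q > p lb}, I use the standard Knapp cap. Let $R \subseteq Q^\ell$ have sidelengths $a_i = l_i$ for $i \leq j$ and $a_i = l_{j+1}$ for $i > j$; let $R^*$ be the dual box with sides $a_i^{-1}$ in $x_i$ and $l_{j+1}^{-2}$ in $t$; set $E = R$, $F = R^*$, $f_E = \chi_E$, and $g_F = \chi_F$ multiplied by the unimodular phase that aligns $\scriptE f_E$ on $F$. Then $|\langle \scriptE f_E, g_F\rangle| \gtrsim |R||R^*|$, and the RWT ratio $|R|^{1/p'}|R^*|^{1/q}$ reduces, via the relation $q = \frac{d-j-\theta+2}{d-j-\theta}p'$, to $(l_1\cdots l_j l_{j+1}^\theta)^{1/p'-1/q}$. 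For \eqref{E:q leq p d=1}, the same template with $R = Q^\ell$ (whose dual is now a tube extending in $t$) yields $(l_1 \cdots l_{d-1})^{1/p'-1/q}l_d^{1-3/q-1/p}$ by a direct computation.

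The delicate case is \eqref{E:q leq p d>1}. The base factor $(l_1 \cdots l_d)^{1/q-1/p}(l_1 \cdots l_j l_{j+1}^\theta)^{1-2/q}$ is again produced by an optimized Knapp example, now choosing sidelengths matched to $(l_1, \ldots, l_j)$ and interpolated at $l_{j+1}$ so as to realize the scaling $q = \frac{2(d-j-\theta+1)}{d-j-\theta}$. To extract the $\alpha(l_d/l_{j+1})\to\infty$ blowup, I combine three ingredients: (i) \emph{monotonicity} of $\|\scriptE^\ell_g\|^{\mathrm{RWT}}$ under componentwise enlargement of $\ell$, since $Q^\ell \subseteq Q^{\ell'}$ admits strictly more test configurations; (ii) \emph{parabolic scaling}, reducing the dimensionless ratio $\|\scriptE^\ell_g\|^{\mathrm{RWT}}/(\text{base})$ to a function of the scale-invariant quantities $l_i/l_{j+1}$; and (iii) \emph{unboundedness} of $\|\scriptE^\ell_g\|^{\mathrm{RWT}}$ when some $l_k = \infty$. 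Setting $\alpha(r) := \inf\{\|\scriptE^\ell_g\|^{\mathrm{RWT}}/(\text{base}) : l_d/l_{j+1} \geq r\}$, compactness along sequences with $l_d^{(n)}/l_{j+1}^{(n)} \to \infty$ shows $\alpha(r) \to \infty$; otherwise a limiting configuration would have finite RWT contradicting (iii).

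The main obstacle is step (iii), establishing the RWT unboundedness when some $l_k = \infty$. For $(p, q)$ strictly inside the forbidden range, one constructs long-slab Knapp examples: take $E_n \subseteq Q^\ell$ of length $n$ in the $\xi_k$-direction and compute $|E_n|^{1/p'}|E_n^*|^{1/q}$ directly, observing polynomial growth in $n$. The diagonal case $p \geq q$, where the boundary line $q = \frac{d-k+3}{d-k+1}p'$ passes through $(p,q)$, is more subtle: one takes $f_E = \chi_{E_n}$ with $E_n$ a long slab and chooses $F$ to be the ``caustic tube'' in the $l_k$-direction where $|\scriptE f_E|$ maintains stationary-phase magnitude $|t|^{-1/2}$, and then verifies that the RWT ratio diverges in $n$. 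In borderline/endpoint diagonal cases one supplements the above with random-sign ensembles over many small Knapp caps and a Khintchine-type argument to produce pointwise overlap of the wave-packet tubes on appropriate dual regions.
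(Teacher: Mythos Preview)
Your treatment of \eqref{E:q > p lb} and \eqref{E:q leq p d=1} via Knapp caps is correct and matches the paper's Lemmas~\ref{L:q>p neg} and the subsequent lemma.  The reduction to $g=|\xi|^2$ via Lemma~\ref{L:Morse} is a harmless cosmetic difference; the paper instead Taylor-expands $g$ directly.

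The argument for \eqref{E:q leq p d>1}, however, has a genuine gap.  Your scheme is to \emph{deduce} the blowup $\alpha(l_d/l_{j+1})\to\infty$ from (iii) the unboundedness of $\|\scriptE^\ell_g\|^{\mathrm{RWT}}$ at $l_d=\infty$.  But in the paper the logical flow is the reverse: unboundedness at $l_k=\infty$ is obtained \emph{from} the quantitative lower bounds \eqref{E:q > p lb}--\eqref{E:q leq p d=1} via monotonicity~\eqref{E:monotonicity}.  So you must prove (iii) independently in the range $q\le p$, $\tfrac{2(d+1)}{d}<q\le 4$.  Your proposed tools there do not suffice.  A single long slab $E_n$ gives an RWT ratio $\sim n^{1/p'-3/q}$, which is bounded precisely when $q\le 4$ and $q\le p$---the range in question.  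The ``caustic tube'' idea fares no better: on the stationary-phase region $|\scriptE\chi_{E_n}|\sim |t|^{-1}$, and $\int |t|^{-q}\cdot|t|\,dt$ converges for $q>2$, so no divergence is produced.  You are left with the random-sign/Khintchine construction, which is exactly the Kakeya argument that the paper uses \emph{directly} to prove \eqref{E:q leq p d>1} (and the $q\le\tfrac{2(d+1)}{d}$ unboundedness) in Lemma~\ref{L:Kakeya}.

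Even granting (iii), the ``compactness'' step is not a proof.  After parabolic rescaling to $l_{j+1}=1$, a sequence with $l_d^{(n)}\to\infty$ still carries free parameters $l_1^{(n)},\ldots,l_j^{(n)}\in(0,1]$, $l_{j+2}^{(n)},\ldots,l_{d-1}^{(n)}\in[1,l_d^{(n)}]$, and a varying elliptic $g_n$; there is no compact limiting object, and no lower-semicontinuity of the RWT norm is available to pass to a limit.  The paper avoids this entirely: it fixes $N$ with $l_d/l_{j+1}>N^3$, builds an explicit test function $F$ with $|F|\lesssim\chi_{Q^\ell}$ by tiling $Q^\ell$ into strips $R$ and caps $\kappa$, compresses the dual tubes $T_\kappa$ via Fefferman's Besicovitch-type construction to force \eqref{E:Kakeya}, and extracts the gain $o_N(1)^{-1}$ through Khintchine.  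This yields a uniform, explicit $\alpha$ for all $\ell$ and all elliptic $g$ simultaneously---which is what \eqref{E:q leq p d>1} demands.
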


The rest of this section will be devoted to the proof of Proposition~\ref{P:neg}.  We will use the {convention} that references to equations in the statement of Theorem~\ref{T:neg} shall be superscripted with `$\rm{RWT}$.'

That $\|\scriptE^\ell_g\|_{L^p \to L^q}^{\rm{RWT}}$ is infinite whenever $q<\tfrac{d+2}d p'$ follows from the classical Knapp example.  The case when $q \leq \tfrac{2(d+1)}d$ follows from a slight modification of the argument from \cite{BCSS}, which will be given in Lemma~\ref{L:Kakeya}.  The assertion regarding unboundedness of $\scriptE^\ell_g$ when some $l_j$ are infinite follows from the lower bounds (\ref{E:q > p lb}-\ref{E:q leq p d=1}) and the elemenary inequality 
\begin{equation} \label{E:monotonicity}
\|\scriptE^\ell_g\|_{L^q \to L^p}^{\rm{RWT}} \geq \|\scriptE^{\tilde\ell}_g\|_{L^q \to L^p}^{\rm{RWT}}, \qtq{for} \tilde l_j \leq l_j,\: j=1,\ldots,d.  
\end{equation}

It remains to prove the lower bounds in the case that each $l_j$ is finite.  This will be carried out in three lemmas, one for each numbered inequality.

\begin{lemma}\label{L:q>p neg}
The lower bound \eqref{E:q > p lb} from Theorem~\ref{T:neg} is valid in the range $q>p$.
\end{lemma}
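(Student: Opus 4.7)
The plan is to construct an explicit Knapp-type example adapted to the rectangular geometry of $Q^\ell$ and the indices $(j,\theta)$. Given $0 \leq j < d$ and $0 \leq \theta \leq 1$ with $q = \tfrac{d-j-\theta+2}{d-j-\theta}p' > p$, I would take as Knapp rectangle
$$
R := \prod_{i=1}^j (-l_i,l_i) \times \prod_{i=j+1}^d (-l_{j+1},l_{j+1}),
$$
which lies inside $Q^\ell$ by the ordering $l_1 \leq \cdots \leq l_d$. Notably, $R$ itself does not depend on $\theta$; the $\theta$-dependence of the target bound enters only through the exponent $q$. Set $E := R$, $f_E := \chi_R$, and take the dual region
$$
S := \{(t,x) \in \R^{1+d} : |t| \leq c\, l_{j+1}^{-2},\: |x_i| \leq c\, l_i^{-1} \text{ for } i \leq j,\: |x_i| \leq c\, l_{j+1}^{-1} \text{ for } i > j\},
$$
together with $F := S$, $g_F := \chi_S$, for a small absolute $c > 0$ to be chosen.

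For $(t,x) \in S$ and $\xi \in R$, I would split the phase as $tg(\xi) + x\cdot\xi = t|\xi|^2 + x\cdot\xi + t\,h(\xi)$; the first two summands are manifestly $O(c)$ by the choice of $R$ and $S$. The perturbation term is controlled via ellipticity: Definition~\ref{D:elliptic} with $\tilde\ell = \ell$ yields $|D^2 h| < \eps_0$ pointwise on $Q^\ell$, so Taylor's theorem together with $h(0) = \nabla h(0) = 0$ gives $|h(\xi)| \lesssim \eps_0 |\xi|^2 \lesssim \eps_0 l_{j+1}^2$ on $R$; hence $|t\,h(\xi)| \lesssim \eps_0 c$. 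Choosing $c$ small then forces $\operatorname{Re} e^{i(tg(\xi) + x\cdot\xi)} \geq \tfrac12$ uniformly on $R\times S$, whence $|\langle \scriptE^\ell_g f_E, g_F\rangle| \gtrsim |R|\,|S|$.

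The remainder is arithmetic. From $|R| \sim l_1 \cdots l_j \cdot l_{j+1}^{d-j}$ and $|S| \sim (l_1\cdots l_j)^{-1} l_{j+1}^{-(d-j+2)}$ one reads off
$$
\frac{|\langle \scriptE^\ell_g f_E, g_F\rangle|}{|E|^{1/p}|F|^{1/q'}} \gtrsim |R|^{1/p'}|S|^{1/q} = (l_1 \cdots l_j)^{1/p' - 1/q}\, l_{j+1}^{(d-j)/p' - (d-j+2)/q},
$$
and a direct computation shows the exponent of $l_{j+1}$ equals $\theta(1/p' - 1/q)$ via the identity $1/p' - 1/q = \tfrac{2}{q(d-j-\theta)}$, which is equivalent to the scaling hypothesis. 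This is exactly the claimed lower bound $(l_1 \cdots l_j l_{j+1}^\theta)^{1/p' - 1/q}$. The only point requiring real care is the phase estimate, where the $t$-scale $l_{j+1}^{-2}$ must be matched to the square of the largest sidelength of $R$ so that the paraboloid curvature contribution stays bounded, while the smallness of $\eps_0$ absorbs the perturbation $t\,h(\xi)$; everything else is routine bookkeeping.
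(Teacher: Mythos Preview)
Your proposal is correct and follows essentially the same Knapp-example approach as the paper: both take the rectangle $R$ with sidelengths $l_1,\ldots,l_j,l_{j+1},\ldots,l_{j+1}$ and pair it with its dual parallelepiped, then verify the arithmetic identity $(d-j)(\tfrac1{p'}-\tfrac1q)-\tfrac2q=\theta(\tfrac1{p'}-\tfrac1q)$. You are in fact more explicit than the paper about the phase estimate---in particular your handling of the perturbation $t\,h(\xi)$ via the $C^0$ bound $|D^2h|<\eps_0$ on $Q^\ell$---and your use of characteristic functions rather than smooth bumps is arguably cleaner for the restricted-weak-type formulation being proved.
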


\begin{proof}
The argument is an elementary generalization of the Knapp example.  Let $j\in\{0,\ldots,d-1\}$, $0 \leq \theta \leq 1$, and assume that $q>p$ satisfy $q=\tfrac{d-j-\theta+2}{d-j-\theta}p'$.  

Let $\phi$ be a smooth, nonnegative function with $\supp \phi \subseteq Q^\1$ and $\int\phi=1$.  Set
$$
\phi^{j,\ell}(\xi) = \phi(\tfrac{\xi_1}{l_1},\ldots,\tfrac{\xi_j}{l_j},\tfrac{\xi_{j+1}}{l_{j+1}},\ldots,\tfrac{\xi_d}{l_{j+1}}).
$$
Then $|\scriptE^\ell_g \phi^{j,\ell}| \gtrsim l_1\cdots l_j l_{j+1}^{d-j}$ on a rectangle with volume $(l_1\cdots l_j)^{-1}l_{j+1}^{-(d-j+2)}$.  After a little arithmetic
\begin{equation} \label{E:q>p neg}
\|\scriptE^\ell_g\|_{L^p \to L^q}^{\rm{RWT}} \gtrsim \tfrac{l_1\cdots l_j l_{j+1}^{d-j} [(l_1\cdots l_j)^{-1}l_{j+1}^{-(d-j+2)}]^{\frac1q}}{(l_1 \cdots l_j l_{j+1}^{d-j})^{\frac1p}} = (l_1\cdots l_j)^{\frac1{p'}-\frac1q}l_{j+1}^{(d-j)(\frac1{p'}-\frac1q)-\frac2q}.
\end{equation}
All that remains is the arithmetic verification of $(d-j)(\frac1{p'}-\frac1q)-\frac2q=\theta(\frac1{p'}-\frac1q)$ when $q=\frac{d-j-\theta+2}{d-j-\theta} p'$; we leave this to the reader.
\end{proof}

\begin{lemma}
The lower bound \eqref{E:q leq p d=1} from Theorem~\ref{T:neg} is valid in the range $q \leq p$, $q>4$.  
\end{lemma}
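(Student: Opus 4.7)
The plan is to use a single ``fat Knapp'' example built on the entire rectangle $Q^\ell$ paired with its dual box. Concretely, take $E := Q^\ell$, $f_E := \chi_{Q^\ell}$, and
$$
F := \bigl\{(t,x) \in \R^{1+d} : |t| < c\, l_d^{-2},\; |x_j| < c\, l_j^{-1},\; j=1,\ldots,d\bigr\},
$$
for a small absolute constant $c>0$ to be chosen.

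The first step is to verify that the phase $\phi(\xi;t,x):= t g(\xi)+x\cdot\xi$ is uniformly small on $E\times F$. Since $g(\xi)=|\xi|^2+h(\xi)$ with $h(0)=\nabla h(0)=D^2h(0)=0$ and $\|D^2h\|_{L^\infty(Q^\ell)}<\eps_0<1$ by Definition~\ref{D:elliptic}, Taylor's theorem yields $|h(\xi)|\lesssim |\xi|^2$ and hence $|g(\xi)|\lesssim l_d^2$ on $Q^\ell$. Combined with $|x_j|\,l_j<c$ and $|t|\,l_d^2<c$, this gives $|\phi(\xi;t,x)|\lesssim c$ throughout $E\times F$. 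Choosing $c$ small enough forces $\mathrm{Re}(e^{i\phi})\geq \tfrac12$, so
$$
\mathrm{Re}\bigl(\scriptE^\ell_g f_E(t,x)\bigr)\;\gtrsim\;|E|\;\sim\; l_1\cdots l_d, \qquad (t,x)\in F.
$$

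Next, take $g_F:=\chi_F$ (a valid RWT test function). Then
$$
|\langle \scriptE^\ell_g f_E,g_F\rangle|\;\geq\;\mathrm{Re}\,\langle \scriptE^\ell_g f_E,g_F\rangle\;\gtrsim\;|E|\,|F|\;\sim\;l_d^{-2}.
$$
Dividing by $|E|^{1/p}|F|^{1/q'}$ and bookkeeping the powers of the $l_j$ (splitting $l_d$ from the product $l_1\cdots l_{d-1}$) gives
$$
\|\scriptE^\ell_g\|^{\mathrm{RWT}}_{L^p\to L^q}\;\gtrsim\;(l_1\cdots l_d)^{\frac1{p'}-\frac1q}\,l_d^{-\frac2q}\;=\;(l_1\cdots l_{d-1})^{\frac1{p'}-\frac1q}\,l_d^{1-\frac{3}{q}-\frac1p},
$$
which is exactly \eqref{E:q leq p d=1}.

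There is essentially no obstacle here: the axis-alignment of $Q^\ell$ with the quadratic part of $g$ is precisely what makes the dual box factor as a coordinate rectangle, so that the phase can be controlled in all $d+1$ variables with a single choice of $c$. Note that the hypotheses ``$q\leq p$'' and ``$q>4$'' are not used in the construction per se---they only enter to ensure $p\geq (q/3)'$, equivalently $1-\tfrac3q-\tfrac1p\geq 0$, so that the exponent on $l_d$ is nonnegative and this fat Knapp bound is a genuinely nontrivial lower bound rather than one subsumed by the cases treated in Lemma~\ref{L:q>p neg} and by the Kakeya-type lower bound of Lemma~\ref{L:Kakeya}.
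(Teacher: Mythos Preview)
Your proof is correct and follows essentially the same approach as the paper: both use the Knapp example on the full rectangle $Q^\ell$ and its dual box. The paper simply cites the computation \eqref{E:q>p neg} (with $j=d-1$) already carried out in Lemma~\ref{L:q>p neg} and observes that it rearranges to \eqref{E:q leq p d=1}, whereas you redo the same computation explicitly with characteristic functions; the underlying example and arithmetic are identical.
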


\begin{proof}
We assume $q>4$ and $p \geq q$.  We use the same $\phi^\ell$ from the proof of Lemma~\ref{L:q>p neg}, and inequality \eqref{E:q>p neg}, which can be rearranged into \eqref{E:q leq p d=1}, remains valid.
\end{proof}

\begin{lemma} \label{L:Kakeya}
The lower bound \eqref{E:q leq p d>1} from Theorem~\ref{T:neg} is valid in the range $q \leq p$, $q \leq 4$.  Moreover, $\|\scriptE^\ell_g\|_{L^p \to L^q}^{\rm{RWT}} = \infty$ whenever $q \leq \tfrac{2(d+1)}d$.  
\end{lemma}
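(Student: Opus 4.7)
By Lemma~\ref{L:Morse} and the parabolic scale invariance built into Definition~\ref{D:elliptic}, I first reduce to $g(\xi)=|\xi|^2$ on $Q^\ell$, at the cost of only a multiplicative constant.  Both assertions then follow from Kakeya-type superpositions of Knapp caps.

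\textbf{Lower bound \eqref{E:q leq p d>1}.}  Take $\tau_0 := [-l_1,l_1]\times\cdots\times[-l_j,l_j]\times[-l_{j+1},l_{j+1}]^{d-j}$, the Knapp cap already used in Lemma~\ref{L:q>p neg}, and form lattice translates $\tau_{\vec k}:=\tau_0+2l_{j+1}(0,\ldots,0,k_{j+2},\ldots,k_d)\subseteq Q^\ell$ indexed by $k_i\in\{1,\ldots,\lfloor l_i/(2l_{j+1})\rfloor\}$; restricting to $k_d$ alone gives $N:=\lfloor l_d/(2l_{j+1})\rfloor$ copies.  For $g(\xi)=|\xi|^2$, the modulation identity $\scriptE^\ell_g\chi_{\tau_{\vec k}}(t,x)=e^{i(tg(\xi^{(\vec k)})+x\cdot\xi^{(\vec k)})}\scriptE\chi_{\tau_0}(t,x+2t\xi^{(\vec k)})$ shows that the dual tubes $T_{\vec k}$ are mutually sheared in the $(t,x_{j+2},\ldots,x_d)$-plane and form a Besicovitch-type configuration concentrating at the origin.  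With $f:=\sum_{\vec k}\epsilon_{\vec k}\chi_{\tau_{\vec k}}$ and random signs, Khintchine--Kahane yields $\mathbb E\|\scriptE^\ell_g f\|_q^q \sim |\tau_0|^q\int M_N(t,x)^{q/2}\,dt\,dx$ with $M_N(t,x)\sim \prod_{i=j+2}^d\min(l_i/l_{j+1},(l_{j+1}^2|t|)^{-1})$; inserting this into the RWT definition together with $\|f\|_p=(N|\tau_0|)^{1/p}$ produces \eqref{E:q leq p d>1}.  The factor $\alpha(r)$ emerges as a positive power of $r$ for $q>4$, as a positive power of $\log r$ at $q=4$ (from the harmonic-sum $\sum_{k\neq k'}|k-k'|^{-1}\sim N\log N$), and as a positive power of $r$ for $q<4$ (either from the multi-directional overlap of $M_N$ or from the focal-point phase-alignment refinement of \cite{BCSS}); in every case $\alpha(r)\to\infty$ as $r\to\infty$.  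Reconciling $q=\tfrac{2(d-j-\theta+1)}{d-j-\theta}$ with the target exponent $(l_1\cdots l_jl_{j+1}^\theta)^{1-2/q}$ uses the same numerology already carried out in Lemma~\ref{L:q>p neg}.

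\textbf{Unboundedness when $q\leq\tfrac{2(d+1)}{d}$.}  The range $q>p$ is already treated by Lemma~\ref{L:q>p neg}, so I may assume $q\leq p$.  For $q<\tfrac{d+2}{d}p'$, a single Knapp cap of shrinking scale $\delta\to 0$ inside $Q^\ell$ produces a divergent ratio $\delta^{d/p'-(d+2)/q}$ by elementary parabolic scaling.  For the remaining Stein--Tomas band $\tfrac{d+2}{d}p'\leq q<\tfrac{2(d+1)}{d}$, applying the multi-directional Kakeya construction above to sub-caps of shrinking scale $\delta\to 0$ inside $Q^\ell$ produces a ratio $\delta^{d-(2d+2)/q}$ that diverges; the endpoint $q=\tfrac{2(d+1)}{d}$ is covered by the logarithmic refinement of the same Kakeya integral, which is the original BCSS argument.

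\textbf{Main obstacle.}  The technical heart is the sharp evaluation of $\int M_N^{q/2}$ so that the growth in $N=l_d/l_{j+1}$ produces the desired blow-up $\alpha\to\infty$ uniformly across $q\leq 4$.  For $q=4$ this reduces to a classical harmonic-sum estimate; for $q<4$ a naive random-sign construction with $k_d$-only translates saturates at $\alpha\sim 1$, forcing one to either introduce the multi-directional shear (whose effect is visible in the product formula for $M_N$) or align phases at an intermediate focal point of scale $l_{j+1}^{-2}/M^*$ with $M^*\leq N$ optimized against the trade-off between focal-set size and number of coherent contributions.  The remaining bookkeeping matching the cap shape $(l_1,\ldots,l_j,l_{j+1},\ldots,l_{j+1})$ against the Knapp exponent $(l_1\cdots l_jl_{j+1}^\theta)^{1-2/q}$ is elementary and mirrors the arithmetic already done in Lemma~\ref{L:q>p neg}.
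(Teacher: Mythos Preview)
Your approach shares the Khintchine-plus-square-function skeleton with the paper's proof, but the decisive step---forcing the dual tubes to overlap by a factor tending to infinity---is missing.  Your test function $f=\sum_{\vec k}\epsilon_{\vec k}\chi_{\tau_{\vec k}}$ carries \emph{no modulations}, so the tubes $T_{\vec k}$ sit in their natural positions and form only a bush through the origin.  As you yourself note, for $q<4$ this bush ``saturates at $\alpha\sim 1$,'' and neither of the proposed remedies is justified: the ``multi-directional shear'' is still a bush (now in several transverse directions) and bushes have essentially disjoint tubes away from the focal point, so $\int M_N^{q/2}$ does not beat $\prod_i N_i\cdot|T|$ by a factor tending to infinity for $q<4$; the ``focal-point phase alignment'' is not defined.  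The paper resolves this by inserting modulations $e^{-i(t_\kappa,x_\kappa)\cdot(g(\xi),\xi)}$ and choosing $(t_\kappa,x_\kappa)$ via Fefferman's Besicovitch-set construction, producing a genuine Kakeya compression $|\bigcup_\kappa (T_\kappa+(t_\kappa,x_\kappa))|=o_N(1)\sum_\kappa|T_\kappa|$.  The paper also uses a two-scale decomposition $Q^\ell\supset R\supset\kappa$: within each $R$ the $N$ sub-caps $\kappa$ (stacked in the $\xi_d$-direction only) give the Kakeya tubes, and the $R$'s themselves are sent to widely separated space-time locations so that the $L^q$ norms decouple.  Your single-scale family $\{\tau_{\vec k}\}$ does not separate these two roles.

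There is a second gap in the unboundedness argument.  You write ``the range $q>p$ is already treated by Lemma~\ref{L:q>p neg},'' but that lemma only produces a \emph{finite} lower bound on the scaling line; it does not yield $\|\scriptE^\ell_g\|_{L^p\to L^q}^{\rm{RWT}}=\infty$.  The critical point $q=\tfrac{2(d+1)}{d}$, $p=2$ lies on the scaling line with $q>p$, and there the Knapp example is useless---one needs the Kakeya argument.  The paper handles this by rescaling to $\ell=\1$, covering $Q^{\1}$ by $1/N$-cubes grouped into strips of length $1$, and running the same Fefferman-Kakeya construction to obtain $\|\scriptE_g^{\1}\|^{\rm{RWT}}_{L^p\to L^q}\gtrsim o_N(1)^{-1}N^{2(d+1)/q-d}$, which diverges for every $q\leq\tfrac{2(d+1)}{d}$.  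Your shrinking-$\delta$ single-cap argument only recovers $q<\tfrac{d+2}{d}p'$, and the ``multi-directional Kakeya'' you invoke for the remaining band has the same defect as above.
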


\begin{proof}
We begin with the case $q=\tfrac{2(d-j-\theta+1)}{d-j-\theta}$ and $p \geq q$ for some $0 \leq j \leq d-2$ and $0 < \theta \leq 1$.    We will argue by adapting the Kakeya-like argument of \cite{BCSS}.  

Let $N \gg 1$.  Assume that $\frac{l_d}{l_{j+1}} > N^3$.  By parabolic rescaling, we may assume that $l_{j+1} = \tfrac1N$.  We cover (at least half of) $Q^\ell$ by pairwise disjoint rectangles $R \in \scriptR$ congruent to $Q^{(l_1,\ldots,l_j,1/N,\ldots,1/N,1)}$ and further decompose each $R$ into a disjoint union of rectangles $\kappa \in \scriptK_R$, each congruent to $Q^{(l_1,\ldots,l_j,1/N,\ldots,1/N)}$.  By the usual Knapp argument,
$$
|\scriptE^\ell_g \chi_\kappa(t,x)| \gtrsim |\kappa|
$$
on a tube $T_\kappa$ of volume $|T_\kappa| \sim |\kappa|^{-1} l_{j+1}^{-2}$.  We will prove that for each $R$, there exist $\{(t_\kappa,x_\kappa)\}_{\kappa \in \scriptK_R}$ such that
\begin{equation} \label{E:Kakeya}
|\bigcup_{\kappa \in \scriptK_R} T_\kappa+(t_\kappa,x_\kappa)| < o_N(1)\#\scriptK_R|T_\kappa|, 
\end{equation}
where $o_N(1) \to 0$ as $N \to \infty$ and is otherwise independent of $\ell$.  

Define 
$$
F:=\sum_{R \in \scriptR} e^{-i(t_R,x_R)(g(\xi),\xi)}\sum_{\kappa \in \scriptK_R} \omega_\kappa e^{-i(t_\kappa,x_\kappa)(g(\xi),\xi)}\chi_\kappa,
$$
with $\{(t_R,x_R)\}_{R \in \scriptR} \subseteq \R^{1+d}$ and $\{\omega_R\}_{\kappa \in \scriptK_R} \subseteq \{\pm1\}$ to be determined shortly.  Of course, $|F| \lesssim \chi_{Q^\ell}$.  For the $(t_R,x_R)$ sufficiently widely spaced (depending on the $(t_\kappa,x_\kappa)$), the $L^{q,\infty}$ norms decouple:  
$$
\|\scriptE^\ell_g F\|_{L^{q,\infty}} \gtrsim \bigl(\sum_{R \in \scriptR} \|\scriptE^\ell_g F_R\|_{L^{q,\infty}}^q\bigr)^{\frac1q}, \qquad F_R:=\sum_{\kappa \in \scriptK_R} \omega_\kappa e^{-i(t_\kappa,x_\kappa)(g(\xi),\xi)}\chi_\kappa.
$$

By Khintchine's inequality, we may choose the $\omega_\kappa$ such that
$$
\|\scriptE^\ell_g F_R\|_{L^{q,\infty}} \gtrsim \|\bigl(\sum_{\kappa \in \scriptK_R} |\scriptE^\ell_g F_\kappa|^2\bigr)^{\frac12}\|_{L^{q,\infty}}, \qquad F_\kappa:= e^{-i(t_\kappa,x_\kappa)(g(\xi),\xi)}\chi_\kappa.
$$
Applying our pointwise lower bound on $\scriptE^\ell_g \chi_\kappa$, H\"older's inequality, and \eqref{E:Kakeya},
\begin{align*}
&\|\bigl(\sum_{\kappa \in \scriptK_R} |\scriptE^\ell_g F_\kappa|^2\bigr)^{\frac12}\|_{L^{q,\infty}} 
\gtrsim |\kappa|\|\sum_{\kappa \in \scriptK_R} \chi_{T_\kappa+(t_\kappa,x_\kappa)}\|_{L^{\frac q2,\infty}}^{\frac12}\\
&\qquad \gtrsim |\kappa||\bigcup_{\kappa \in \scriptK_R} T_\kappa+(t_\kappa,x_\kappa)|^{\frac1q-\frac12}\|\sum_\kappa \chi_{T_\kappa+(t_\kappa,x_\kappa)}\|_{L^1}^{\frac12}
\gtrsim |\kappa| o_N(1)^{-1} (\#\scriptK_R|T_\kappa|)^{\frac1q}.
\end{align*}
It takes a little arithmetic to put the pieces together:
\begin{align*}
\|\scriptE^\ell_g\|_{L^p \to L^q}^{\rm{RWT}} &\gtrsim \frac{\|\scriptE^\ell_g F\|_{q,\infty}}{|Q^\ell|^{\frac1p}} \gtrsim |Q^\ell|^{-\frac1p}|\kappa| o_N(1)^{-1}(|Q^\ell| |\kappa|^{-2} \ell_{j+1}^{-2})^{\frac1q}\\
&\sim o_N(1)^{-1}|Q^\ell|^{\frac1q-\frac1p} (l_1 \cdots l_j l_{j+1}^\theta)^{1-\frac2q}.
\end{align*}
Thus the lemma is proved modulo the Kakeya-like inequality \eqref{E:Kakeya}.  

For $\xi \in R$, we use Taylor's theorem to estimate
\begin{align*}
g(\xi)&= g(\xi_R) + (\xi-\xi_R)\cdot\nabla g(\xi_R) + \tfrac12 (\xi-\xi_R)^T D^2 g(\xi_R) (\xi-\xi_R)\\
&\qquad + O(\sum_{|\alpha|=3} |(\xi-\xi_R)^\alpha|\|\partial^\alpha g\|_{C^0(Q^\ell)}),
\end{align*}
where $\xi_R$ denotes the center of $R$.  We examine the error term.  If $\alpha_d=0$ and $\alpha_i \neq 0$,
$$
|(\xi-\xi_R)^\alpha|\|\partial^\alpha g\|_{C^0(Q^\ell)} \leq \tfrac1{N^2} \ell_i \|\partial_i D^2 g\|_{C^0(Q^\ell)} < \tfrac{\eps_0}{N^2},
$$
since $|(\xi-\xi_R)_k| < \min\{\frac1N,l_k\}$, $1 \leq k < d$.  If $\alpha_d\neq 0$,
$$
|(\xi-\xi_R)^\alpha|\|\partial^\alpha g\|_{C^0(Q^\ell)} \leq \tfrac1{N^2}l_d\|\partial_d D^2 g\|_{C^0(Q^\ell)} < \tfrac{\eps_0}{N^2},
$$
since $|\xi-\xi_R| \leq 1 < \tfrac1{N^2}l_d$.  Take $\xi \in \kappa \subseteq R$ and let $\xi_\kappa$ denote the center of $\kappa$.  From the preceding and the definition of $\kappa$, for $\xi \in \kappa$,
$$
g(\xi) = (\xi-\xi_\kappa) \cdot (\nabla g(\xi_R) + D^2g(\xi_R)(\xi_\kappa-\xi_R)) + c(\kappa) + O(\tfrac1{N^2}),
$$
where $c(\kappa)$ is independent of $\xi$.  

By construction, $\xi_\kappa - \xi_R = \tfrac{n_\kappa}N e_d$, some $n_\kappa \in \{-(N-1),\ldots,0,\ldots,N-1\}$.  Thus for $|t| < cN^2$, $c$ sufficiently small,
$$
(t,x)(g(\xi)-c(\kappa),\xi-\xi_\kappa) = c \, O(1) + (\xi-\xi_\kappa)(t\nabla g(\xi_R) + t\tfrac{n_\kappa}N \partial_d \nabla g(\xi_R) + x).
$$
We therefore see that
$$
|\scriptE_g^\ell \chi_\kappa(t,x)| \gtrsim |\kappa|
$$
on
$$
T_\kappa:=\{(t,x): |t|<cN^2, \: |t\partial_i g(\xi_R) + t\tfrac{n_\kappa}N \partial_d \partial_i g(\xi_R) + x_i| < c L_i\}
$$
where $L_i =  \ell_i^{-1}, \qtq{if} i \leq j$, and $L_i = N$ if $i > j$.

The linear transformation
$$
A_R(t,s,y) := (t,-t\nabla g(\xi_R) - s\partial_d \nabla g(\xi_R)+(y,0)), \qquad (t,s,y) \in \R^{1+1+(d-1)}
$$
has determinant $|\det A_R| =  \partial_d^2g(\xi_R) \sim 1$ and maps
$$
T_\kappa^\flat:=\{(t,s,y): |t|<cN^2, \: |s-t\tfrac{n_\kappa}{N}| < cN, |y_i|<cL_i\} 
$$
onto $T_\kappa$, recalling that constants are allowed to change from line to line.  Using (e.g.) Fefferman's construction \cite{ball_non_mult}, there exist $\{(t_\kappa,s_\kappa)\}_{\kappa\in\scriptK_R}$ such that
$$
|\bigcup_{\kappa \in \scriptK_R} T_\kappa^\flat + (t_\kappa,s_\kappa,0)| < o_N(1)\sum_{\kappa \in \scriptK_R}|T_\kappa^\flat|.
$$
Finally, $(t_\kappa,x_\kappa)$ with
$$
x_\kappa:=-t_\kappa \nabla g(\xi_R) - s_\kappa \partial_d \nabla g(\xi_R)
$$
gives \eqref{E:Kakeya}.  

The proof  of the necessity of $q > \tfrac{2(d+1)}d$ is similar.  By rescaling and using monotonicity in $\ell$ of the operator norms, we may assume that $\ell = \1$.  We cover $Q^\1$ by rectangles congruent to $Q^{(1/N,\ldots,1/N,1)}$ and cover these by smaller cubes $\kappa$ congruent to $Q^{(1/N,\ldots,1/N)}$.  Following the argument above, 
$$
\|\scriptE_g^\1\|_{L^p \to L^q}^{\rm{RWT}} \gtrsim o_N(1)^{-1}N^{\frac{2(d+1)}q-d},
$$
which is unbounded as $N \to \infty$ for any $q \leq \frac{2(d+1)}d$.  
\end{proof}

\section{Upper bounds above rectangles} \label{S:pos}

In this section, we provide details for the deduction of Theorem~\ref{T:elliptic thm} from known results and prove Theorem~\ref{T:pos}. By Fatou's lemma, it suffices to prove these results when $\ell \in (0,\infty)^d$.  We recall the hypothesis that $l_1 \leq \cdots \leq l_d$.  

We begin by collecting from the literature the details needed to establish Theorem~\ref{T:elliptic thm}.  We will apply Theorem~1.4 of \cite{Candy} to obtain a bilinear restriction result.  For the convenience of the reader, we record that, in the notation of \cite{Candy}, we are using: $q=r>\tfrac{d+3}{d+1}$, $\scriptH_1 = \scriptH_2 = \scriptV_{max} = 1$, and $\mathbf{d_0}$ is the ball of radius $\min\{1,l_1\}$, $\mathbf d[\cdot,\cdot]$ bounds the measures of certain ``intersection hypersurfaces'' in $\R^d$ (see (1.10)).  

\begin{lemma}[\cite{Candy}] \label{L:Candy}
Let $g$ be elliptic over the rectangle $Q^\ell$, with $l_d \geq 1$, and let $B_1,B_2$ be two balls of radius 1, separated by a distance 1 and intersecting $Q^\ell$.  For functions $f_j \in L^2(Q^\ell)$, $\supp f_j \subseteq B_j$ and $q > \tfrac{2(d+3)}{d+1}$,
$$
\|\scriptE^\ell_g f_1\,\scriptE^\ell_g f_2\|_{\frac q2} \lesssim \|f_1\|_2\|f_2\|_2.  
$$
\end{lemma}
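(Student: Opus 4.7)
The plan is to apply Candy's Theorem~1.4 in \cite{Candy} directly, using the parameters already identified in the paragraph preceding the lemma: $q=r>\tfrac{d+3}{d+1}$, $\scriptH_1=\scriptH_2=\scriptV_{max}=1$, and $\mathbf{d_0}$ the ball of radius $\min\{1,l_1\}$. The substantive analytic content is all in Candy's paper; the work here is to verify that our rectangular elliptic setting fits into that general framework with these universal constants. Fatou's lemma permits us to assume $l_d<\infty$ and approximate if needed, so we may treat $g$ as a fixed smooth function on $Q^\ell$.

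First, I would verify the curvature bounds $\scriptH_1=\scriptH_2=1$. Ellipticity of $g$ gives $D^2 g=2\mathbb{I}_d+D^2 h$ with $\|D^2 h\|_{C^0(Q^\ell)}<\eps_0\leq\tfrac12$, so the principal curvatures of the graph of $g$ are bounded above and below by universal constants on all of $Q^\ell$, and in particular on $B_1\cap Q^\ell$ and $B_2\cap Q^\ell$. This gives the required two-sided control of the second fundamental form needed to conclude that the relevant curvature parameters in \cite{Candy} are of order one.

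Second, I would verify the transversality bound $\scriptV_{max}=1$ (equivalently, a uniform upper bound on the intersection hypersurface quantity $\mathbf{d}[\cdot,\cdot]$). The upward normal to $\Sigma_g$ at $(g(\xi),\xi)$ is proportional to $(-1,\nabla g(\xi))$, with $\nabla g(\xi)=2\xi+\nabla h(\xi)$. Since $\nabla h(0)=0$ and $\|D^2 h\|_{C^0(Q^\ell)}<\eps_0$, the fundamental theorem of calculus forces $\nabla h$ to be small on $Q^\ell$. Hence for $\xi\in B_1\cap Q^\ell$ and $\xi'\in B_2\cap Q^\ell$ we have $|\nabla g(\xi)-\nabla g(\xi')|\geq 2\dist(B_1,B_2)-O(\eps_0)\gtrsim 1$, yielding uniform transversality between the two pieces of the surface independent of $\ell$.

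Third, I would check that the localization scale $\mathbf{d_0}=\min\{1,l_1\}$ causes no degradation. When $l_1\geq 1$ we are at the standard unit scale, and Candy's theorem applies at scale $1$ as in the classical Tao--Vargas--Vega setting. When $l_1<1$ the rectangle is thin in the first coordinate direction; Candy's theorem can be localized at the anisotropic scale $l_1$, and since the curvature and transversality bounds above are dimensionless, they remain uniform at this smaller scale and no loss is incurred. The main obstacle, such as it is, is essentially bookkeeping: one must confirm that each geometric quantity appearing in \cite{Candy} is bounded by an absolute constant in our rectangular elliptic setting, independent of $\ell$ (given $l_d\geq 1$). Once that is done, Candy's theorem outputs exactly the bilinear estimate claimed.
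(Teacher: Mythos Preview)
Your approach matches the paper's: the lemma is obtained by directly applying Theorem~1.4 of \cite{Candy} with the parameters listed just before the lemma statement, and the paper gives no further argument beyond recording those parameters. Your verification of the curvature and transversality hypotheses is more explicit than anything the paper writes down.

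One correction is needed in your transversality step. The claim that ``$\nabla h$ is small on $Q^\ell$'' is false in general: from $\nabla h(0)=0$ and $\|D^2 h\|_{C^0}<\eps_0$ the fundamental theorem of calculus only gives $|\nabla h(\xi)|\leq\eps_0|\xi|$, which can be as large as $\eps_0 l_d$. What you actually need, and what does follow from $\|D^2 h\|_{C^0}<\eps_0$ via the mean value theorem, is that the \emph{difference} satisfies $|\nabla h(\xi)-\nabla h(\xi')|\leq\eps_0|\xi-\xi'|\lesssim\eps_0$ for $\xi\in B_1$, $\xi'\in B_2$ (these balls have radius $1$ and separation $1$, so $|\xi-\xi'|\lesssim 1$). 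Combined with $|2(\xi-\xi')|\geq 2\,\dist(B_1,B_2)=2$, this yields $|\nabla g(\xi)-\nabla g(\xi')|\gtrsim 1$, which is the transversality bound you want. With this fix your sketch is sound.
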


Using the invariance of the ellipticity hypothesis under parabolic rescaling, we obtain a bilinear restriction estimate for functions supported on balls of radius $r$, separated by a distance $r$, which leads to Theorem~\ref{T:elliptic thm} via the bilinear-to-linear argument of Tao--Vargas--Vega \cite{TVV}.  We omit the details.

We now turn to the proof of Theorem~\ref{T:pos}, beginning with bounds corresponding directly to lower-dimensional restriction theorems.  

\begin{lemma}[\cite{DruryGuo1993}] \label{L:DG}
Theorem~\ref{T:pos} holds on each half-open segment $q = \tfrac{d-j+2}{d-j}p'$, $p < q$, $j=1,\ldots,d$. In particular \eqref{E:q > p} holds unconditionally on the segment $q=3p'$, $q > p$. Moreover, validity of Conjecture~\ref{C:elliptic conj} in $d-j$ dimensions with exponents $(p,q)$ implies Conjecture~\ref{C:rect conj} in dimension $d$ with the same exponent pair.  
\end{lemma}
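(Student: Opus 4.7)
The plan is to exhibit $\scriptE_g^\ell$ as a partial Fourier transform in the short directions of a family of lower-dimensional extension operators on slices, and then to invoke the $(d-j)$-dimensional hypothesis on each slice. Split coordinates $\xi = (\xi', \xi'')$ and $x = (x', x'')$ with $\xi', x' \in \R^j$, and write $\ell' = (l_1, \dots, l_j)$, $\ell'' = (l_{j+1}, \dots, l_d)$. For each $\xi' \in Q^{\ell'}$, set $g^{\xi'}(\xi'') := g(\xi', \xi'')$. By Fubini,
\begin{equation*}
\scriptE_g^\ell f(t, x', x'') = \int_{Q^{\ell'}} e^{i x' \cdot \xi'}\, \scriptE_{g^{\xi'}}^{\ell''}[f(\xi', \cdot)](t, x'')\, d\xi'.
\end{equation*}
First, I would use Lemma~\ref{L:slicing} to see that every slice $g^{\xi'}$ is elliptic over $Q^{\ell''}$ in $d-j$ dimensions, with parameters bounded (uniformly in $\xi'$) by admissible multiples of the parameters of $g$. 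The hypothesized Conjecture~\ref{C:elliptic conj} in $d-j$ dimensions at $(p,q)$ (which lies on its critical line $q = \tfrac{(d-j)+2}{d-j}p'$) then yields $\|\scriptE_{g^{\xi'}}^{\ell''}\|_{L^p \to L^q} \lesssim 1$, uniformly in $\xi'$ and $\ell$.

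Next I would apply Hausdorff--Young in $x'$ followed by Minkowski's integral inequality. For each fixed $(t,x'')$, the map $x' \mapsto \scriptE_g^\ell f(t,x',x'')$ is the Fourier transform of the compactly supported integrand $\chi_{Q^{\ell'}}(\xi')\,\scriptE_{g^{\xi'}}^{\ell''}[f(\xi',\cdot)](t,x'')$. Since on this segment $q \geq \tfrac{2(d-j+1)}{d-j} \geq 2$, Hausdorff--Young gives
\begin{equation*}
\| \scriptE_g^\ell f(t, \cdot, x'') \|_{L^q(\R^j)} \leq \biggl( \int_{Q^{\ell'}} \bigl| \scriptE_{g^{\xi'}}^{\ell''}[f(\xi', \cdot)](t, x'') \bigr|^{q'} d\xi' \biggr)^{1/q'}.
\end{equation*}
Raising to the $q$-th power, integrating in $(t,x'')$, and applying Minkowski (valid since $q \geq q'$) to move the $\xi'$-integration outside, then inserting the slice bound, produces
\begin{equation*}
\| \scriptE_g^\ell f \|_{L^q(\R^{1+d})} \lesssim \biggl( \int_{Q^{\ell'}} \| f(\xi', \cdot) \|_{L^p(Q^{\ell''})}^{q'} d\xi' \biggr)^{1/q'}.
\end{equation*}

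Finally, the relation $q = \tfrac{d-j+2}{d-j}p' > p'$ forces $q' < p$, so Hölder's inequality on $Q^{\ell'}$ (with conjugate exponents $p/q'$ and its dual) converts the right-hand side into $|Q^{\ell'}|^{1/q'-1/p} \|f\|_{L^p(Q^\ell)} = (l_1 \cdots l_j)^{1/p'-1/q} \|f\|_{L^p(Q^\ell)}$, which is precisely \eqref{E:q > p} on the segment. The unconditional conclusion for $q = 3p'$ corresponds to $j = d-1$ and invokes only the classical one-dimensional elliptic restriction theorem of Fefferman--Stein and Zygmund. The only point requiring genuine care is the uniform preservation of ellipticity constants through slicing (so that a single slice-level operator norm can be used), which is exactly what Lemma~\ref{L:slicing} supplies; I do not anticipate a substantive obstacle beyond this bookkeeping.
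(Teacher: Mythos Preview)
Your proposal is correct and follows essentially the same route as the paper: split off the $j$ short directions, view the extension as a partial Fourier transform of slice extensions, apply Hausdorff--Young in $x'$, Minkowski to swap $\xi'$ and $(t,x'')$, the $(d-j)$-dimensional hypothesis on each slice, and finally H\"older in $\xi'$. The only cosmetic difference is that you invoke Lemma~\ref{L:slicing} to guarantee uniform ellipticity of the slices, whereas the paper simply subtracts the affine part $g_{\xi'}(0'') + \xi'' \cdot \nabla'' g_{\xi'}(0'')$ directly; these amount to the same normalization.
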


\begin{proof}
Because the lemma was not stated as such in \cite{DruryGuo1993}, we give the complete proof.  When $p=1,q=\infty$, the result is a direct application of H\"older's inequality.  Let $0 \leq j \leq d-1$, let $q=\tfrac{d-j+2}{d-j} p' > p > 1$, and assume that Conjecture~\ref{C:elliptic conj} is valid in dimension $d-j$ for this exponent pair.  Given $f \in C^\infty_0(Q^\ell)$, we take the Fourier transform of $\scriptE^\ell_g$ in the $x'$ variables to obtain,
$$
\scriptF_{x'} \scriptE^\ell_gf(t,x'')(\xi') = \scriptE^{\ell''}_{g_{\xi'}}f_{\xi'}(t,x''),
$$
where we have split the coordinates as $x=(x',x'') \in \R^j\times\R^{d-j}$, and we are writing $h_{\xi'}(\xi'') = h(\xi)$, for $h$ a function on $\R^d$.  After making a linear transformation, which amounts to replacing $g_{\xi'}(\xi'')$ with 
$$
g_{\xi'}(\xi'')-g_{\xi'}(0'')-\xi'' \cdot \nabla''g_{\xi'}(0''),
$$
we see that the hypothesized Conjecture~\ref{C:elliptic conj} applies uniformly to $\scriptE^{\ell''}_{g_{\xi'}}$, $\xi' \in Q^{\ell'}$.  

Now applying Hausdorff--Young (using $q>2$), then Minkowski's inequality (using $q'<q$), then the hypothesized validity of Conjecture~\ref{C:elliptic conj}, and finally H\"older's inequality (using $p > q'$),
\begin{align*}
\|\scriptE^\ell_g f\|_q &\leq \|\scriptE^{\ell''}_{g_{\xi'}}f_{\xi'}\|_{L^q_{t,x}(L^{q'}_{\xi'})} \leq \|\scriptE^{\ell''}_{g_{\xi'}}f_{\xi'}\|_{L^{q'}_{\xi'}(L^{q}_{t,x''})}  \lesssim \|f\|_{L^{q'}_{\xi'}(L^p_{\xi''})}\\
& \leq |Q^{\ell'}|^{\frac1{q'}-\frac1p}\|f\|_p = (l_1\cdots l_j)^{\frac1{p'}-\frac1q}\|f\|_p.
\end{align*}
This inequality is equivalent to \eqref{E:q > p} in the case $\theta=0$ (and in the case $j-1, \theta=1$).  
\end{proof}

\begin{lemma} \label{L:interstices}
Theorem~\ref{T:pos} holds on the region $\frac{d+2}d p' \leq q \leq 3p'$, $q>p$, in the sense that validity of Conjecture~\ref{C:elliptic conj} in $d-j-i$ dimensions with exponents $(p_i,q_i)$, $i=0,1$, implies validity of Conjecture~\ref{C:rect conj} in $d$ dimensions, for $(p^{-1},q^{-1})$ on the line segment joining $(p_0^{-1},q_0^{-1})$ and $(p_1^{-1},q_1^{-1})$.  
\end{lemma}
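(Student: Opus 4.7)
The plan is to interpolate (via the Riesz--Thorin theorem applied to the fixed operator $\scriptE^\ell_g$) between the two endpoint bounds supplied by Lemma~\ref{L:DG}. Indeed, Lemma~\ref{L:DG} converts the two hypotheses (the elliptic conjecture in dimensions $d-j$ and $d-j-1$, at the respective exponent pairs) directly into
\begin{align*}
\|\scriptE^\ell_g\|_{L^{p_0}\to L^{q_0}}&\lesssim (l_1\cdots l_j)^{1/p_0'-1/q_0},\\
\|\scriptE^\ell_g\|_{L^{p_1}\to L^{q_1}}&\lesssim (l_1\cdots l_{j+1})^{1/p_1'-1/q_1}.
\end{align*}
For $(p^{-1},q^{-1})=(1-\tau)(p_0^{-1},q_0^{-1})+\tau(p_1^{-1},q_1^{-1})$ with $\tau\in[0,1]$, Riesz--Thorin together with the fact that $1/p'-1/q$ is linear in $\tau$ yields
\[
\|\scriptE^\ell_g\|_{L^p\to L^q}\lesssim (l_1\cdots l_j)^{1/p'-1/q}\,l_{j+1}^{\tau(1/p_1'-1/q_1)}.
\]

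To match the target bound $(l_1\cdots l_jl_{j+1}^{\theta})^{1/p'-1/q}$ from Conjecture~\ref{C:rect conj}, where $\theta$ is defined via $q=\frac{d-j-\theta+2}{d-j-\theta}p'$, the task reduces to the identity $\tau(1/p_1'-1/q_1)=\theta(1/p'-1/q)$. Setting $\kappa_i=d-j-i$ and $\kappa^*=d-j-\theta$, the respective critical relations translate into $1/p_i'-1/q_i=2/(\kappa_iq_i)$ and $\kappa^*(1/p'-1/q)=2/q$. Substituting the Riesz--Thorin expansions
\[
1/p'-1/q=(1-\tau)\tfrac{2}{\kappa_0q_0}+\tau\tfrac{2}{\kappa_1q_1},\qquad 2/q=(1-\tau)\tfrac{2}{q_0}+\tau\tfrac{2}{q_1},
\]
and using $\kappa_0-\kappa_1=1$, a short calculation reduces $\kappa_0(1/p'-1/q)-2/q$ to $\tau\cdot 2/(\kappa_1q_1)=\tau(1/p_1'-1/q_1)$. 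On the other hand, $\kappa^*(1/p'-1/q)=2/q$ rearranges directly into $\kappa_0(1/p'-1/q)-2/q=\theta(1/p'-1/q)$, closing the identity.

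Finally, I will confirm that the entire interpolation segment lies in the relevant portion of the conjecture's range, i.e., that $\theta\in[0,1]$ and $q>p$ throughout. In the Riesz diagram $(1/p,1/q)$, the critical curves $q=\frac{\kappa+2}{\kappa}p'$ are straight lines through $(1,0)$ whose slopes are monotone in $\kappa$, so the wedge bounded by the $\kappa_0$- and $\kappa_1$-lines is convex and contains the segment joining our endpoints; accordingly, $\kappa^*$ varies monotonically from $\kappa_0$ to $\kappa_1$ along the segment and $\theta=\kappa_0-\kappa^*\in[0,1]$. The constraint $q>p$ is preserved by convexity. The one nontrivial ingredient in this plan is the algebraic identity in the second paragraph; once it is in hand, the rest of the argument is routine bookkeeping.
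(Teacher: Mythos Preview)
Your proposal is correct and follows essentially the same approach as the paper: invoke Lemma~\ref{L:DG} at the two endpoints, interpolate, and verify the algebraic identity matching the interpolated exponent on $l_{j+1}$ to the one demanded by \eqref{E:q > p}. Your derivation of the identity via the substitutions $\kappa_i=d-j-i$ and $1/p_i'-1/q_i=2/(\kappa_i q_i)$ is a slightly different (and arguably cleaner) presentation of the same computation the paper carries out in equations~\eqref{E:nu ok?}--\eqref{E:nu ok2}, and your closing paragraph confirming $\theta\in[0,1]$ and $q>p$ along the segment is a welcome addition that the paper leaves implicit.
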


\begin{proof}
The argument is by the obvious interpolation.  By our hypothesis and Lemma~\ref{L:DG}, 
$$
\|\scriptE^\ell_g\|_{L^{p_i} \to L^{q_i}} \lesssim (l_1 \cdots l_j l_{j+1}^i)^{\frac1{p_i'}-\frac1{q_i}}, \qquad i=0,1.
$$
Setting 
$$
\tfrac1{p_\theta}:=\tfrac{1-\theta}{p_0}+\tfrac\theta{p_1}, \qquad \tfrac1{q_\theta} :=\tfrac{1-\theta}{q_0}+\tfrac\theta{q_1},
$$
as usual, interpolation gives
$$
\|\scriptE^\ell_g\|_{L^{p_\theta} \to L^{q_\theta}} \lesssim (l_1\cdots l_j)^{\frac1{p_\theta'}-\frac1{q_\theta}}l_{j+1}^{\frac\theta{p_1'}-\frac\theta{q_1}}.
$$
Inequality \eqref{E:q > p} in the claimed region thus follows once we prove that the equation
\begin{equation} \label{E:nu ok?}
\nu(\tfrac1{p_\theta'}-\tfrac1{q_\theta}) = \theta(\tfrac1{p_1'}-\tfrac1{q_1})
\end{equation}
is valid for the quantity $\nu=\nu_\theta$ defined implicitly by
$$
q_\theta =: \tfrac{d-j-\nu+2}{d-j-\nu} p_\theta'.
$$
(In other words, $\nu$ is the `$\theta$' from \eqref{E:q > p}.)  
Indeed, taking the convex combination of the scaling equations for $(p_0,q_0)$ and $(p_1,q_1)$ yields
\begin{equation} \label{E:nu ok1}
\tfrac{d-j}{p_\theta'} - \tfrac\theta{p_1'} = \tfrac{d-j+2}{q_\theta}-\tfrac\theta{q_1},
\end{equation} 
while the definition of $\nu$ can be rearranged as 
\begin{equation} \label{E:nu ok2}
\tfrac{d-j}{p_\theta'} - \tfrac\nu{p_\theta'} = \tfrac{d-j+2}{q_\theta}-\tfrac\nu{q_\theta},
\end{equation}
Subtracting \eqref{E:nu ok2} from \eqref{E:nu ok1} and rearranging yields \eqref{E:nu ok?}.  
\end{proof}

\begin{lemma} \label{L:q leq p}
In the region $\tfrac{2(d+1)}{d} < q\leq 4$, $q \leq p$, validity of Conjecture~\ref{C:elliptic conj} in dimensions $d-j-1$ and $d-j$ implies validity of Conjecture~\ref{C:rect conj} on the region $\frac{2(d-j+1)}{d-j}<q \leq \frac{2(d-j)}{d-j-1}$, for $0 \leq j \leq d-2$.  
\end{lemma}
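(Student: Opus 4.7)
First, by Hölder's inequality on the input domain $Q^\ell$, $\|f\|_{L^q(Q^\ell)} \leq |Q^\ell|^{1/q-1/p}\|f\|_{L^p(Q^\ell)}$ for $p\geq q$, giving
\[
\|\scriptE^\ell_g\|_{L^p \to L^q} \leq |Q^\ell|^{1/q-1/p}\|\scriptE^\ell_g\|_{L^q \to L^q}
\]
and supplying the factor $(l_1\cdots l_d)^{1/q-1/p}$ appearing in \eqref{E:q leq p d>1}. The problem thus reduces to establishing the diagonal bound $\|\scriptE^\ell_g\|_{L^q\to L^q} \lesssim_\eps (l_d/l_{j+1})^\eps (l_1\cdots l_j l_{j+1}^\theta)^{1-2/q}$ for $q = \tfrac{2(d-j-\theta+1)}{d-j-\theta}$, $\theta \in (0,1]$.

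The target range of $q$ is bounded by the two diagonal Stein--Tomas-type endpoints $q^*_0 = \tfrac{2(d-j+1)}{d-j}$ (the $\theta \to 0$ limit) and $q^*_1 = \tfrac{2(d-j)}{d-j-1}$ (at $\theta = 1$), which coincide with the diagonal exponents of the hypothesized $(d-j)$- and $(d-j-1)$-dimensional restriction conjectures. By complex interpolation of operator norms, it suffices to establish the required $L^q\to L^q$ bound at each of these endpoints, with an $\eps$-loss. At each endpoint $q^*_k$ ($k\in\{0,1\}$), I would parabolically rescale to normalize $l_{j+1}=1$ and then apply Lemma~\ref{L:DG} in the $(d-j-k)$-dimensional case at exponents $(p_0,q_0)$ lying on the line $q_0 = \tfrac{d-j-k+2}{d-j-k}p_0'$ and approaching $(q^*_k,q^*_k)$ from within the open region $q_0 > p_0$ of Conjecture~\ref{C:elliptic conj}, followed by Hölder's inequality on the input (valid since $p_0 \leq q_0$):
\[
\|\scriptE^\ell_g\|_{L^{q_0}\to L^{q_0}} \lesssim (l_1\cdots l_{j+k})^{1/p_0'-1/q_0}|Q^\ell|^{1/p_0-1/q_0}.
\]
As $(p_0,q_0) \to (q^*_k,q^*_k)$, the first factor tends to $(l_1\cdots l_{j+k})^{1-2/q^*_k}$, matching the target scaling in the ``short'' directions.

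The main obstacle is extracting the correct form of the bound with only an $(l_d/l_{j+1})^\eps$ loss: the factor $|Q^\ell|^{1/p_0-1/q_0} = (l_1\cdots l_d)^{1/p_0-1/q_0}$ carries positive powers of every sidelength, whereas the target tolerates only a subpolynomial loss in the ratio $l_d/l_{j+1}$. This is manageable directly when $d-j=2$ (so that there are no ``middle'' long directions), but otherwise must be handled by dyadically decomposing $Q^\ell$ along $\xi_{j+k+1},\ldots,\xi_d$ into sub-rectangles of unit length in the rescaled coordinates, applying the above estimate on each sub-rectangle, and aggregating over the $N \sim \prod_{i=j+k+1}^d (l_i/l_{j+1})$ pieces. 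Because the sub-rectangles correspond to frequency-disjoint pieces of the elliptic hypersurface, an orthogonality argument exploiting the transversality of these pieces (in the spirit of $\ell^2$-decoupling, which is available below the critical Stein--Tomas exponent) should aggregate the estimates with only a factor $N^\eps$ loss; re-parameterizing $\eps$ absorbs this into $(l_d/l_{j+1})^\eps$. The delicate point is avoiding the much larger loss $N^{1-1/q}$ that a naïve application of Jensen's inequality would produce; accomplishing this aggregation with subpolynomial loss is, in my view, the technical heart of the lemma.
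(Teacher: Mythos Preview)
Your reduction to the diagonal case via H\"older is correct and matches the paper's first step. However, the ``main obstacle'' you identify is not actually an obstacle, and the decoupling step you propose is unnecessary. When you approach $(q^*_k, q^*_k)$ along the scaling line, the exponent $\tfrac{1}{p_0}-\tfrac{1}{q_0}$ tends to zero. After rescaling so that $l_{j+1}=1$, the sidelengths satisfy $l_i\leq 1$ for $i\leq j$ and $1\leq l_i\leq l_d$ for $i>j+1$, so
\[
|Q^\ell|^{1/p_0-1/q_0}\leq (l_{j+2}\cdots l_d)^{1/p_0-1/q_0}\leq l_d^{(d-j-1)(1/p_0-1/q_0)},
\]
and for $(p_0,q_0)$ close enough to the diagonal this exponent is at most $\eps$. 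There is nothing special about $d-j=2$; the same inequality holds for any $d-j\geq 2$. Hence no dyadic decomposition or decoupling-type aggregation is needed---your own limiting procedure already produces the $(l_d/l_{j+1})^\eps$ loss.

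The paper's argument is in fact more streamlined than your endpoint-interpolation outline. Rather than working at the two endpoints $q^*_0$ and $q^*_1$ and interpolating afterward, the paper first applies the Drury--Guo slicing argument from the proof of Lemma~\ref{L:DG} to extract the factor $(l_1\cdots l_j)^{1-2/q}$ and reduce to the case $j=0$. After parabolic rescaling so that $l_1=1$, it invokes Lemma~\ref{L:interstices} directly at the target value of $q$: for any $0<\nu<\theta$ one has $\|\scriptE^\ell_g\|_{L^{p_\nu}\to L^q}\lesssim (l_1^\nu)^{1/p_\nu'-1/q}=1$, where $p_\nu:=\bigl(\tfrac{d-\nu}{d-\nu+2}q\bigr)'$. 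Since $p_\nu\nearrow q$ as $\nu\nearrow\theta$, the final H\"older step
\[
\|\scriptE^\ell_g\|_{L^q\to L^q}\lesssim (l_1\cdots l_d)^{1/p_\nu-1/q}\leq l_d^{(d-1)(1/p_\nu-1/q)}
\]
gives precisely the $\eps$-loss, with $\eps=(d-1)(\tfrac1{p_\nu}-\tfrac1q)$. The key point you are missing is that Lemma~\ref{L:interstices} already supplies, at the fixed target $q$, exponents $(p_\nu,q)$ with $p_\nu<q$ arbitrarily close to the diagonal whose associated bound depends only on the shortest remaining sidelength; this is exactly what eliminates the ``middle'' directions without any appeal to orthogonality.
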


This completes the proof of Theorem~\ref{T:pos} in the range $q\leq\frac{10}3$.  

\begin{proof}
By H\"older's inequality, for $q>p$,
$$
\|\scriptE^\ell_g\|_{L^p \to L^q} \leq (l_1\cdots l_d)^{\frac1q-\frac1p}\|\scriptE^\ell_g\|_{L^q \to L^q},
$$
so it suffices to verify the theorem on the line $q=p$.  By the Drury--Guo dimension reduction argument used in the proof of Lemma~\ref{L:DG}, 
$$
\|\scriptE^\ell_g\|_{L^q \to L^q} \leq (l_1\cdots l_j)^{\frac1{q'}-\frac1q}\sup_{\xi' \in Q^{\ell'}}\|\scriptE^{\ell''}_{g_{\xi'}}\|_{L^q \to L^q}.
$$
This reduces matters to the case $j=0$.  By parabolic rescaling (which we recall leaves Conjecture~\ref{C:rect conj} invariant), it suffices to consider the case when $l_1=1$.  

Write $q=\tfrac{2(d-\theta+1)}{d-\theta}$, with $0 < \theta \leq 1$.  By hypothesis and Lemma~\ref{L:interstices}, for all $0 < \nu < \theta$,
$$
\|\scriptE^\ell_g\|_{L^{p_\nu} \to L^q} \lesssim 1, \qtq{where} p_\nu:=(\tfrac{d-\nu}{d-\nu+2} q)'.
$$
Thus by H\"older's inequality,
$$
\|\scriptE^\ell_g\|_{L^q \to L^q} \lesssim (l_1 \cdots l_d)^{\frac1{p_\nu}-\frac1q} \lesssim l_d^{(d-1)(\frac1{p_\nu}-\frac1q)}.
$$
Setting $\eps:= (d-1)(\tfrac1{p_\nu}-\tfrac1q)$ and sending $\nu\nearrow \theta$ completes the proof.  
\end{proof}

We now turn to the fully unconditional results.  

\begin{lemma} \label{L:q>4}
Theorem~\ref{T:pos} holds in the region $q > 4$.  
\end{lemma}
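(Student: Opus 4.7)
The plan is to reduce to one-dimensional restriction, which is Theorem~\ref{T:elliptic thm} in the case $d=1$ (the Fefferman--Stein/Zygmund theorem) and hence unconditional for $q>4$, and then to use H\"older's inequality on the finite-volume rectangle $Q^\ell$ to sweep out the remaining range $p\geq (q/3)'$ at fixed $q$.

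Set $p_0:=(q/3)'$, so that $q = 3p_0'$ and (since $q>4$) $q>p_0>1$.  One-dimensional restriction gives Conjecture~\ref{C:elliptic conj} in dimension one for this $(p_0,q)$, so I would first apply Lemma~\ref{L:DG} with $j=d-1$ to obtain, unconditionally,
\[
\|\scriptE^\ell_g f\|_{L^q} \lesssim (l_1\cdots l_{d-1})^{1/p_0'-1/q}\|f\|_{L^{p_0}}.
\]
I would then extend to general $p\geq p_0$ at the same $q$ via H\"older on the finite-measure set $Q^\ell$,
\[
\|f\|_{L^{p_0}(Q^\ell)} \leq (l_1\cdots l_d)^{1/p_0-1/p}\|f\|_{L^p(Q^\ell)}.
\]
Combining the two displays and simplifying the exponents using $1/p_0' = 3/q$ (so that $1/p_0 - 1/p = 1-3/q-1/p$, matching the $l_d$ exponent, and $(1/p_0'-1/q)+(1/p_0-1/p) = 1/p'-1/q$, matching the $l_1\cdots l_{d-1}$ exponent) recovers \eqref{E:q > 4}.

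There is no genuine obstacle here: the argument is essentially Drury--Guo slicing down to the one-dimensional case, followed by a bookkeeping H\"older step.  The reason this range can be handled unconditionally, in contrast with the intermediate ranges treated in Lemmas~\ref{L:DG}--\ref{L:q leq p}, is precisely that Lemma~\ref{L:DG} with $j=d-1$ only invokes Conjecture~\ref{C:elliptic conj} in dimension one, which is already a theorem for $q>4$.  The main thing to verify carefully is that $q>4$ is exactly the condition ensuring both $q>p_0>1$ (needed to apply Lemma~\ref{L:DG}) and $p_0 \geq q'$ (not used here, but relevant to compare with the slicing via Hausdorff--Young--Minkowski alternative).
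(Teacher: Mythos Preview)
Your proposal is correct and follows essentially the same route as the paper: apply Lemma~\ref{L:DG} with $j=d-1$ at the endpoint $p_0=(q/3)'$ (which is unconditional since it only invokes one-dimensional restriction), then use H\"older on $Q^\ell$ to extend to $p\geq p_0$, and verify that the resulting exponents reproduce \eqref{E:q > 4}.  The only cosmetic difference is that the paper compresses the two displays into a single chain of inequalities; your exponent bookkeeping ($1/p_0'=3/q$, etc.) is exactly what is needed to see that the right-hand side matches \eqref{E:q > 4}.
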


\begin{proof}
The proof is a direct deduction from Lemma~\ref{L:DG} (applied in the case $q=3p'$) via H\"older's inequality:  
$$
\|\scriptE^\ell_g f\|_q \lesssim (l_1\cdots l_{d-1})^{\frac3{q}-\frac1q}\|f\|_{(\frac q3)'} \lesssim (l_1\cdots l_{d-1})^{\frac3{q}-\frac1q}|Q^\ell|^{1-\frac 3q-\frac1p}\|f\|_p,
$$
and the right hand side equals that of \eqref{E:q > 4}.  
\end{proof}

\begin{lemma}
Theorem~\ref{T:pos} holds in the region $q>\frac{10}3$.  
\end{lemma}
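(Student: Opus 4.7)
The plan is to combine the unconditional result for $q > 4$ (Lemma~\ref{L:q>4}) with the conditional Lemmas~\ref{L:DG}, \ref{L:interstices}, and \ref{L:q leq p}, feeding Theorem~\ref{T:elliptic thm} as the unconditional dimensional input. First I would note that Theorem~\ref{T:elliptic thm} verifies Conjecture~\ref{C:elliptic conj} in every dimension $k \geq 2$ throughout the range $q > \tfrac{2(k+3)}{k+1}$, a bound satisfied by any $q > \tfrac{10}{3}$ provided $k \geq 2$; combined with the classical Fefferman--Stein--Zygmund restriction theorem in dimension $1$ (unconditional for $q > 4$), this supplies the dimensional inputs required by the conditional lemmas.

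I would then split the remaining range $\tfrac{10}{3} < q \leq 4$ into the cases $q > p$ and $q \leq p$. For the $q > p$ portion: applying Lemma~\ref{L:DG} handles the integer-dimension scaling lines $q = \tfrac{d-j+2}{d-j} p'$ with $d-j \geq 2$, since the requisite dim-$(d-j)$ Conjecture~\ref{C:elliptic conj} is available via Theorem~\ref{T:elliptic thm}; Lemma~\ref{L:interstices} then fills in the fractional scaling lines (those with $0 < \theta < 1$) by interpolating between adjacent integer-dimensional scaling lines. For the $q \leq p$ portion: I would invoke Lemma~\ref{L:q leq p} with $j = d-2$, which covers $q \in (3, 4] \supseteq (\tfrac{10}{3}, 4]$ and requires Conjecture~\ref{C:elliptic conj} in dimensions $1$ and $2$, both available by the observation above.

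The main subtlety I anticipate concerns the fractional scaling lines with $d-j-\theta \in (1, 2)$ in the $q > p$ case: because the 1D restriction input used by Lemma~\ref{L:interstices} requires $q > 4$ strictly, the endpoint on the dim-$1$ scaling line cannot itself be placed at $q \leq 4$. The resolution is that the target $(p, q)$ with $q \in (\tfrac{10}{3}, 4]$ plays the role of an intermediate interpolation point: I choose the dim-$2$ endpoint $(p_0, q_0)$ with $q_0 \in (\tfrac{10}{3}, q)$ and the dim-$1$ endpoint $(p_1, q_1)$ with $q_1 > 4$, so that the required dimensional inputs hold at each endpoint and their convex combination in the $(\tfrac{1}{p}, \tfrac{1}{q})$ plane hits the target $(p, q)$. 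Since specifying the two endpoints on their respective one-parameter scaling lines leaves two degrees of freedom to match the two-dimensional target in the $(\tfrac1p, \tfrac1q)$ plane, a short linear-algebraic check shows such endpoints exist for every $(p, q)$ in the range, completing the proof.
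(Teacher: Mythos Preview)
Your proposal has a genuine gap.  The hypothesis of Lemma~\ref{L:q leq p} is validity of the \emph{full} Conjecture~\ref{C:elliptic conj} in dimensions $d-j-1$ and $d-j$, whereas Theorem~\ref{T:elliptic thm} supplies it in dimension $2$ only for $q>\tfrac{10}{3}$ (and on the dimension-$1$ scaling line only for $q>4$).  You cannot simply feed this partial input into Lemma~\ref{L:q leq p}: its proof invokes Lemma~\ref{L:interstices} at targets $(p_\nu,q)$ with $\nu\nearrow\theta$, i.e.\ arbitrarily close to the diagonal $p=q$, and those targets are not reachable from the available endpoints.  Concretely, the set of points obtainable via Lemma~\ref{L:interstices} from a dimension-$2$ endpoint with $q_0>\tfrac{10}{3}$ and a dimension-$1$ endpoint with $q_1>4$ is the region $\tfrac{1}{q}<\tfrac{1}{3p}+\tfrac{1}{6}$ (intersected with the strip between the two scaling lines), and the diagonal lies outside this region for every $q<4$.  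Your ``linear-algebraic check'' would reveal precisely this: the two degrees of freedom are there, but the open constraints $q_0>\tfrac{10}{3}$, $q_1>4$ cut the image down to a proper subregion that misses, for instance, the point $p\approx 3.57$, $q\approx 3.70$.  The same obstruction already appears in your $q>p$ argument for targets with $d-j-\theta\in(1,2)$ near the diagonal.

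The paper handles this by a genuinely different argument.  After the same reduction to $d=2$ (via H\"older and the Drury--Guo mechanism, as in the first part of the proof of Lemma~\ref{L:q leq p}), it abandons the linear-interpolation route altogether and instead runs a Whitney decomposition of $Q^\ell\times Q^\ell$ in the style of Tao--Vargas--Vega, applying Candy's bilinear estimate (Lemma~\ref{L:Candy}) to the separated pieces.  The bilinear input is available throughout $q>\tfrac{10}{3}$ and is exactly what closes the gap that linear interpolation via Lemma~\ref{L:interstices} cannot.
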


\begin{proof}
Let $q>\tfrac{10}3$.  By Lemma~\ref{L:q>4}, we may assume that $q \leq 4$.  By Lemma~\ref{L:DG}, we may assume that $q>2p'$.  By adapting the first part of the proof of Lemma~\ref{L:q leq p} (which reduces the dimension and reduces to the case $q<p$), we may assume that $d=2$ and $q>p$.  By parabolic rescaling, we may assume that $1=l_1 \leq l_2$.  In summary, it remains to prove that when $d=2$, $1=l_1 \leq l_2$, $4 \leq q < \tfrac{10}3$, and $q>\max\{2p',p\}$, we have
$$
\|\scriptE^\ell_g\|_{L^p \to L^q} \lesssim 1.  
$$
The above conditions on $p,q$ imply that $p>2$.  By interpolation with the (known) inequality $\|\scriptE^\ell_g\|_{L^p \to L^q} \lesssim 1$ on the line $q=3p'$, $q>p$, we may assume, in addition, that $\tfrac8q+\tfrac2p > 3$.  Finally, by real interpolation, it suffices to bound $\|\scriptE^\ell_g f_\Omega\|_q$ for $|f_\Omega| \sim \chi_\Omega$, $\Omega$ a finite measure set.

We adapt the argument of Tao--Vargas--Vega \cite{TVV}.  Making a Whitney decomposition,
$$
Q^\ell \times Q^\ell = \bigcup_{N =0}^\infty \bigcup_{\tau \sim \tau' \in \scriptD_N} \tau \times \tau',
$$
where $\scriptD_N$ denotes a finitely overlapping collection of width 1, height $2^N$ rectangles contained in $Q^\ell$ and $\tau \sim \tau'$ if $N=0$ and $\dist(\tau,\tau') \lesssim 1$ or $N>0$ and $\dist(\tau,\tau') \sim 2^N$.  Making a partition of unity, we can write
$$
(\scriptE f_\Omega)^2 = \sum_{N=0}^\infty \sum_{\tau \sim \tau' \in \scriptD_N} \scriptE f_{\Omega \cap \tau} \scriptE f_{\Omega \cap \tau'},
$$
where $|f_{\Omega \cap \tau}|$ is bounded by the characteristic function of $\Omega \cap \tau$.  

Letting $\widetilde\tau := \{(g(\xi),\xi):\xi \in \tau\}$, we see that the convex hulls of the $\widetilde\tau+\widetilde{\tau}'$, with $\tau \sim \tau' \in \scriptD_N$, are finitely overlapping as $\tau,\tau',N$ vary.  Thus by Lemma~6.1 of \cite{TVV},
$$
\|\scriptE f_\Omega\|_q^q \lesssim \sum_{N=0}^\infty \sum_{\tau \sim \tau' \in \scriptD_N} \|\scriptE f_{\Omega \cap\tau}\scriptE f_{\Omega \cap \tau'}\|_{\frac q2}^{\frac q2}.
$$

A volume preserving affine transformation maps the $\{(g(\xi),\xi):\xi \in \tau\}$, $\tau \in \scriptD_0$ to surfaces elliptic over $Q^\1$.  Thus in the case $N=0$, we may apply Cauchy--Schwarz, the  inequality $\|\scriptE^{\1}_g\|_{L^p \to L^q}\lesssim 1$ (which follows immediately from Theorem~\ref{T:elliptic thm} via H\"older's inequality), and H\"older's inequality to see that
\begin{align*}
\sum_{\tau \sim \tau' \in \scriptD_0} \|\scriptE f_{\Omega \cap \tau}\scriptE f_{\Omega \cap \tau'}\|_{\frac q2}^{\frac q2} 
\lesssim  \sum_{\tau \in \scriptD_0} |\Omega \cap \tau|^{\frac qp} 
\lesssim |\Omega|^{\frac qp}.
\end{align*}
It thus remains to bound the terms with $N>0$.  

When $N>0$ and $\tau \sim \tau'$, rescaling Lemma~\ref{L:Candy} implies that 
$$
\|\scriptE f_\tau \scriptE f_{\tau'}\|_{\frac q2} \lesssim 2^{N(2-\frac 8q)}\|f_\tau\|_2\|f_{\tau'}\|_2.
$$
Thus we obtain by following the  argument of \cite{TVV} that
\begin{equation} \label{E:N>1}
\begin{aligned}
&\sum_{N=1}^\infty \sum_{\tau \sim \tau' \in \scriptD_N} \|\scriptE f_{\Omega \cap \tau}\scriptE  f_{\Omega \cap \tau'}\|_{\frac q2}^{\frac q2} 
\lesssim \sum_{N=1}^\infty 2^{N(q-4)} \sum_{\tau \sim \tau' \in \scriptD_N}|\Omega \cap \tau|^{\frac q4}|\Omega \cap \tau'|^{\frac q4}\\
&\qquad  \lesssim \sum_{N=1}^\infty 2^{N(q-4)} \sum_{\tau \in \scriptD_N} |\Omega \cap \tau|^{\frac q2} \lesssim \sum_{N=1}^\infty 2^{N(q-4)}\min\{|\Omega|,2^N\}^{\frac q2-1}|\Omega|;
\end{aligned}
\end{equation} 
here we have used the fact that $|\Omega \cap \tau| \leq \min\{|\Omega|,|\tau|\} = \min\{|\Omega|,2^N\}$, for each $\tau \in \scriptD_N$.  

Our proof now bifurcates into two cases, $|\Omega|\geq 1$ and $|\Omega| \leq 1$.  If $|\Omega| \leq 1$, the right hand side of \eqref{E:N>1} is bounded by
$$
|\Omega|^{\frac q2} \leq |\Omega|^{\frac qp},
$$
since $p \geq 2$.  If $|\Omega| \geq 1$, the right hand side of \eqref{E:N>1} is bounded by
$$
\sum_{N=1}^{\log|\Omega|} 2^{N(\frac{3q}2-5)}|\Omega| + \sum_{N=\log|\Omega|}^\infty 2^{N(q-4)}|\Omega|^{\frac q2} \lesssim |\Omega|^{\frac{3q}2-4} \leq |\Omega|^{\frac qp},
$$
where we have used $\frac 2p+\frac 8q \geq 3$ in the last inequality.  
\end{proof}

\section{The application:  The proof of Theorem~\ref{T:E_beta}}

We turn now to the proof of Theorem~\ref{T:E_beta}, to which we devote the entirety of this section.  By the triangle inequality and the symmetry of changing the sign of any $\xi_i$, it suffices to bound the operator
$$
\scriptE_\beta f(t,x):= \int_{(0,1]^d} e^{i(t,x)(|\xi_1|^{\beta_1}+\cdots+|\xi_d|^{\beta_d},\xi)}f(\xi)\,d\xi.
$$

We begin by making a dyadic decomposition,
$$
(0,1]^d = \bigcup_{k \in \N^d}R^k, \qquad R^k := \{\xi:\xi_i \sim 2^{-2k_i}, \: i=1,\ldots,d\},
$$
which induces the decomposition
$$
\scriptE_\beta = \sum_{k \in \N^d} \scriptE_\beta^k, \qquad \scriptE_\beta^k f(t,x):= \int_{R^k} e^{i(t,x)(|\xi_1|^{\beta_1}+\cdots+|\xi_d|^{\beta_d},\xi)}f(\xi)\,d\xi.
$$
For $\sigma$ a permutation in $S_d$, we define
$$
\scriptK_\beta^\sigma:=\{k \in \N^d : k_{\sigma(1)}\beta_{\sigma(1)} \geq \cdots \geq k_{\sigma(d)}\beta_{\sigma(d)}\}
$$
and $\scriptE^\sigma_\beta := \sum_{k \in \scriptK_\beta^\sigma} \scriptE_\beta^k$.  By the triangle inequality, it suffices to bound each $\scriptE^\sigma_\beta$.  We will do this by first bounding the partial sums
\begin{gather*}
\scriptE_\beta^{\sigma,k_\sigma(1)} := \sum_{k' \in \scriptK_\beta^\sigma(k_{\sigma(1)})} \scriptE_\beta^k,\\
\scriptK_\beta^\sigma(k_{\sigma(1)}) := \{(k_{\sigma(j)}')_{j=2}^d \in \N^{d-1} : k_{\sigma(1)}\beta_{\sigma(1)} \geq k_{\sigma(2)}'\beta_{\sigma(2)} \geq \cdots \geq k_{\sigma(d)}'\beta_{\sigma(d)}\}.
\end{gather*}

We restate Conjecture~\ref{C:rect conj} and Proposition~\ref{P:sharp E_beta}, as they apply to the $\scriptE_\beta^k$.  

\begin{lemma}\label{L:Ek upper}
Let $p,q \in [1,\infty]$, and assume that the conclusions of Conjecture~\ref{C:rect conj} hold for this exponent pair.  Let $k \in \scriptK_\beta^\sigma$. If $q > p$ and $q=\tfrac{d-j-\theta+2}{d-j-\theta} p'$, for some $0 \leq j < d$ and $0 \leq \theta \leq 1$, then
\begin{align} \label{E:Ek upper:q > p}
&\|\scriptE_\beta^k\|_{L^p \to L^q} \lesssim \\\notag
&\left[\bigl(\prod_{m=1}^j 2^{-2k_{\sigma(m)}}\bigr) \bigl(2^{k_{\sigma(j+1)}\beta_{\sigma(j+1)}[(1-\theta)-\frac2{\beta_{\sigma(j+1)}}]}\bigr)\bigl(\prod_{m=j+2}^d 2^{k_{\sigma(m)}\beta_{\sigma(m)}(1-\frac2{\beta_{\sigma(m)}})}\bigr)\right]^{\frac1{p'}-\frac1q}.
\end{align}
Additionally,  
\begin{align}\notag
\|\scriptE_{\beta}^k\|_{L^p \to L^q} \lesssim_{\eps} & \bigl(\prod_{m=1}^j 2^{-2k_{\sigma(m)}(\frac1{p'}-\frac1q)}\bigr)\bigl(2^{k_{\sigma(j+1)}\beta_{\sigma(j+1)}[(1-\theta)(1-\frac2q)-\frac2{\beta_{\sigma(j+1)}}(\frac1{p'}-\frac1q)+\eps]}\bigr)\\ \label{E:Ek upper:q leq p}
& \times \bigl(\prod_{m=j+2}^d 2^{k_{\sigma(m)}\beta_{\sigma(m)}[(1-\frac2q)-\frac2{\beta_{\sigma(m)}}(\frac1{p'}-\frac1q)]}\bigr)2^{-k_{\sigma(d)}\beta_{\sigma(d)}\eps}.
%
%\bigl(\prod_{m=1}^d 2^{-2k_{\sigma(m)}(\frac 1{p'}-\frac 1q)}\bigr)2^{-\theta k_{\sigma(j+1)}\beta_{\sigma(j+1)}(1-\frac 2q)}\\
%&\times \bigl(\prod_{m=j+1}^d 2^{k_{\sigma(m)}\beta_{\sigma(m)}(1-\frac2q)}\bigr)2^{\eps (k_{\sigma(j+1)}\beta_{\sigma(j+1)}-k_{\sigma(d)}\beta_{\sigma(d)})}
\end{align}
for $q = \tfrac{2(d-j-\theta+1)}{d-j-\theta} \leq p$, $0 < \theta \leq 1$, $\eps > 0$, and $j=0,\ldots,d-1$. 
\end{lemma}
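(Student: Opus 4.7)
The plan is to identify $\scriptE_\beta^k$, up to an $O(1)$ subdivision and an affine change of coordinates, with an elliptic rectangular extension operator of the type governed by Conjecture~\ref{C:rect conj}, and then to read off the two claimed bounds by straightforward algebra.

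I would begin by picking $\xi^{(0)}\in R^k$ with $\xi_i^{(0)}\sim 2^{-2k_i}$ and setting $A:=\sqrt 2\,[D^2 g_\beta(\xi^{(0)})]^{-1/2}$, which is diagonal with entries $A_{ii}\sim 2^{k_i(\beta_i-2)}$. Substituting $\xi=\xi^{(0)}+Au$ and absorbing the resulting affine phase into an invertible linear change of the $(t,x)$-coordinates, a tracking of Jacobians (and the identity $\tfrac1{q'}-\tfrac1p=\tfrac1{p'}-\tfrac1q$) yields
$$
\|\scriptE_\beta^k\|_{L^p\to L^q}=|\det A|^{\frac1{p'}-\frac1q}\bigl\|\scriptE_{\tilde g}^{A^{-1}(R^k-\xi^{(0)})}\bigr\|_{L^p\to L^q},
$$
where $\tilde g(u):=g_\beta(\xi^{(0)}+Au)-g_\beta(\xi^{(0)})-\nabla g_\beta(\xi^{(0)})\cdot Au$ satisfies $\tilde g(0)=0$, $\nabla\tilde g(0)=0$, and $D^2\tilde g(0)=2\mathbb I_d$.

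Next I would verify ellipticity of $\tilde g$ over rectangles of scale $\tilde\ell_i\sim 2^{-k_i\beta_i}$. Writing $\tilde h:=\tilde g-|u|^2$, one computes that $D^2\tilde h$ is diagonal with $(i,i)$-entry $2[(1+A_{ii}u_i/\xi_i^{(0)})^{\beta_i-2}-1]$. So long as $|u_i|\leq c_\beta\,2^{-k_i\beta_i}$ with $c_\beta>0$ small enough, the perturbation $|A_{ii}u_i/\xi_i^{(0)}|$ stays at most $c_\beta$, making $D^2\tilde h$ and all of its weighted higher derivatives smaller than the prescribed $\eps_0$ on $Q^{\tilde\ell}$, as required by Definition~\ref{D:elliptic}. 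Since $A^{-1}(R^k-\xi^{(0)})$ fits inside a rectangle of side-lengths $\sim 2^{-k_i\beta_i}$, I would cover it by $O_{\beta,\eps_0}(1)$ smaller rectangles of side-lengths $\tilde\ell_i\sim c_\beta\,2^{-k_i\beta_i}$ and split $\scriptE_\beta^k$ accordingly; the triangle inequality in $L^q$ then reduces matters to bounding $\scriptE_{\tilde g}^{\tilde\ell}$ on a single such piece, at the cost of an admissible multiplicative constant.

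Finally, the assumption $k\in\scriptK_\beta^\sigma$ gives $\tilde\ell_{\sigma(1)}\leq\cdots\leq\tilde\ell_{\sigma(d)}$, so Conjecture~\ref{C:rect conj} applies with the sides already sorted and produces, for $q>p$,
$$
\|\scriptE_{\tilde g}^{\tilde\ell}\|_{L^p\to L^q}\lesssim(\tilde\ell_{\sigma(1)}\cdots\tilde\ell_{\sigma(j)}\tilde\ell_{\sigma(j+1)}^\theta)^{\frac1{p'}-\frac1q},
$$
and for $q\leq p$ the analogous bound with the extra factor $(\tilde\ell_{\sigma(1)}\cdots\tilde\ell_{\sigma(d)})^{\frac1q-\frac1p}(\tilde\ell_{\sigma(d)}/\tilde\ell_{\sigma(j+1)})^\eps$. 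Substituting $\tilde\ell_{\sigma(m)}\sim 2^{-k_{\sigma(m)}\beta_{\sigma(m)}}$ and $|\det A|\sim\prod_m 2^{k_{\sigma(m)}(\beta_{\sigma(m)}-2)}$ and collecting exponents term by term rearranges these into exactly \eqref{E:Ek upper:q > p} and \eqref{E:Ek upper:q leq p}. The main technical obstacle lies in the ellipticity step: on the natural rescaled rectangle $A^{-1}(R^k-\xi^{(0)})$ the normalized Taylor coefficients of $g_\beta$ are only $O(1)$ rather than small, so Definition~\ref{D:elliptic} cannot be satisfied without the additional $\beta$-dependent subdivision above. Everything else is bookkeeping.
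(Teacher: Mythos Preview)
Your proposal is correct and follows essentially the same approach as the paper's proof: a diagonal rescaling that converts $\scriptE_\beta^k$ into an elliptic extension operator over a rectangle with sidelengths $\sim 2^{-k_{\sigma(m)}\beta_{\sigma(m)}}$, followed by an application of Conjecture~\ref{C:rect conj} and bookkeeping. The paper's own proof is a two-sentence sketch that just records the change of variables $\eta_i=2^{(2-\beta_{\sigma(i)})k_{\sigma(i)}}\xi_{\sigma(i)}$ and the resulting rectangle; your version is simply a more explicit expansion of the same idea, and you are right to flag the $O_{\beta,\eps_0}(1)$ subdivision needed to force the ellipticity parameter below the threshold in Definition~\ref{D:elliptic}, a point the paper leaves implicit.
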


The result is true without the loss $2^{\epsilon (k_{\sigma(j+1)}\beta_{\sigma(j+1)}-k_{\sigma(d)}\beta_{\sigma(d)})}$ in the range $p \geq q >4$, but, since this loss is harmless for our application, we have left it in to simplify the statement.  

\begin{proof}
The lemma is proved by introducing coordinates, $\eta_i = 2^{(2-\beta_{\sigma(i)})k_{\sigma(i) }}\xi_{\sigma(i)}$, $\xi \in R^k$, producing the function
$$
g_\beta^k(\eta) = \sum_{i=1}^d 2^{k_{\sigma(i)}(\beta_{\sigma(i)}-2)\beta_{\sigma(i)}}|\eta_i|^{\beta_{\sigma(i)}},
$$
which is elliptic (to arbitrary order) over a rectangle $\{\eta_i \sim 2^{-k_{\sigma(i)}\beta_{\sigma(i)}}\}$.  In the notation of Conjecture~\ref{C:rect conj}, this rectangle is congruent to $Q^\ell$, where 
$$
\ell := \bigl(2^{-k_{\sigma(1)}\beta_{\sigma(1)}},\ldots,2^{-k_{\sigma(d)}\beta_{\sigma(d)}}\bigr).
$$
\end{proof}

\begin{proof}[Proof of Theorem~\ref{T:E_beta}]
We will give the details of the proof only in the case $\beta_i \neq 2$ for all $i$.  In the case that $\beta_i=2$ for some $i$, we may use a Galilean transformation in those coordinates $\xi_i$ with $\beta_i = 2$ to see that 
\begin{equation} \label{E:simplified betai=2}
\|\scriptE_\beta\|_{L^p \to L^q} \lesssim \|{\sum_{k \in \N^d}}' \scriptE_\beta^k\|_{L^p \to L^q},
\end{equation}
where the prime indicates a sum taken over those $k \in \N^d$ with $k_i = 1$, for all $i$ such that $\beta_i=2$.  Since the difficulty in the general case lies in summing over those $k_i$ such that $\beta_i \neq 2$, we will give the complete details only in the case that $\beta_i \neq 2$ for all $i$.  The change needed to handle the general case is just notational.  

For most cases, we will use an interpolation lemma whose hypotheses will necessitate boundedness of $\scriptE_\beta^k$ as an operator from $L^{\tilde p}$ to $L^{\tilde q}$ for $(\tilde p,\tilde q)$ lying in a neighborhood (in $\R^2$) of $(p,q)$.  Thus we begin by dispensing with those cases wherein $(p,q)$ lies on the boundary of the region $T_d$ defined in \eqref{E:def Td}.  

The case $q=\infty$ is elementary: $\scriptE_\beta : L^p \to L^\infty$ for all $p \geq 1$, by H\"older's inequality.  The case $p=\infty>q$ may arise under condition (ii) or (iv), but the claimed bounds for such pairs follow from the claimed bounds with finite $p$ by H\"older's inequality, except possibly for the point $(p,q)=(\infty,1+\frac 1{J_d})$, which can arise under condition (iv).  The case $p<q=\tfrac{d+2}d p'$ is a little more involved.  On the one hand, in the notation of Lemma~\ref{L:Ek upper}, $j+\theta = 0$, so \eqref{E:Ek upper:q > p} reads 
\begin{equation} \label{E:small beta pos}
\|\scriptE_\beta^k\|_{L^p \to L^{\frac{d+2}d p'}} \lesssim \prod_{j=1}^d 2^{-k_{\sigma(i)}(2-\beta_{\sigma(i)})(\frac1{p'}-\frac1q)}.
\end{equation}
On the other hand, from the hypothesis $\beta_i \neq 2$ and the ordering $\beta_1 \geq \cdots \geq \beta_d$, the difference $(J_n+\tfrac{d-n}2) -(J_{n-1}+\tfrac{d-n+1}2) = \tfrac1{\beta_n}-\tfrac12$ is increasing in $n$ and never zero.  Therefore, the case $\tfrac q{p'} = \tfrac{d+2}d = 1+\frac1{J_0+\frac{d-0}2}$ of Condition (i) of the theorem is only possible when $n \mapsto J_n + \tfrac{d-n}2$ has a strict minimum at 0, i.e.\ when $\beta_i<2$ for all $i$.  In this case, all of the exponents in \eqref{E:small beta pos} are negative, and it is elementary to sum.  

We will specifically address the case $(p,q) = (\infty,1+\tfrac1{J_d})$ at the end of the proof, but for now, we may assume that the bounds in Lemma~\ref{L:Ek upper} hold for exponent pairs in a neighborhood (in $\R^2$) of $(p,q)$.  By the reductions above and real interpolation, it suffices to prove that the bounds expressed in the theorem hold for all pairs $q>\max\{\tfrac{d+2}d p', \tfrac{2(d+1)}d\}$ obeying, in addition, one of the conditions (i), (ii and $p<\infty$), or (iv).  

We now complete the argument in the case of (i).  Let $p < q = \tfrac{d-j-\theta+2}{d-j-\theta} p'$, for some $0 \leq j < d$ and $0 \leq \theta < 1$.  We start by proving bounds for the $\scriptE_\beta^{\sigma,k_\sigma(1)}$ for exponent pairs $(p,q)$ lying in a neighborhood of some pair obeying (i); thus we do not yet assume that (i) holds.  

Since $1-\tfrac2{\beta_i} \neq 0$ for all $i$, applying \eqref{E:Ek upper:q > p} and summing a geometric series,
\begin{align*}
&\|\scriptE_\beta^{\sigma,k_{\sigma(1)}}\|_{L^p \to L^q} \leq \sum_{k' \in \scriptK_\beta^\sigma(k_{\sigma(1)})} \|\scriptE_\beta^k\|_{L^p \to L^q}\\
&\qquad \lesssim {\sum_{k_{\sigma(2)}}}' \cdots {\sum_{k_{\sigma(j+1)}}}'\left[\bigl(\prod_{m=1}^j 2^{-2k_{\sigma(m)}}\bigr)2^{k_{\sigma(j+1)}\beta_{\sigma(j+1)}[-\theta+\sum_{i=j+1}^d(1-\frac2{\beta_{\sigma(i)}})_+]}\right]^{\frac1{p'}-\frac1q},
\end{align*}
where the $'$'s indicate sums over $1 \leq k_{\sigma(m+1)} \leq \tfrac{k_{\sigma(m)}\beta_{\sigma(m)}}{\beta_{\sigma(m+1)}}$.  For $0 \leq l < j$, if $-\theta-l+\sum_{i=j+1-l}^d(1-\tfrac2{\beta_{\sigma(i)}})_+ > 0$,
\begin{align*}
&{\sum_{k_{\sigma(j+1-l)}}}'\left[\bigl(\prod_{m=1}^{j-l} 2^{-2k_{\sigma(m)}}\bigr)2^{k_{\sigma(j+1-l)}\beta_{\sigma(j+1-l)}[-\theta-l+\sum_{i=j+1-l}^d(1-\frac2{\beta_{\sigma(i)}})_+]}\right]^{\frac1{p'}-\frac1q}\\
&\qquad \lesssim 
\left[ \bigl(\prod_{m=1}^{j-l-1} 2^{-2k_{\sigma(m)}}\bigr)2^{k_{\sigma(j-l)}\beta_{\sigma(j-l)}[-\theta-l-1+\sum_{i=j-l}^d(1-\frac2{\beta_{\sigma(i)}})_+]}\right]^{\frac1{p'}-\frac1q}.
\end{align*}
By the preceding and a simple induction argument,
\begin{equation} \label{E:E beta sigma k I}
\|\scriptE_\beta^{\sigma,k_{\sigma(1)}}\|_{L^p \to L^q} \lesssim 2^{k_{\sigma(1)}\beta_{\sigma(1)}[-\theta-j+\sum_{i=1}^d(1-\frac2{\beta_i})_+](\frac1{p'}-\frac1q)},
\end{equation}
if $-\theta-l+\sum_{i=j+1-l}^d(1-\tfrac2{\beta_{\sigma(i)}})_+>0$, for all $0 \leq l < j$, while if $-\theta-l+\sum_{i=j+1-l}^d(1-\tfrac2{\beta_{\sigma(i)}})_+ \leq 0$ for some $0 \leq l < j$, which we assume to be the least such $l$,
\begin{equation} \label{E:E beta sigma k II}
\|\scriptE_\beta^{\sigma,k_{\sigma(1)}}\|_{L^p \to L^q} \lesssim {\sum_{k_{\sigma(2)}}}' \cdots {\sum_{k_{\sigma(j-l)}}}' k_{\sigma(j-l)} \prod_{m=1}^{j-l} 2^{-2k_{\sigma(m)}(\frac1{p'}-\frac1q)}.
\end{equation}

Suppose now that condition (i) holds.  We may sum the right side of \eqref{E:E beta sigma k I} in $k_{\sigma(1)}$ whenever $\theta+j>\sum_{i=1}^d(1-\tfrac2{\beta_i})_+$, and the right side of \eqref{E:E beta sigma k II} may be summed in $k_{\sigma(1)}$ unconditionally.  Condition (i) for our $p,q$ is, after a bit of arithmetic, equivalent to $\theta+j \geq \sum_{i=1}^d(1-\tfrac2{\beta_i})_+$.  Thus it remains to consider the case
\begin{equation} \label{E:theta + l ineq}
\theta+l<\sum_{i=j+1-l}^d(1-\tfrac2{\beta_{\sigma(i)}})_+, \: 0 \leq l < j, \qtq{and} \theta+j=\sum_{i=1}^d(1-\tfrac2{\beta_i})_+.
\end{equation}

Let $j_0 \leq j = j_1$ and $0 < \theta_0,\theta_1 < 1$ with $j_0+\theta_0 < j+\theta < j_1+\theta_1$ and $|j+\theta-(j_i+\theta_i)|$ sufficiently small, $i=0,1$.  Then with $p_i':= \tfrac{d-j_i-\theta_i}{d-j_i-\theta_i+2}q$, $q>p_i$, and inequality \eqref{E:Ek upper:q > p} holds at $(p_i,q)$.  Furthermore, since $j_1=j$, \eqref{E:theta + l ineq} implies
$$
\theta_1+l < \sum_{i=j_1+1-l}^d(1-\tfrac2{\beta_{\sigma(i)}})_+, \qquad 0 \leq l < j_1,
$$
provided $|\theta-\theta_1|$ is sufficiently small, and we may argue similarly for $\theta_0$ when $j_0=j$.  If $j_0 < j$, we may assume that $j_0=j-1$, so
$$
\theta_0+l < \theta+l+1 < \sum_{i=j+1-(l+1)}^d(1-\tfrac2{\beta_{\sigma(i)}})_+ = \sum_{i=j_0+1-l}^d(1-\tfrac2{\beta_{\sigma(i)}})_+, \qquad 0 \leq l+1 < j = j_0+1.  
$$
Therefore by \eqref{E:E beta sigma k I}, 
$$
\|\scriptE_\beta^{\sigma,k_{\sigma(1)}}\|_{L^{p_i} \to L^{q}} \lesssim 2^{\alpha_i k_{\sigma(1)}},
$$
where
$$
\alpha_i:=\beta_{\sigma(1)}(-\theta_i-j_i+\sum_{i=1}^d(1-\tfrac2{\beta_i})_+)(\tfrac1{p_i'}-\tfrac1q), \qquad i=0,1.
$$
We observe that $\alpha_0 > 0 > \alpha_1$.  Thus for $ f_\Omega$ comparable to the characteristic function of a measurable set $\Omega$,
\begin{align*}
\|\sum_{k_{\sigma(1)}} \scriptE_\beta^{\sigma,k_{\sigma(1)}} f_\Omega\|_{L^q} 
&\lesssim \sum_{k_{\sigma(1)}} \min\{2^{\alpha_0 k_{\sigma(1)}}|\Omega|^{\frac1{p_0}},2^{\alpha_1 k_{\sigma(1)}}|\Omega|^{\frac1{p_1}}\}\\
&\lesssim |\Omega|^{\frac{\frac{\alpha_0}{p_1}-\frac{\alpha_1}{p_0}}{\alpha_0-\alpha_1}} = |\Omega|^{\frac1p}
\end{align*}
(some arithmetic is needed for the last equality).  This implies the restricted weak type inequality, and so completes the proof of the restricted-weak type inequality in the case $q > p$. Since the Riesz diagram lacks any vertex in the region $q > p$, by real interpolation, the proof is complete for $q > p$.

We now turn to the case $q = \tfrac{2(d-j-\theta+1)}{d-j-\theta} \leq p$.  For any integer $N$, 
$$
\max_{0 \leq n \leq N} n(1-\tfrac2q)-2J_n(\tfrac1{p'}-\tfrac1q) = \sum_{i=1}^N[(1-\tfrac2q)-\tfrac2{\beta_i}(\tfrac{1}{p'}-\tfrac1q)]_+.
$$
Thus (iv) can be rewritten as 
\begin{gather*}
q \leq p \leq \infty,\, \qtq{and}\, \sum_{i=1}^{d-1}[(1-\tfrac2q)-\tfrac2{\beta_i}(\tfrac1{p'}-\tfrac1q)]_+ < d(1-\tfrac2q)-\tfrac2q, \\\qtq{and}\, \sum_{i=1}^d[(1-\tfrac2q)-\tfrac2{\beta_i}(\tfrac1{p'}-\tfrac1q)] = d(1-\tfrac2q)-\tfrac2q.
\end{gather*}
This implies that $(1-\tfrac2q)-\tfrac2{\beta_d}(\tfrac1{p'}-\tfrac1q) > 0$, which, by our assumption that $-\tfrac1{\beta_1} \geq \cdots \geq -\tfrac1{\beta_d}$, further implies (since $p' < q$) that $(1-\tfrac2q)-\tfrac2{\beta_i}(\tfrac1{p'}-\tfrac1q) > 0$, for all $i$.  Collecting these observations, and making similar (but simpler) manipulations in the case of (ii), we may rewrite conditions (ii and $p<\infty$) and (iv) as
$$
\begin{cases}
{\text{(ii')}}\:q\leq p<\infty, \qtq{and} \sum_{i=1}^d[(1-\tfrac2q)-\tfrac2{\beta_i}(\tfrac1{p'}-\tfrac1q)]_+ < d(1-\tfrac2q)-\tfrac2q,\\
{\text{(iv')}}\:q\leq p\leq \infty, \qtq{and} (1-\tfrac2q)-\tfrac2{\beta_i}(\tfrac1{p'}-\tfrac1q) > 0, \qtq{for all $i$, and} \tfrac{1+J_d}q = \tfrac{J_d}{p'}.
\end{cases}
$$

Let us now assume that we are in case (ii').  Then by \eqref{E:Ek upper:q leq p},
\begin{equation} \label{E:interior ii'}
\begin{aligned}
\|\scriptE_\beta^{\sigma,k_{\sigma(1)}}\|_{L^p \to L^q}& \lesssim_\eps 2^{\eps k_{\sigma(1)}\beta_{\sigma(1)}}{\sum_{k_{\sigma(2)}}}' \cdots {\sum_{k_{\sigma(j+1)}}}' \bigl[\bigl(\prod_{m=1}^j 2^{-2k_{\sigma(m)}(\frac1{p'}-\frac1q)}\bigr)\\
& \times 2^{k_{\sigma(j+1)}\beta_{\sigma(j+1)}(-\theta(1-\frac2q)+\sum_{m=j+1}^d[(1-\frac2q)-\frac2{\beta_{\sigma(m)}}(\frac1{p'}-\frac1q)]_+)}\bigr],
\end{aligned}
\end{equation}
where the extra factor in front accounts for the loss in $2^{k_{\sigma(j+1)}\beta_{\sigma(j+1)}}$ coming from \eqref{E:Ek upper:q leq p}.  Mimicking the inductive argument from before, if 
$$
-(\theta+l)(1-\tfrac2q) + \sum_{m=j+1-l}^d [(1-\tfrac2q)-\tfrac2{\beta_{\sigma(m)}}(\tfrac1{p'}-\tfrac1q)]_+ \leq 0,
$$
for any $0 \leq l < j$, the right hand side of \eqref{E:interior ii'} is summable in $k_{\sigma(1)}$.  Also as before, if 
$$
-(\theta+l)(1-\tfrac2q) + \sum_{m=j+1-l}^d [(1-\tfrac2q)-\tfrac2{\beta_{\sigma(m)}}(\tfrac1{p'}-\tfrac1q)]_+ > 0,\qquad 0 \leq l < j,
$$
then we have
\begin{equation} \label{E:interior ii''}
\|\scriptE_\beta^{\sigma,k_{\sigma(1)}}\|_{L^p \to L^q} \lesssim_\eps 2^{k_{\sigma(1)}\beta_{\sigma(1)}(-(\theta+j)(1-\frac2q)+\sum_{m=1}^d[(1-\frac2q)-\frac2{\beta_m}(\frac1{p'}-\frac1q)]_++\eps)}.
\end{equation}
By inserting the definition of $q$ into (ii') and taking $\eps$ sufficiently small, the exponent on the right hand side of \eqref{E:interior ii''} is negative, and so we may again sum in $k_{\sigma(1)}$.  

We will turn in a moment to (iv'), but for now we assume only that $q = \tfrac{2(d-j-\theta+1)}{d-j-\theta} \leq p$ and 
$$
(1-\tfrac2q)-\tfrac2{\beta_i}(\tfrac1{p'}-\tfrac1q) > 0, \qtq{for all} i.  
$$
Taking $\eps < (1-\tfrac2q)-\tfrac2{\beta_i}(\tfrac1{p'}-\tfrac1q)$, $i=1,\ldots,d$, \eqref{E:Ek upper:q leq p} implies
\begin{equation} \label{E:E beta sigma q leq p I}
\begin{aligned}
\|\scriptE_\beta^{\sigma,k_{\sigma(1)}}\|_{L^p \to L^q} \lesssim &{\sum_{k_{\sigma(2)}}}' \cdots {\sum_{k_{\sigma(j+1)}}}' \bigl(\prod_{m=1}^j 2^{-2k_{\sigma(m)}(\frac1{p'}-\frac1q)}\bigr)\\
&\times 2^{k_{\sigma(j+1)}\beta_{\sigma(j+1)}(-\theta(1-\frac2q)+\sum_{m=j+1}^d[(1-\frac2q)-\frac2{\beta_{\sigma(m)}}(\frac1{p'}-\frac1q)])}.  
\end{aligned}
\end{equation}
Again arguing as before, if
\begin{equation} \label{E:iib condition}
-(\theta+l)(1-\tfrac2q)+\sum_{m=j+1-l}^d[(1-\tfrac2q)-\tfrac2{\beta_{\sigma(m)}}(\tfrac1{p'}-\tfrac1q)] > 0, \qquad 0 \leq l < j,
\end{equation}
then \eqref{E:E beta sigma q leq p I} implies
\begin{equation}\label{E:E beta sigma q leq p II}
\|\scriptE_\beta^{\sigma,k_{\sigma(1)}}\|_{L^p \to L^q} \lesssim 2^{k_{\sigma(1)}\beta_{\sigma(1)}(-(\theta+j)(1-\frac2q)+\sum_{m=1}^d[(1-\frac2q)-\frac2{\beta_m}(\frac1{p'}-\frac1q)])},
\end{equation}
while if \eqref{E:iib condition} fails for some $0 \leq l < j$, the right hand side of \eqref{E:E beta sigma q leq p I} may be summed in $k_{\sigma(1)}$.  

Now we assume that (iv') holds.  The equation $\frac{1+J_d}{q}=\frac{J_d}{p'}$ can be rewritten, after a little algebra, as 
$$
-(\theta+j)(1-\tfrac2q)+\sum_{m=1}^d[(1-\tfrac2q)-\tfrac2{\beta_m}(\tfrac1{p'}-\tfrac1q)] = 0.
$$  
Our analysis now breaks into three cases.  If $q < p<\infty$, we choose $q < p_0 < p < p_1$ with $|p-p_i|$ sufficiently small that \eqref{E:iib condition} holds with $(p_i,q)$ in place of $(p,q)$.  For $ f_\Omega$ comparable to a characteristic function,
\begin{align*}
\|\scriptE_\beta^\sigma  f_\Omega\|_{L^q} \lesssim \sum_{k=1}^\infty \min_{i=0,1}\bigl\{2^{k\beta_{\sigma(1)}(-(\theta+j)(1-\frac2q)+\sum_{m=1}^d[(1-\frac2q)-\frac2{\beta_m}(\frac1{p_i'}-\frac1q)])}|\Omega|^{\frac1{p_i}}\bigr\} \lesssim |\Omega|^{\frac1p},
\end{align*}
which implies the restricted weak type inequality that we want.

If $q=p$, Condition (iii) holds, so we must prove a strong type inequality.  On the line $\frac{1+J_d}{q}=\frac{J_d}{p'}$, equation \eqref{E:iib condition} continues to hold for some $q<p$, and condition (i) holds for $q>p$. The strong type inequality follows by real interpolation and estimates already proved.

Finally, if $(p,q)=(\infty,\frac 1{J_d}+1)$, taking $j_0+\theta_0 < j+\theta < j_1+\theta_1$ and $q_i := \tfrac{2(d-j_i-\theta_i+1)}{d-j_i-\theta_i}$, for $|j_i+\theta_i-(j+\theta)|$ sufficiently small \eqref{E:iib condition} holds.  We may rewrite \eqref{E:E beta sigma q leq p II} as
$$
\|\scriptE_\beta^{\sigma,k_{\sigma(1)}}\|_{L^\infty \to L^{q_i}} \lesssim 2^{k_{\sigma(1)}\beta_{\sigma(1)}[(d-\theta_i-j_i-2J_d)(1-\frac2{q_i})-\frac{2J_d}{q_i}]}.
$$
As the exponent on the right is positive for $i=0$ and negative for $i=1$, we see (after some arithmetic) that for $ f_{\Omega_1},  f_{\Omega_2}$ comparable to characteristic functions,
$$
\langle \scriptE_\beta^\sigma  f_{\Omega_1}, f_{\Omega_2}\rangle \lesssim \sum_k \min\{2^{k\beta_{\sigma(1)}[(d-\theta_i-j_i-2J_d)(1-\frac2{q_i})-\frac{2J_d}{q_i}]}|\Omega_2|^{\frac1{q_i'}}\} \lesssim |\Omega_2|^{\frac1{q'}},
$$
which implies the claimed restricted weak type inequality in the remaining case.  
\end{proof}

%\end{proof}

\section{The negative result:  Proof of Proposition~\ref{P:sharp E_beta}}

We use the notation established at the beginning of the previous section.  Rescaling the lower bounds in Theorem~\ref{T:neg} (analogously to the proof of Lemma~\ref{L:Ek upper}) yields the following lower bounds on the $\scriptE_\beta^k$.  

\begin{lemma}\label{L:Ek lower}
Assume that $k_1\beta_1\geq k_2\beta_2\geq\cdots\geq k_d\beta_d$. If $q=\tfrac{d-j-\theta+2}{d-j-\theta} p'>p$, for some $0 \leq j < d$ and $0 \leq \theta \leq 1$, then
\begin{align} \label{E:EK lower:q > p}
&\|\scriptE_{\beta}^k\|_{L^p \to L^q}^{\rm{RWT}} \gtrsim  
\left[\bigl(\prod_{i=1}^j 2^{-2k_{i}}\bigr) \bigl(2^{k_{j+1}\beta_{j+1}[(1-\theta)-\frac2{\beta_{j+1}}]}\bigr)\bigl(\prod_{i=j+2}^d 2^{k_{i}\beta_{i}(1-\frac2{\beta_{i}})}\bigr)\right]^{\frac1{p'}-\frac1q}.
\end{align}
Additionally, if $q = \tfrac{2(d-j-\theta+1)}{d-j-\theta} \leq p$, for some $0 < \theta \leq 1$ and $j=0,\ldots,d-1$, 
\begin{align}\notag
\|\scriptE_{\beta}^k\|_{L^p \to L^q}^{\rm{RWT}} \gtrsim & \bigl(\prod_{i=1}^j 2^{-2k_{i}(\frac1{p'}-\frac1q)}\bigr)\bigl(2^{k_{j+1}\beta_{j+1}[(1-\theta)(1-\frac2q)-\frac2{\beta_{j+1}}(\frac1{p'}-\frac1q)]}\bigr)\\ \label{E:EK lower:q leq p d>1}
& \times \bigl(\prod_{i=j+2}^d 2^{k_{i}\beta_{i}[(1-\frac2q)-\frac2{\beta_{i}}(\frac1{p'}-\frac1q)]}\bigr)\tilde \alpha\bigl(k_{j+1}\beta_{j+1}-k_{d}\beta_{d}\bigr),
\end{align}
for some increasing $\tilde \alpha$ depending on $p,q,d$, satisfying $\tilde \alpha(0)=1$ and $\alpha(r)\rightarrow \infty$ as $r\rightarrow \infty$. 
\end{lemma}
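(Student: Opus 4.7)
The plan is to reverse the rescaling used in the proof of Lemma~\ref{L:Ek upper} and then invoke Proposition~\ref{P:neg}.

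Apply the diagonal substitution $\eta_i = 2^{(2-\beta_i)k_i}\xi_i$ on the Fourier side and the dual dilation $y_i = 2^{(\beta_i-2)k_i}x_i$ on the physical side (with $t$ unchanged). As in the proof of Lemma~\ref{L:Ek upper}, this identifies $\scriptE_\beta^k$ with a scalar multiple of the extension operator associated to
$$
g_\beta^k(\eta) = \sum_{i=1}^d 2^{k_i(\beta_i-2)\beta_i}|\eta_i|^{\beta_i},
$$
which is elliptic over a rectangle congruent to $Q^\ell$ with $\ell = (2^{-k_1\beta_1},\ldots,2^{-k_d\beta_d})$. The hypothesis $k_1\beta_1 \geq \cdots \geq k_d\beta_d$ corresponds to $l_1 \leq \cdots \leq l_d$, matching the ordering assumed in Proposition~\ref{P:neg}. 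Tracking Jacobians, and noting that the RWT norm scales identically to the operator norm under diagonal dilations in frequency and position, gives
$$
\|\scriptE_\beta^k\|_{L^p \to L^q}^{\rm RWT} \gtrsim \Bigl(\prod_{i=1}^d 2^{(\beta_i-2)k_i}\Bigr)^{\frac1{p'}-\frac1q}\|\scriptE_g^\ell\|_{L^p \to L^q}^{\rm RWT},
$$
where $g$ denotes the affine normalization of $g_\beta^k$ required by Definition~\ref{D:elliptic cvx}.

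Now insert Proposition~\ref{P:neg}. In the regime $q = \tfrac{d-j-\theta+2}{d-j-\theta}p' > p$, inequality \eqref{E:q > p lb} yields $\|\scriptE_g^\ell\|^{\rm RWT} \gtrsim (l_1 \cdots l_j l_{j+1}^\theta)^{\frac1{p'} - \frac1q}$. Combining exponents coordinate by coordinate using $l_i = 2^{-k_i\beta_i}$ gives, for $i \leq j$, total exponent $-2k_i(\tfrac1{p'}-\tfrac1q)$; for $i = j+1$, $k_{j+1}\beta_{j+1}[(1-\theta)-\tfrac2{\beta_{j+1}}](\tfrac1{p'}-\tfrac1q)$; and for $i \geq j+2$, $k_i\beta_i(1-\tfrac2{\beta_i})(\tfrac1{p'}-\tfrac1q)$, exactly reproducing \eqref{E:EK lower:q > p}. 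The analogous arithmetic applied to \eqref{E:q leq p d>1} produces \eqref{E:EK lower:q leq p d>1}, with the function $\tilde\alpha(r) := \alpha(2^r)$ inheriting monotonicity and divergence from $\alpha$; note that $l_d/l_{j+1} = 2^{k_{j+1}\beta_{j+1} - k_d\beta_d} \geq 1$ by hypothesis, and the normalization $\tilde\alpha(0)=1$ can be arranged by an admissible multiplicative adjustment.

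The only nontrivial point---the same one already addressed implicitly in the proof of Lemma~\ref{L:Ek upper}---is that the normalization $g$ of $g_\beta^k$ satisfies the derivative bounds \eqref{E:elliptic cvx} with a uniform $\eps_0 > 0$. Because $g_\beta^k$ is separable, the computation reduces to verifying, on each coordinate, that $\ell_i^{\alpha_i}\partial_i^{\alpha_i+2}\tilde h$ is $O_{\beta,\alpha}(1)$ at $\eta_i \sim 1$ (after the normalization places $D^2 g = 2\mathbb{I}$ at the center); the dyadic prefactor $2^{k_i(\beta_i-2)\beta_i}$ cancels exactly against $|\eta_i|^{\beta_i-2-\alpha_i}$ evaluated at $\eta_i \sim 2^{-k_i\beta_i}$ when paired with $\ell_i^{\alpha_i}$, leaving only a $\beta$-dependent constant that can be made smaller than the required $\eps_0$ by an additional harmless dilation (independent of $k$).
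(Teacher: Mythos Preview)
Your proof is correct and follows precisely the route indicated in the paper, which simply states that the lemma follows by ``rescaling the lower bounds in Theorem~\ref{T:neg} (analogously to the proof of Lemma~\ref{L:Ek upper})''; you have spelled out the coordinate change, the Jacobian bookkeeping, and the coordinate-by-coordinate exponent arithmetic that the paper leaves implicit, and your identification $\tilde\alpha(r)=\alpha(2^r)$ is the natural one.
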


\begin{proof}[Proof of Proposition~\ref{P:sharp E_beta}]
Let $(p,q) \in [1,\infty]^2$, and assume that none of the conditions (i-iv) hold.  We may assume that $(p,q) \in T_d$ and $p \neq 1$, $q \neq \infty$.  We may define $j,\theta$, depending on $(p,q)$, such that $q$ can be written in one of the forms given in Lemma~\ref{L:Ek lower}. 

Failure of conditions (i-iv) for $(p,q) \in T_d$ leads to a choice of an integer $n \geq 1$.  Namely, if $q>p$, we choose $1 \leq n\leq d$ such that $\tfrac{q}{p'} < 1+\tfrac1{J_n+\frac{d-n}2}$.  If $q \leq p$, we choose $n=d$ if $\tfrac{1+J_d}q > \tfrac{J_d}{p'}$, and otherwise choose $n<d$ such that $\tfrac{1+J_n+d-n}q \geq \tfrac{J_n}{p'}+\tfrac{d-n}2$.   A bit of arithmetic shows that in any of these cases, $n \geq j+\theta$.  

Let $N>\beta_1$ sufficiently large and define $\tilde k = (\lfloor \tfrac N{\beta_1}\rfloor,\ldots,\lfloor \tfrac N{\beta_n} \rfloor,1\ldots,1)$.

We consider first the case $q>p$. By \eqref{E:EK lower:q > p}
\begin{align*}
\|\scriptE_{\beta}^{\tilde{k}}\|_{L^p \to L^q}^{\rm{RWT}} &\gtrsim \bigg[\big(\prod_{i=1}^j 2^{-k_i\beta_i[\frac 2{\beta_i}]}\big)2^{k_{j+1}\beta_{j+1}[(1-\theta)-\frac 2{\beta_{j+1}}]}\big(\prod_{m=j+2}^d 2^{k_i\beta_i[1-\frac 2{\beta_{i}}]}\big)\bigg]^{\frac1{p'}-\frac1q}
\\
&\approx 2^{-N[\frac 2{\beta_1}+\cdots+\frac 2{\beta_{n}}-(n-j-\theta)](\frac 1{p'}-\frac 1q)}.
\end{align*}
Thus, by choosing $N$ large, we can make this term arbitrarily large if 
$$
2J_n-(n-j-\theta)<0,
$$
which, after a little algebra, is equivalent to
$$
\frac q{p'}<1+\frac{1}{J_{n}+\frac{d-n}{2}}.
$$

Next, we suppose $q\leq p$. By \eqref{E:EK lower:q leq p d>1}
\begin{equation} \label{E:q leq p cex}
\begin{aligned} 
\|\scriptE_{\beta}^{\tilde{k}}\|_{L^p \to L^q}^{\rm{RWT}} \gtrsim &\bigg(\prod_{i=1}^j 2^{-2k_{i}(\frac 1{p'}-\frac 1q)}\bigg)2^{-2k_{j+1}(\frac 1{p'}-\frac 1q)+k_{j+1}\beta_{j+1}(1+\theta)(1-\frac 2q)} 
\\
&\times\bigg(\prod_{i=j+2}^d 2^{k_{i}\beta_{i}[(1-\frac 2q)-\frac 2{\beta_{i}}(\frac 1{p'}-\frac 1q)]}\bigg)\tilde \alpha(\tfrac{k_{j+1}\beta_{j+1}}{k_{d}\beta_{d}}).
\end{aligned}
\end{equation}
Thus, for all $n$ such that $j+\theta\leq n<d$, 
$$
\|\scriptE_{\beta}^{\tilde{k}}\|_{L^p \to L^q}^{\rm{RWT}} \gtrsim 2^{-N[2(\frac 1{\beta_1}+\cdots+\frac 1{\beta_{n}})(\frac 1{p'}-\frac 1q) -(n-j-\theta)(1-\frac 2q)]}\tilde \alpha(N),
$$
which we can make arbitrarily large, for large $N$, if
$$
2J_n(\tfrac 1{p'}-\tfrac 1q) -(n-j-\theta)(1-\tfrac 2q)\leq 0,
$$
which, after a little algebra, is equivalent to
$$
\tfrac{1+J_n+d-n}q\geq \tfrac{J_n}{p'}+\tfrac{d-n}{2}.
$$
In the case where $n=d$, \eqref{E:q leq p cex} becomes
$$
\|\scriptE_{\beta}^{\tilde{k}}\|_{L^p \to L^q}^{\rm{RWT}} \gtrsim 2^{-N[2(\frac 1{\beta_1}+\cdots+\frac 1{\beta_{d}})(\frac 1{p'}-\frac 1q) -(d-j-\theta)(1-\frac 2q)]},
$$
which we can make arbitrarily large, for large $N$, if
$$
\tfrac{1+J_d}q> \tfrac{J_d}{p'}.
$$
\par Lastly, we consider the case where conditions (i-iii) fail, but condition (iv) holds, implying that $q<p$, $\tfrac{1+J_d}q= \tfrac{J_d}{p'}$, and $\beta_i> 2$ for all $i$. 

Let $\vec{k}_m=(\frac{Mm}{\beta_1},...,\frac{Mm}{\beta_d})$, where $M>100\max(\beta_1,...,\beta_d)$, and let $\varphi_{R^{\vec{k}_m}}$ be a Schwartz function supported on $R^{\vec{k}_m}$ and satisfying $0\leq\varphi_{R^{\vec{k}_m}}\leq 1$ and $\int\varphi_{R^{\vec{k}_m}}\approx |R^{\vec{k}_m}|\approx 2^{-2MmJ_d}$. Then $|\scriptE_\beta\varphi_{R^{\vec{k}_m}}|\gtrsim |R^{\vec{k}_m}|$ on some dual rectangle $R^*_{\vec{k}_m}$, of dimensions $2^{\frac{2Mm}{\beta_1}}\times\cdots\times 2^{\frac{2Mm}{\beta_1}}\times 2^{2Mm}$, and decays rapidly away from $R^*_{\vec{k}_m}$.

Define
$f(\xi)=\sum_{m=1}^N e^{i\vec{x}_m\cdot \xi}2^{-\frac{2MmJ_d}{p}}\varphi_{R^{\vec{k}_m}}$, with $\vec{x}_m$ chosen so that the dual rectangles $R^*_{\vec{k}_m}$ are widely separated. Then
$$
||f||_{L^p}\approx \bigg(\sum_{m=1}^N 2^{-2MmJ_d}|R^{\vec{k}_m}|\bigg)^\frac{1}{p}=N^\frac{1}{p},
$$
and
\begin{align*}
||\scriptE_\beta f||_{L^q}&\gtrsim \bigg(\sum_{m=1}^N 2^{\frac{2MmJ_dq}{p}}|R^{\vec{k}_m}|^q|R^*_{\vec{k}_m}| \bigg)^\frac{1}{q}=\bigg(\sum_{m=1}^N 2^{-2MmJ_dq(1-\frac{1}{p})}2^{2Mm(J_d+1)} \bigg)^\frac{1}{q}
\\
&=\bigg(\sum_{m=1}^N 2^{-\frac{2MmJ_dq}{p'}}2^{\frac{2MmJ_dq}{p'}} \bigg)^\frac{1}{q}=N^\frac 1q,
\end{align*}
where in the last line, we used $\tfrac{1+J_d}q= \tfrac{J_d}{p'}$. Therefore,
$||\scriptE_\beta||_{L^p\rightarrow L^q}\gtrsim N^{\frac 1q-\frac 1p}$, which goes to infinity as $N\rightarrow \infty$, since $q<p$. Thus, $\scriptE_\beta$ fails to have a strong type bound.

\end{proof}

\end{document}